\definecolor{dark-blue}{rgb}{0,0,0.6}
\definecolor{Purple}{rgb}{0.2,0,0.25}
\newcommand{\bref}[1]{\textbf{\ref{#1}}} %bold font for any cross reference 
\newcommand{\beqref}[1]{\textbf{(\ref{#1})}} %bold font for any equation number
\theoremstyle{plain} 
\newtheorem{thm}{Theorem}[section]
\newtheorem{lem}[thm]{Lemma}
\newtheorem{defin}[thm]{Definition}
\newtheorem{cor}[thm]{Corollary}
\newtheorem{prop}[thm]{Proposition}
\newtheorem{alg}[thm]{Algorithm}
\theoremstyle{definition}
\newtheorem{remark}[thm]{Remark}
\newtheorem{expl}[thm]{Example}
\newcommand{\wt}{\widetilde}
\newcommand{\dom}{\textnormal{dom}}
\newcommand{\R}{\mathbb{R}}
\newcommand{\N}{\mathbb{N}}
\newcommand{\Res}{\textnormal{Res}}
\newcommand{\cl}{\overline}
\subjclass[2010]{90C31, 47H05, 47J25, 90C30, 49M37, 65K05}
\keywords{Algorithmic scheme, fully Legendre function, inexact, inclusion, maximally monotone operator, protoresolvent, resolvent, well defined, zero}
\begin{document}
\date{August 22, 2017}

\numberwithin{equation}{section}
\title[Inexact resolvent inclusion problems with applications]{Solutions to inexact resolvent inclusion problems  with applications to nonlinear analysis and optimization}
\author{Daniel Reem and Simeon Reich}
\address{Department of Mathematics, The Technion - Israel Institute of Technology, 3200003 Haifa, Israel}
\email{dream@technion.ac.il}
\address{Department of Mathematics, The Technion - Israel Institute of Technology, 3200003 Haifa, Israel}
\email{sreich@technion.ac.il}

\maketitle

% REQUIRED
\begin{abstract}
Many problems in nonlinear analysis and optimization, among them 
variational inequalities and minimization of convex functions, 
can be reduced to finding zeros (namely, roots) of set-valued 
operators. Hence numerous  algorithms have been devised in order 
to achieve this task. A lot of these algorithms are inexact 
in the sense that they allow perturbations to appear during the 
iterative process, and hence they enable one to better deal with 
noise and computational errors, as well as superiorization. 
For many years a certain fundamental question has remained open 
regarding many of these known inexact algorithmic schemes in 
various finite and infinite dimensional settings, namely whether 
there exist sequences satisfying these inexact schemes when 
errors appear. We provide a positive answer to this question. 
Our results also show that various theorems discussing the 
convergence of these inexact schemes have a genuine merit 
beyond the exact case. As a by-product we solve the standard 
and the strongly implicit inexact resolvent inclusion problems, 
introduce a promising class of functions (fully Legendre functions), 
establish continuous dependence (stability) properties of the 
solution of the inexact resolvent inclusion problem and continuity 
properties of the protoresolvent, and generalize the notion of 
strong monotonicity. 
\end{abstract}

%\linenumbers

\section{Introduction}\label{sec:Intro}
\subsection{Background}\label{subsec:Background} 
A central problem which appears in nonlinear analysis and optimization is the 
problem of finding zeros (namely, roots) of (usually nonlinear) operators. 
More precisely, given a finite or infinite-dimensional Banach space $(X,\|\cdot\|)$ 
with a dual $X^*$, and given a set-valued operator $A$ from $X$ to the set $2^{X^*}$ 
of all subsets of $X^*$, the problem of finding a zero of $A$ is the following problem:
\begin{equation}\label{eq:ZeroA}
\textnormal {to find }\,x\in X\,\,\,\textnormal{such that}\,\,\, 0\in Ax. 
\end{equation}
This problem is very useful because many other problems can be reduced to solving it (especially when the operator is maximally monotone), among them finding solutions of minmax problems, complementarity problems, variational inequalities, convex feasibility problems,  equilibrium problems, and minimizing convex functions. For  instance, given a proper, lower semicontinuous and convex function  $F:X\to (-\infty,\infty]$, if $A=\partial F$ (namely, the subdifferential of $F$), then, as is well known, $A$ is maximally monotone \cite[Theorem A]{Rockafellar1970jour} and $0\in Ax$ if and only if $x$ is a  global minimizer of $F$ (see \cite[Theorem 2.5.7, p. 105]{Zalinescu2002book}). As another example of the usefulness of \beqref{eq:ZeroA}, one observes that if $A$ is single-valued, then \beqref{eq:ZeroA} reduces to finding a solution $x\in X$ to the equation $0=Ax$.

As a result of the importance of \beqref{eq:ZeroA},  numerous (proximal-type) algorithms have been devised in order 
to achieve this task. Many of these algorithms are inexact in the sense that they allow perturbations (namely, error terms) to appear during the iterative process. The ability to allow perturbations has several advantages. First, in the implementation of various algorithmic schemes aiming at solving computational problems it is common that errors appear due to noise in the input (for instance, because of inaccurate measurements or noise in the transmission of the measurements), inaccurate computations (such as those resulting from subproblems involving proximal operators or other operators the exact evaluation of which is often too demanding a task), and so on. Inexact algorithms enable one to better deal with such error terms, especially when they are perturbation resilient, namely when they converge to a solution of the problem they aim to solve despite the presence of the perturbations. 
A second advantage of inexact algorithms aiming to solve \beqref{eq:ZeroA} can be found in the recent heuristic optimization methodology called ``superiorization''  \cite{Censor2015surv,CensorSuperiorizationPage,CensorDavidiHerman2010jour,Davidi2010PhD,Herman2014surv}. Here, in contrast to the previous case in which the perturbations themselves appear due to noise or computational errors and hence they are usually unknown to the users (frequently only their magnitude can be estimated), one uses perturbations in an active way in order to obtain partial solutions which have some good  properties. See \cite[Section 4]{ReemDe-Pierro2017jour} for a more comprehensive discussion regarding this methodology, including a significant extension of its scope and an extensive list of related references. 

It turned out that for many years (in some cases about 15--20 years), a certain fundamental question has remained open regarding many of the known inexact algorithmic schemes which were devised in order to solve \beqref{eq:ZeroA} in various finite 
and infinite dimensional settings. The question has been whether these algorithms are well defined. In other words, so far it has not been clear whether there exist any sequences satisfying these inexact schemes when errors appear. This question  is relevant not just to the inexact algorithmic schemes themselves, but also to many convergence results related to them, since on the one hand these convergence results assume that perturbations appear (and then impose some conditions on them, for instance, that they decay to zero), but, on the other hand, it has not been clear why the discussed algorithmic schemes are well defined in the presence of these non-zero perturbations. Hence so far there has been a real doubt regarding the merit of the above-mentioned  convergence results in the inexact case (a case which is expected to occur in real world scenarios).

\subsection{Contributions and paper layout}\label{subsec:Contributions}
After some preliminaries given in Section \bref{sec:Preliminaries}, we discuss in Section \bref{sec:FullyLegendre} the class of fully Legendre functions (and also fill small gaps in the literature). Then we discuss in Section \bref{sec:PrincipleResolvent} the inexact resolvent inclusion problem, a problem which is intimately related to many of the inexact algorithms mentioned above aiming to solve \beqref{eq:ZeroA}. We show that this problem has a unique solution in a rather general setting and, as a matter of fact, we are able to represent this solution in an explicit way (\beqref{eq:y_explicit} below).  This existence and uniqueness result, which is not very complicated, turned out to be very useful in our context. A major use of it (together with other tools such as the ones presented in Sections \bref{sec:Implicit}--\bref{sec:Continuity}: see the next paragraphs for more details) is answering, in a positive way, the many-years-open question mentioned above regarding the well-definedness of numerous (at least 16) known inexact algorithmic schemes in various finite and infinite dimensional settings, namely whether there exist sequences which satisfy these schemes when (some of) the error terms are not equal to zero. This is done in Sections \bref{sec:Eckstein}--\bref{sec:ManyMore} below (as discussed in Section \bref{sec:ManyMore} below,  probably our ideas and results can be applied to additional 14 or more algorithmic schemes). We show  that in some cases arbitrary perturbations are possible and in other cases (in which the algorithmic schemes are defined in a strongly implicit way) sufficiently small  perturbations are allowed. Sometimes we are also able to show that the domain of definition of additional  parameters which appear in some schemes can be extended (Sections \bref{sec:SolodovSvaiter1999-1}--\bref{sec:ParenteLotitoSolodov2008} below).

The results presented in this paper not only show the well-definedness of many existing algorithmic schemes (among them  the ones introduced by Eckstein \cite[Algorithm (10)]{Eckstein1998jour}, Reich-Sabach \cite[Algorithm (4.1)]{ReichSabach2010b-jour},  Solodov-Svaiter \cite[Algorithm 1]{SolodovSvaiter1999-1jour}, Iusem-Pennanen-Svaiter \cite[Method 1, Theorem 3]{IusemPennanenSvaiter2003jour}, and Parente-Lotito-Solodov \cite[Algorithm 3.1]{ParenteLotitoSolodov2008jour}), but they also show that numerous known theorems discussing the convergence of these inexact schemes (under the  assumption of existence of sequences satisfying the schemes) have a genuine merit beyond the exact case. 

Our analysis yields a few byproducts of independent interest. First, we introduce and investigate in Section \bref{sec:FullyLegendre} (see also Remarks \bref{rem:Legendre}--\bref{lem:FormulaConjugate}) the class of fully Legendre functions. This rich class of functions seems to be quite promising. Second, we show in Section \bref{sec:Continuity} that under simple assumptions (in particular,  when the space is finite dimensional: see Example \bref{ex:FiniteDimension}) there is continuous dependence (stability) of the solution of the inexact resolvent inclusion problem on other parameters which appear in the problem. In addition, we show the continuity of the protoresolvent. As a matter of fact, frequently these conditions ensure the H\"older continuity of the protoresolvent (Corollary \bref{cor:ResolventContinuous}, Examples \bref{ex:HilbertContinuous}-\bref{ex:PowerNorm} below), a property which generalizes the well-known 1-Lipschitz continuity (nonexpansivity) of classical resolvents in Hilbert space \cite[Corollary 23.9, p. 396]{BauschkeCombettes2017book}, \cite[Proposition 5.b]{Moreau1965jour}, \cite[Proposition 1(c)]{Rockafellar1976jour}. Third, we present in Section \bref{sec:Implicit} a strongly implicit form of the inexact  resolvent inclusion problem (Proposition \bref{prop:ImplicitInexactness} below). This form of the problem, together with the explicit representation \beqref{eq:y_explicit} of the solution  to the (standard) inexact resolvent inclusion problem and the continuity results mentioned a few lines above, are useful not only for showing that various inexact algorithmic schemes are well defined (such as the ones discussed in Sections \bref{sec:SolodovSvaiter1999-1}--\bref{sec:ParenteLotitoSolodov2008} and many ones  discussed in Section \bref{sec:ManyMore}), but also for devising many more strongly implicit ones. Fourth, in Definition \bref{def:ExtendStronglyMonotone} below we introduce a certain generalization of the notion of strong monotonicity. We conclude the paper in Section \bref{sec:AdditionalRemarks} with a few remarks and open problems. 

The  notation used in this paper is sometimes different from the one used in some of the cited references  because we wanted to have a consistent notation throughout the paper. Nonetheless, the differences are minor and should not lead to any confusion.

\section{Preliminaries}\label{sec:Preliminaries}
We first recall a few basic definitions. In order to make the discussion focused, we will consider the setting of Proposition \bref{prop:PrincipleResolvent} below, although some of the notions and definitions below can be easily generalized to a more general setting, say to functions from a topological vector space to  $(-\infty,\infty]$. Throughout the paper, unless stated otherwise, the notation and assumptions mentioned below will be used. 

Let  $(X,\|\cdot\|)$ be a real finite or infinite dimensional reflexive Banach space and let $(X^*,\|\cdot\|_*)$ be its dual. Let $2^{X^*}$ be the set of all subsets of $X^*$ and let $A:X\to 2^{X^*}$. We regard $A$ as being a set-valued (or ``multivalued'') operator from $X$ to $X^*$, that is, $A(x)$ is a subset of $X^*$ for each $x\in X$. We sometimes use the notation $Ax$ instead of $A(x)$. The effective  domain of $A$ is the set $\dom(A):=\{x\in X:  Ax\neq\emptyset\}$. The range of $A$ is the set $\cup_{x\in X}Ax$. We are interested only in  nontrivial operators $A$, that is, $\dom(A)\neq\emptyset$ (equivalently, the range of $A$ is nonempty). The set-valued  operator $A$ is called monotone if it satisfies the set-valued  monotonicity condition, that is, 
\begin{equation}\label{eq:Monotone}
\langle x_1^*-x_2^*,x_1-x_2\rangle \geq 0,\quad \forall x_1,x_2\in X,\, x_1^*\in Ax_1,\, x_2^*\in Ax_2,
\end{equation}
where $\langle x^*,x\rangle:=x^*(x)$ for all $x\in X$ and $x^*\in X^*$. 
We say that $A$ is maximally monotone  (the term ``maximal monotone'' is also frequently used in the literature) if $A$ satisfies the maximality condition with respect to monotonicity, that is, $A$ is monotone and for every other multivalued  monotone  operator $B$ from $X$ to $X^*$, if  $Ax\subseteq Bx$ for each $x\in X$, then $B=A$. In other words, if $A$ is maximally monotone and its graph $\{(x,x^*): x\in X,\, x^*\in Ax\}$ is contained in the graph of another monotone operator $B$, then $A=B$ (in particular, a maximally monotone operator cannot be trivial since the graph of the trivial operator is contained in the graph of any constant operator $B$, namely $Bx=x^*_0$ for each $x\in X$ where $x^*_0\in X^*$ is fixed). Well-known examples of maximally monotone  operators are subdifferentials of proper lower semicontinuous convex functions defined on a Banach space, the normal cone operator of a closed and convex subset of a Banach space, and continuous positive semi-definite (single-valued) linear operators from a Hilbert space to itself. Many examples, properties and applications of maximally monotone operators can be found in \cite{AuslenderTeboulle2003book, BauschkeCombettes2017book, Borwein2006jour, Brezis1973book, Phelps1993book_prep, Simons2008book}.

For each  $\lambda\in \R$, we denote by $\lambda A$ the set-valued operator from $X$ to $X^*$ defined by  $(\lambda A)(x):=\lambda A(x)$ for each $x\in X$. It is straightforward to  check that if $\lambda>0$, then $\lambda A$ is monotone whenever $A$ is monotone, and $\lambda A$ is maximally monotone whenever $A$ is maximally monotone. The zero set of $A$ is the set  $A^{-1}(0)=\{z\in X: 0\in Az\}$. We say that $A$ is single-valued if for each $x\in X$ the subset  $A(x)$ is nonempty and contains exactly one element from $X^*$; in other words,  $A$ can be regarded as an ordinary function from $X$ to $X^*$ and by abuse of notation we will identify the set $A(x)$ with the unique element that it contains. We denote by $I$ the identity operator, namely the single-valued operator $I:X\to X$ defined by $I(x):=x$ for each $x\in X$. 

The convex conjugate (Fenchel conjugation, Legendre-Fenchel transform, Legendre transform) of a function  $f:X\to(-\infty,\infty]$ is the function $f^*:X^*\to (-\infty,\infty]$ defined by $f^*(x^*):=\sup\{\langle x^*,x\rangle-f(x): x\in X\}$ for all $x^*\in X^*$. The biconjugate (or bidual) of $f$ is defined by $f^{**}(x):=\sup\{\langle x^*,x\rangle-f^*(x^*): x^*\in X^*\}$ for all $x\in X$ (of course, we restrict here our attention to $X\cong X^{**}$; in a non-reflexive  Banach  space the definition involves $X^{**}$). The effective domain of $f$ is the set $\dom(f):=\{x\in X: f(x)<\infty\}$ and $f$ is said to be proper whenever $\dom(f)\neq\emptyset$.  The subdifferential of $f$ at $x\in X$ is the set $\partial f(x):=\{x^*\in X^*: f(x)+\langle x^*,w-x\rangle\leq f(w)\,\,\forall w\in X\}$. We say that $f$ is G\^ateaux differentiable at $x\in X$ whenever it is finite at $x$ and there exists a continuous linear functional $\nabla f(x)\in X^*$ such that 
\begin{equation}\label{eq:Gateaux}
\langle \nabla f(x),y\rangle=\lim_{t\to 0}\frac{f(x+ty)-f(x)}{t},\quad \forall\, y\in X.
\end{equation}
We say that $f$ is Fr\'echet differentiable (or simply differentiable) at $x\in X$ if $f(x)\in \R$ and there exists a continuous linear functional $f'_F(x)\in X^*$ such that for all sufficiently small $h\in X$ 
\begin{equation}\label{eq:Frechet}
 f(x+h)=f(x)+\langle f'_F(x),h\rangle+o(\|h\|).
\end{equation}
It is well known that Fr\'echet differentiability implies G\^ateaux differentiability and  conversely, if the G\^ateaux derivative is continuous at a point, then it is Fr\'echet differentiable there (and in both implications these notions coincide) \cite[pp.  13-14]{AmbrosettiProdi1993book}. It is also well known that when $X$ is finite-dimensional and $f$ is lower semicontinuous, convex and proper, then  $f$ is G\^ateaux differentiable at $x\in\dom(f)$ if and only if it is Fr\'echet differentiable there \cite[Corollary 17.44, p. 306]{BauschkeCombettes2017book}, \cite[Theorem 25.2, p. 244]{Rockafellar1970book}.

Now we discuss the definition of resolvent and protoresolvent. 
\begin{defin}\label{def:Resolvent}
Given a real reflexive Banach space $X$, If $f:X\to \R$ is G\^ateaux differentiable on $X$, then the \emph{resolvent} of $A:X\to 2^{X^*}$ relative to $f$ is the operator $\textnormal{Res}^f_{A}: X\to 2^X$ defined by 
\begin{equation}\label{eq:Res^f_A} 
\textnormal{Res}^f_{A}(x):=(\nabla f+ A)^{-1}(\nabla f(x)),\,\,\forall x\in X,
\end{equation} 
and the \emph{protoresolvent} of $A$ relative to $f$ is the operator $\textnormal{prot}^f_{A}:X^*\to 2^X$ defined by 
\begin{equation}\label{eq:protres^f_A} 
\textnormal{prot}^f_{A}(x^*):=(\nabla f+ A)^{-1}(x^*),\,\,\forall x^*\in X^*.
\end{equation} 
\end{defin}
In Definition \bref{def:Resolvent} (and elsewhere) we use the following conventions. First, given an arbitrary $B:X\to 2^{X^*}$, the inverse of $B$ is the operator $B^{-1}:X^*\to 2^X$ defined by $B^{-1}(w):=\{x\in X:  w\in Bx\}$ for all $w\in X^*$ (namely $w\in B(x)$ if and only if $x\in B^{-1}(w)$). Second, given two subsets $S_1$ and $S_2$ (of either $X$ or $X^*$),  their sum is $S_1+S_2:=\{s_1+s_2: s_1\in S_1, s_2\in S_2\}$ if both of them are nonempty and $S_1+S_2:=\emptyset$ otherwise. In particular, if we identify the singleton $S_1:=\{s_1\}$ with $s_1$, then $s_1+S_2=\{s_1+s_2: s_2\in S_2\}$ holds whenever $S_2\neq\emptyset$. Third, given $A,B:X\to 2^{X^*}$ and $x\in X$, we define  $(A+B)x:=Ax+Bx$ (in particular, $(A+B)x\neq\emptyset$ if and only if $Ax\neq \emptyset$ and $Bx\neq\emptyset$). 

It seems that $\Res^f_{A}$ was introduced by Eckstein \cite{Eckstein1993jour} in finite-dimensional Euclidean spaces $X$ for strictly convex functions (actually Bregman) $f$ defined on closed and convex subsets of the space and for monotone operators $A$, but closely related versions of it had been  discussed before by Kassay \cite{Kassay1985jour} and Ha  \cite{Ha1990jour}. Resolvents relative to special functions had been, of course, well known in the literature much  before  \cite{Eckstein1993jour} in various equivalent forms for the case where $X$ is a real Hilbert space, $A=\partial F$ where $F:X\to(-\infty,\infty]$ is lower semicontinuous proper convex function, and $f=c\|\cdot\|^2$ for some $c>0$ (usually $c=1/2$): see, for example, \cite{BruckReich1977jour,Moreau1962jour,Moreau1965jour,Rockafellar1976jour} among many other papers. In this latter case (namely, when $A=\partial F$) the resolvent $\Res^f_{A}$ is frequently denoted by $\textnormal{prox}_f$. An interesting observation which was essentially made in \cite[p. 210]{Eckstein1993jour}  when $A=\partial F$ for some lower semicontinuous proper convex function $F:X\to(-\infty,\infty]$ is that for each $x\in X$ one has $\Res^f_{A}(x)=\textnormal{argmin}_{z\in X}(F(z)+D_f(z,x))$, where $D_f(z,x)$ is the Bregman distance between $z$ and $x$ (see \beqref{eq:D_f} below). This identity generalizes the well-known identity regarding the connection between resolvents and minimization problems, a connection which appears already in 
\cite[p. 2897]{Moreau1962jour} and \cite[p. 278]{Moreau1965jour} in the classical case where $X$ is a real Hilbert space and $f=c\|\cdot\|^2$ for some $c>0$ (see also \cite[p. 455]{CensorZenios1992jour} and \cite[p. 671]{Teboulle1992jour} for versions of this identity related to Bregman distances and other distances). 

A thorough investigation of the resolvent relative to lower semicontinuous and convex functions $f:X\to (-\infty,\infty]$ defined on a general Banach space $X$ and G\^ateaux differentiable in the interior of their effective  domains was carried out by Bauschke, Borwein and Combettes in \cite{BauschkeBorweinCombettes2003jour}, where $\Res^f_{A}$ was called ``$D$-resolvent''. Generalization of this concept (to $F$-resolvents) and further  developments appear in Bauschke, Wang and Yao \cite{BauschkeWangYao2010inbook}. The terminology  ``the resolvent of $A$ relative to $f$'' and the notation $\Res^f_{A}$ first appeared in a paper of Reich and Sabach  \cite{ReichSabach2009jour}, but a closely related terminology appeared  in  G{\'a}rciga Otero and Iusem \cite[Definition 3]{GarcigaOtero-Iusem2007jour}: ``the resolvent of $A$ with respect to a regularization function $f$''.  

We finish this section by noting that in the special but important case where $(X,\langle \cdot,\cdot\rangle)$ is a real Hilbert space there is, of course, a  slightly modified  version of the definitions and results presented in this paper (for instance, Propositions \bref{prop:PrincipleResolvent} and \bref{prop:ImplicitInexactness} below), since, as usual, we identify $X$ and $X^*$ via the natural correspondence coming from the well-known  Riesz-Fr\'echet representation theorem  \cite[Theorem 5.5, p. 135]{Brezis2011book},  redefine $f^*(x^*):=\sup\{\langle  x^*,x\rangle -f(x): x\in X\}$ for all $x^*\in X$, and for each $x\in X$ we identify $\nabla f(x)\in X^*$ with the vector in $X$ coming from the Riesz-Fr\'echet theorem. 

\section{Fully Legendre functions}\label{sec:FullyLegendre}
In this section we introduce the class of \emph{fully Legendre functions} and present some properties and examples related to them. 
\begin{defin}\label{def:FullyLegendre}
Let $(X,\|\cdot\|)$ be a real reflexive Banach space and let $f:X\to(-\infty,\infty]$. If $f$ is lower semicontinuous, convex and G\^ateaux differentiable (hence finite) on $X$ and if $f^*$ is G\^ateaux differentiable on $X^*$, then $f$ is called {\bf fully Legendre}.
\end{defin}
 The class of fully Legendre functions is quite rich and contains numerous mundane functions. For instance, in addition to the functions presented in Examples \bref{ex:PositiveDefinite}--\bref{ex:PowerRho} below, we present in Remark \bref{rem:FullyLegendreFIniteDim} a certain geometric characterization of fully Legendre functions when the space is finite-dimensional. It turns out that this finite-dimensional characterization is equivalent to saying that $f$ is fully Legendre if and only if it is differentiable over the entire space, strictly convex there and super-coercive (namely, $\lim_{\|x\|\to\infty}f(x)/\|x\|=\infty$). Hence many everyday examples  of convex functions, such as the ones shown in Figures \bref{fig:FullyLegendre1D}-\bref{fig:FullyLegendre2D}, are fully Legendre. In Remark \bref{rem:FullyLegendreFIniteDim} below we also  explain why a fully Legendre function defined on a finite-dimensional space must be a Bregman function. Since Bregman functions have numerous applications in optimization, nonlinear analysis, machine learning, compress sensing and elsewhere (see, for example, \cite{BanerjeeMeruguDhillonGhosh2005jour,BauschkeBorweinCombettes2003jour,Bregman1967jour,CensorLent1981jour,Eckstein1993jour,Reem2012incol,ReichSabach2010jour,Teboulle1992jour,YinOsherGoldfarbDarbon2008jour} and the references therein), this fact   increases further the potential of the class of fully Legendre functions. 

As explained in Remark \bref{rem:Legendre} below, fully Legendre functions are a special case of Legendre functions (a notion which was introduced in \cite[Section 26]{Rockafellar1970book} and was extended and thoroughly investigated in \cite{BauschkeBorweinCombettes2001jour}) in which the effective domain of $f$ is the entire space $X$ and the effective domain of $f^*$ is the entire dual space $X^*$. Hence we feel that the terminology ``fully Legendre'' is appropriate. There is, of course, a symmetry between $f$ and $f^*$ in Definition  \bref{def:FullyLegendre} because it is well known \cite[p. 11]{Brezis2011book} that $f^*$ is always convex and lower semicontinuous on $X^*$. This symmetry between the properties of a fully Legendre function and its conjugate is typical: for instance, both of them are strictly convex and their gradients are locally bounded (Remark \bref{rem:Legendre} below). 

%%%%%%%%%%%%%%%%%%%%%%%%%%%%%%%%%%%%%%%%%%%%%%%%%%%%%%%%%%%%%%%%%%%%
\begin{figure}[t]
\begin{minipage}[t]{0.49\textwidth}
\begin{center}{\includegraphics[clip, scale=0.56]{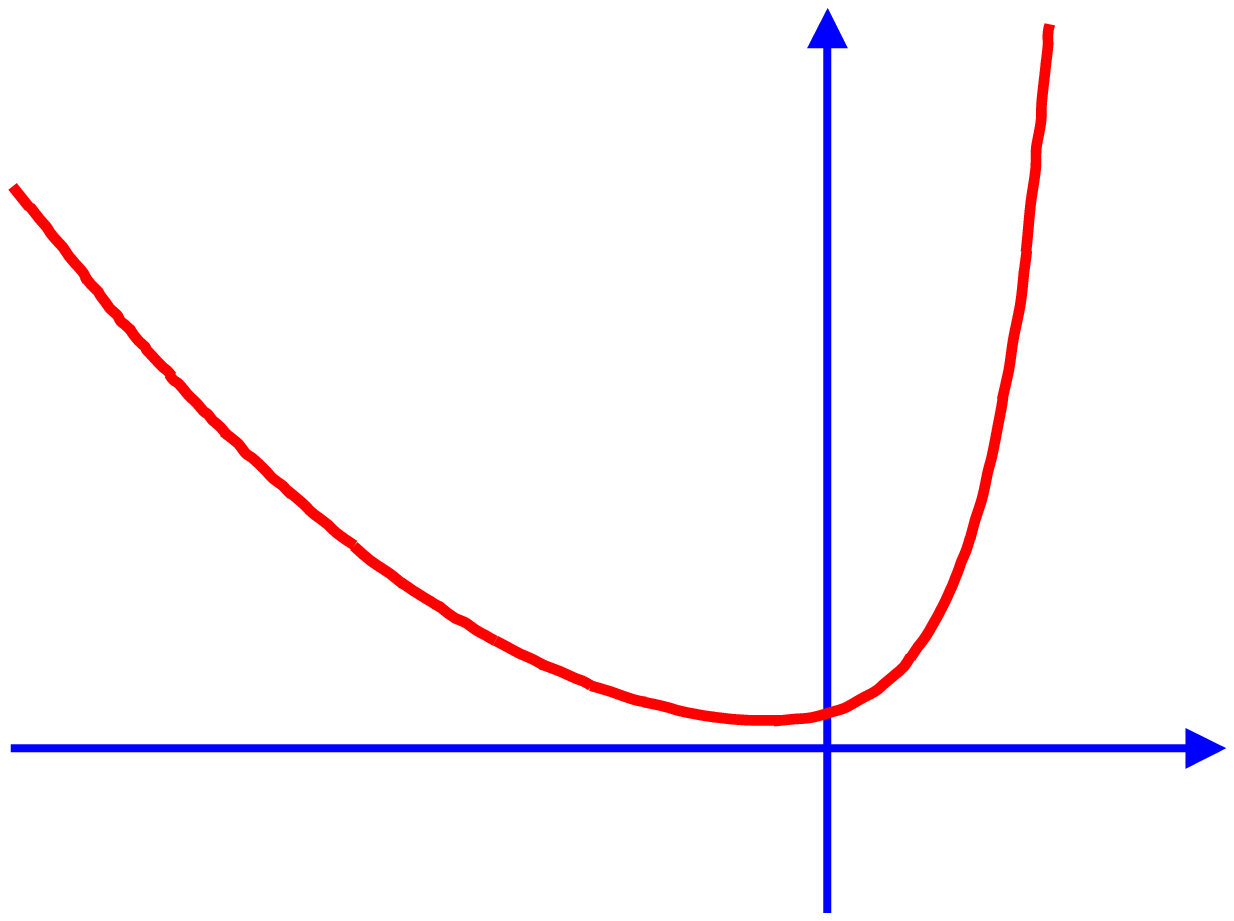}}%{alg40.eps}}
\end{center}
 \caption{The graph of a mundane one-dimensional fully Legendre function, based on the characterization  mentioned in Remark \bref{rem:FullyLegendreFIniteDim}.}
\label{fig:FullyLegendre1D}
\end{minipage}
\hfill
\begin{minipage}[t]{0.49\textwidth}
\begin{center}
{\includegraphics[scale=0.41]{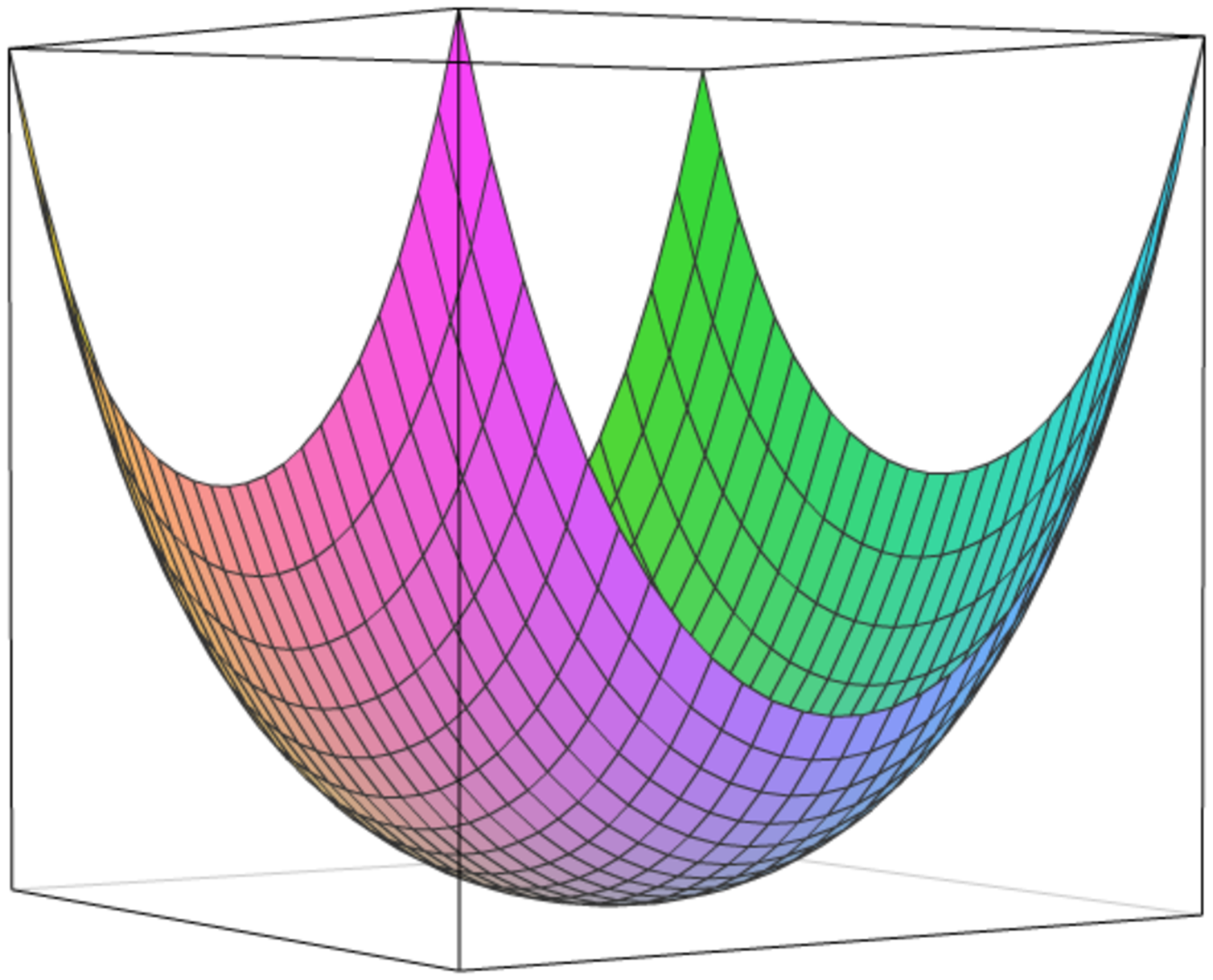}}
\end{center}
 \caption{The graph of a mundane two-dimensional fully Legendre function, based on the characterization  mentioned in Remark \bref{rem:FullyLegendreFIniteDim}.}
\label{fig:FullyLegendre2D}
\end{minipage}
\end{figure}
%%%%%%%%%%%%%%%%%%%%%%%%%%%%%%%%%%%%%%%%%%%%%%%%%%%%%%%%%%%%%%%%%%%%

Here are a few simple examples of fully Legendre functions. Additional examples can be found in \cite[Sections 6, 7]{BauschkeBorweinCombettes2001jour}. We note that using the results mentioned in \cite{BauschkeBorwein1997jour} (e.g., Proposition  5.1, Theorem 5.12, Subsection 7.2), many new examples of fully Legendre functions can be constructed from old ones in the finite-dimensional case.  
\begin{expl}\label{ex:PositiveDefinite}
Let $(X,\langle\cdot,\cdot\rangle)$ be a real Hilbert space and $f(x):=\frac{1}{2}\langle Ax,x\rangle$ for every $x\in X$, where $A:X\to X$ is a continuous, invertible, positive semidefinite and symmetric linear operator. In this case elementary calculations show that $f^*(u)=\frac{1}{2}\langle A^{-1}u,u\rangle$ for each $u\in X$ and we have $\nabla f=A$, $\nabla f^*=A^{-1}$. 
\end{expl}
\begin{expl}\label{ex:cosh}
Let $(X,\|\cdot\|)$ be the finite-dimensional Euclidean space $\R^m$, $m\in\N$ and let $f(x):=\sum_{i=1}^m \cosh(x_i)$, $x=(x_i)_{i=1}^m\in X$. Then, as follows from \cite[p. 50]{BorweinLewis2006book} and an elementary calculation, one has  $f^*(u)=\sum_{i=1}^m \left(u_i\sinh^{-1}(u_i)-\sqrt{1+u_i^2}\right)$, $u=(u_i)_{i=1}^m\in X$. Of course, differentiability of both $f$ and $f^*$ follows from the differentiability of the hyperbolic trigonometric functions and their inverses. 
\end{expl}
\begin{expl}
Suppose that $X=\R^m$ for some $m\in\N$ and $f:X\to \R$ 
is twice continuously differentiable and its Hessian $f''$ 
is positive definite at each point. For each $x\in X$ 
let $f_2(x):=\inf\{\langle f''(x)w,w\rangle: w\in X,  \|w\|=1\}$.  
We claim that if $f''$ satisfies an asymptotically slow decay condition in the sense that there exist $\rho\in (0,1)$, $r>0$ and $\beta>0$ such that 
$f_2(x)\geq \beta/\|x\|^{\rho}$ for all $x\in X$ satisfying 
$\|x\|\geq r$, then $f$ is fully Legendre. In particular (by taking $r=1$ and any $\rho\in (0,1)$), 
if $f:X\to\R$ is twice continuously differentiable and 
its Hessian is strongly positive definite in the sense 
that for some $\beta>0$ we have $\inf_{x\in X}f_2(x)\geq \beta$, 
then $f$ is fully Legendre.  

Before proving the assertion, here are two illustrations of functions which satisfy the above-mentioned conditions. First, let $X=\R$ and let $f:X\to\R$ be defined by $f(x):=x^{1.5}-(3/4)x+(1/8)$  when $x\geq 1$ and $f(x):=3x^2/8$ when $x\leq 1$. Then $f$ is twice continuously differentiable on $X$. In addition, for all $x\in X$ satisfying $x\geq 1$ the Hessian of $f$ at $x$ is  $(3/4)x^{-0.5}$ and for $x\leq 1$ the Hessian is $3/4$. Hence the Hessian of $f$ is positive definite and $f_2(x)=\inf\{\langle f''(x)w,w\rangle: |w|=1\}=(3/4)x^{-0.5}$ whenever $x\geq 1$ and $f_2(x)=3/4$ when $x\leq 1$, namely the Hessian of $f$ satisfies the asymptotically slow decay condition with $\rho=0.5$, $r=1$, and $\beta=3/4$. As an illustration of the strongly positive Hessian condition, let $f(x):=\sum_{i=1}^m \cosh(x_i)$, $x=(x_i)_{i=1}^m\in X=\R^m$, $m\in\N$ be as in Example \bref{ex:cosh}. For all $x\in X$ the Hessian of $f$ at $x$ is the diagonal matrix the entries of which are $\cosh(x_i)$, $i\in \{1,\ldots,m\}$. Hence $\langle f''(x)w,w\rangle=\sum_{i=1}^m\cosh(x_i)w_i^2\geq \sum_{i=1}^m w_i^2=1$ for every $x\in X$ and $w\in X$ satisfying $\|w\|=1$, since $\cosh(t)\geq 1$ for all $t\in \R$.

Now we prove the assertion. Since $f''(x)$ is positive definite at each $x\in X$, a  
well-known result ensures that $f$ is strictly convex \cite[Theorem 4.3.1(ii), p. 115]{Hiriart-UrrutyLemarechal2001book}. Now, since $f$ is obviously differentiable on $X$, in order to 
see that $f$ is fully Legendre it remains to show, according 
to the characterization of finite-dimensional fully Legendre 
functions (Remark \bref{rem:FullyLegendreFIniteDim} below), that $f$ is super-coercive. 
Fix some $0\neq x\in X$. The Taylor expansion of $f$ of order 1 
about 0 with a remainder in Lagrange's form ensures that for 
some $y_x$ located strictly inside the line segment which connects 0 and $x$ 
we have 
\begin{multline}\label{eq:Taylor0}
f(x)=f(0)+\langle\nabla f(0),x\rangle+\frac{1}{2}\langle f''(y_x)x,x\rangle\\
=\|x\|^2\frac{1}{2}\left\langle f''(y_x)\frac{x}{\|x\|},\frac{x}{\|x\|}\right\rangle+f(0)+\langle\nabla f(0),x\rangle.
\end{multline} 
Hence, if $\|x\|\geq r$, then \beqref{eq:Taylor0}, our assumption on $f_2(x)$,  the Cauchy-Schwarz inequality, the fact that 
$0<\|y_x\|<\|x\|$, and the fact that $\rho\in (0,1)$, all imply that indeed $f$ is super-coercive:
\begin{multline*}
\frac{f(x)}{\|x\|}\geq \frac{0.5\|x\|^2f_2(y_x)-|f(0)|-|\langle\nabla f(0),x\rangle|}{\|x\|}\geq 
\|x\|\frac{0.5\beta}{\|y_x\|^{\rho}}-\frac{|f(0)|}{\|x\|}-\frac{\|\nabla f(0)\|\|x\|}{\|x\|}\\
\geq \frac{\beta}{2}\|x\|^{1-\rho}\left(\frac{\|x\|}{\|y_x\|}\right)^{\rho}-\frac{|f(0)|}{\|x\|}-\|\nabla f(0)\|
\geq \frac{\beta}{2}\|x\|^{1-\rho}-\frac{|f(0)|}{\|x\|}-\|\nabla f(0)\|\xrightarrow[\|x\|\to\infty]{}\infty.
\end{multline*}
\end{expl}

\begin{expl}\label{ex:PowerRho}
Suppose that $(X,\|\cdot\|)$ is a smooth and strictly convex (rotund) real Banach space and let 
 $f(x):=(1/\rho)\|x\|^{\rho}$ for a fixed $\rho>1$ and for all $x\in X$. Then, as is well known and follows from elementary calculations, $f^*(u)=(1/\rho^*)\|u\|_*^{\rho^*}$ for all $u\in X^*$,  where $\rho^*>1$ is the dual of $\rho$, namely $(1/\rho)+(1/\rho^*)=1$. It follows from \cite[Lemma 6.2]{BauschkeBorweinCombettes2001jour} that $f$ is fully Legendre and smooth. In fact, if, in addition, $(X,\|\cdot\|)$ is uniformly convex and uniformly smooth, then $f$ is uniformly convex on closed balls and totally convex \cite[Example 6.5]{BauschkeBorweinCombettes2001jour}.
\end{expl}

The following lemma (Lemma \bref{lem:f^*f^-1}  below) is fundamental and various versions of it are well known (e.g., a certain finite-dimensional version \cite[Theorem 26.5, p. 258]{Rockafellar1970book}). It is called the ``Legendre identity'' in \cite{Polyak2016inbook}. Before presenting its proof, we want to say a few words regarding its importance in the context of our paper. First, Lemma \bref{lem:f^*f^-1} plays an essential role in the proof Lemma \bref{lem:ResolventOrdinaryFunction}; this latter lemma is  essential to the proof of Proposition \bref{prop:PrincipleResolvent}, which by itself is essential to the proof of Proposition \bref{prop:ImplicitInexactness}; both Proposition \bref{prop:PrincipleResolvent} and Proposition \bref{prop:ImplicitInexactness} are essential for proving that many known inexact algorithms aiming at finding zeros of set-valued operators are well defined, as shown in Sections \bref{sec:Eckstein}--\bref{sec:ManyMore} below. Second, in many of the above-mentioned inexact algorithms, among them \cite[Algorithm IPPM: Inexact Proximal Point Method, p. 234]{BurachikIusem2008book}, \cite[Inexact Proximal 
Point-Extragradient Method (pp. 75--76)]{Garciga-OteroIusem2004jour},  
\cite[Algorithms II, PI, PII]{IusemGarciga-Otero2002jour}, \cite[Algorithms I, II, IV
]{IusemOtero2001jour}, \cite[Algorithm 1]{SolodovSvaiter2000jour}, it is either  explicitly or implicitly assumed that $\nabla f$ is invertible (even if one restricts the attention to exact algorithms), and this property is needed in the definition of the algorithms themselves; however, either very few sufficient conditions or no ones at all were given in the corresponding works regarding how to find such a function $f$ (which should satisfy additional properties), and it seems that fully Legendre functions are good candidates to be used in these schemes. 
\begin{lem}\label{lem:f^*f^-1} 
If $(X,\|\cdot\|)$ is a real reflexive Banach space and $f:X\to\R$ is fully Legendre, then $\nabla f$ is invertible and 
\begin{equation}\label{eq:f^*f^-1}
\nabla f^*=(\nabla f)^{-1}.
\end{equation}
\end{lem}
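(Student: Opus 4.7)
The plan is to derive \beqref{eq:f^*f^-1} from the Fenchel--Young equality together with the full G\^ateaux differentiability assumptions built into Definition \bref{def:FullyLegendre}. First I would note that since $f$ is finite (hence proper), lower semicontinuous and convex on the reflexive Banach space $X$, the Fenchel--Moreau theorem gives $f^{**}=f$ under the canonical identification $X\cong X^{**}$, and in particular $\partial f^{**}=\partial f$.

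Next I would invoke two standard facts: (i) when a proper convex function is G\^ateaux differentiable at a point, its subdifferential there is the singleton consisting of the gradient at that point; (ii) the Fenchel--Young inequality $f(x)+f^*(x^*)\geq\langle x^*,x\rangle$ holds with equality if and only if $x^*\in\partial f(x)$, and by conjugate duality (using $f=f^{**}$) equivalently if and only if $x\in\partial f^*(x^*)$. Since by Definition \bref{def:FullyLegendre} both $f$ and $f^*$ are G\^ateaux differentiable everywhere on $X$ and $X^*$ respectively, (i) applies to each of them at every point of its domain, so (ii) distills to the clean equivalence
\[
x^*=\nabla f(x)\quad\Longleftrightarrow\quad x=\nabla f^*(x^*),\qquad x\in X,\ x^*\in X^*.
\]

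From this equivalence the lemma is immediate. To see that $\nabla f$ is surjective, fix any $x^*\in X^*$ and set $x:=\nabla f^*(x^*)$; the right-hand side of the equivalence holds trivially, hence so does the left, giving $x^*=\nabla f(x)$. To see that $\nabla f$ is injective, suppose $\nabla f(x_1)=\nabla f(x_2)=x^*$; then the left-hand side holds for both $x_1$ and $x_2$, so both equal $\nabla f^*(x^*)$. Hence $\nabla f:X\to X^*$ is a bijection and $(\nabla f)^{-1}=\nabla f^*$, which is precisely \beqref{eq:f^*f^-1}.

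I do not expect a real obstacle. The only delicate points worth double-checking are that the infinite-dimensional Fenchel--Young equivalence $x^*\in\partial f(x)\Leftrightarrow x\in\partial f^*(x^*)$ requires $f=f^{**}$ (which is exactly why the definition of fully Legendre imposes reflexivity and lower semicontinuity), and that the ``convex plus G\^ateaux differentiable implies singleton subdifferential'' fact must be applied not only to $f$ on $X$ but also to $f^*$ on all of $X^*$, which is legitimate precisely because Definition \bref{def:FullyLegendre} demands G\^ateaux differentiability of $f^*$ on the entire dual space.
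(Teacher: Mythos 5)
Your proposal is correct and follows essentially the same route as the paper: both establish $f^{**}=f$ via reflexivity, invoke the subdifferential inversion $x^*\in\partial f(x)\Leftrightarrow x\in\partial f^*(x^*)$ (which you derive from Fenchel--Young equality and the paper cites directly), and then use the fact that G\^ateaux differentiability of a convex function forces the subdifferential to be the singleton $\{\nabla f\}$, applied to both $f$ and $f^*$. The only difference is cosmetic: you unpack the Fenchel--Young argument that the paper delegates to a reference.
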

\begin{proof}
Since $X$ is reflexive and  because  $f$ is lower semicontinuous, proper (actually finite) and convex function as a fully Legendre function, it follows from \cite[pp. 13 and 67]{Brezis2011book} that $f^{**}=f$. Therefore it can be concluded from \cite[p. 211]{Rockafellar1970jour} or \cite[p. 83]{BonnansShapiro2000book} that for each  $x\in X$ and $x^*\in X^*$ one has $x^*\in \partial f(x)$ if and only if $x\in \partial f^*(x^*)$. Since the definition of the inverse operator implies that $x^*\in \partial f(x)$ if and only if $x\in (\partial f)^{-1}(x^*)$, one has $(\partial f)^{-1}=\partial f^*$. However, since \cite[Theorem 5.37, p. 77]{VanTiel1984book} implies that the subgradient of a G\^ateaux differentiable convex function coincides with  the singleton containing the gradient of the function and because both $f$ and $f^*$ are G\^ateaux differentiable on $X$ and $X^*$ respectively, the above discussion implies that $\nabla f:X\to X^*$ is invertible and \beqref{eq:f^*f^-1} holds, as claimed.
\end{proof}

The next corollary is nothing but a simple observation. We mention it because there is another notion of resolvent, called ``the conjugate resolvent'' \cite[Definition  5.1]{Martin-MarquezReichSabach2012jour}: this is the operator  $\textnormal{CRes}^f_A:X^*\to 2^{X^*}$  defined by $\textnormal{CRes}^f_A:=(I+A\circ \nabla f^*)^{-1}$. When $f$ is fully Legendre, then we can use Lemma \bref{lem:f^*f^-1} to conclude that  $\textnormal{CRes}^f_A=((\nabla f+A)\circ \nabla f^*)^{-1}$, and hence Corollary \bref{cor:ReolventComposition} below implies that the conjugate resolvent is a certain permutation of the resolvent. 
\begin{cor}\label{cor:ReolventComposition}
Under the assumptions of Lemma \bref{lem:f^*f^-1} we have
\begin{equation}\label{eq:ResInverse}
\Res^f_{A}=(\nabla f+ A)^{-1}\circ(\nabla f^*)^{-1}=\left(\nabla f^*\circ(\nabla f+ A)\right)^{-1}.
\end{equation}
\end{cor}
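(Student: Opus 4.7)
The plan is to verify the two claimed equalities in succession, using nothing more than the definition of the resolvent and the identity $\nabla f^{*}=(\nabla f)^{-1}$ supplied by Lemma \ref{lem:f^*f^-1}.

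First I would establish the left-hand equality. By Lemma \ref{lem:f^*f^-1}, $\nabla f^{*}:X^{*}\to X$ is the inverse of $\nabla f:X\to X^{*}$, so $(\nabla f^{*})^{-1}=\nabla f$. Plugging this into the definition \eqref{eq:Res^f_A} gives, for every $x\in X$,
\begin{equation*}
\Res^f_{A}(x)=(\nabla f+A)^{-1}(\nabla f(x))=(\nabla f+A)^{-1}\bigl((\nabla f^{*})^{-1}(x)\bigr)=\bigl((\nabla f+A)^{-1}\circ(\nabla f^{*})^{-1}\bigr)(x).
\end{equation*}

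Next I would verify the right-hand equality, which is purely formal: for any (possibly set-valued) operators $g$ and $h$ between appropriate spaces one has $(g\circ h)^{-1}=h^{-1}\circ g^{-1}$. Indeed, using the convention spelled out after Definition \ref{def:Resolvent} (namely $w\in B(x)\Leftrightarrow x\in B^{-1}(w)$) together with the definition of composition for set-valued maps, $y\in(g\circ h)^{-1}(z)$ is equivalent to the existence of some $w$ with $w\in h(y)$ and $z\in g(w)$, which in turn is equivalent to the existence of $w\in g^{-1}(z)$ with $y\in h^{-1}(w)$, i.e.\ $y\in(h^{-1}\circ g^{-1})(z)$. Applying this with $g:=\nabla f^{*}$ and $h:=\nabla f+A$ and noting that $(\nabla f^{*})^{-1}=\nabla f$ is genuinely the inverse gives
\begin{equation*}
\bigl(\nabla f^{*}\circ(\nabla f+A)\bigr)^{-1}=(\nabla f+A)^{-1}\circ(\nabla f^{*})^{-1},
\end{equation*}
which matches the expression obtained in the previous paragraph.

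There is no real obstacle here; the only thing requiring any care is keeping the set-valued inverse notation straight when writing $(g\circ h)^{-1}=h^{-1}\circ g^{-1}$, since $\nabla f+A$ is set-valued even though $\nabla f^{*}$ is single-valued. Once that bookkeeping is done, the corollary follows by pasting the two equalities together.
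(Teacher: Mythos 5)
Your proof is correct: the paper states this corollary without proof (calling it ``nothing but a simple observation''), and the argument it implicitly has in mind is exactly what you wrote, namely substituting $(\nabla f^*)^{-1}=\nabla f$ from Lemma \bref{lem:f^*f^-1} into \beqref{eq:Res^f_A} and then invoking the set-valued identity $(g\circ h)^{-1}=h^{-1}\circ g^{-1}$. Your careful verification of that identity for set-valued compositions is the only detail the paper leaves unstated, and you handle it correctly.
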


The following assertion describes a fundamental property of the resolvent and the protoresolvent (Definition \bref{def:Resolvent} above). Part \beqref{item:SingleValued} of it is implicit in \cite{BauschkeBorweinCombettes2003jour, BauschkeWangYao2010inbook}  and was mentioned in \cite{ReichSabach2009jour,ReichSabach2010b-jour} without a proof.  For the sake of completeness, we do present the proof below, but before presenting it we note that given a set-valued operator $B:X\to 2^Y$ between two nonempty sets $X$ and $Y$, a necessary and sufficient condition for its inverse $B^{-1}$ to be single-valued is that $\cup_{x\in X}Bx=Y$ and $B(x)\cap B(x')=\emptyset$ for all $x,x'\in X$ satisfying $x\neq x'$.

\begin{lem}\label{lem:ResolventOrdinaryFunction}
Let $(X,\|\cdot\|)$ be a real reflexive Banach space, let $A:X\to 2^{X^*}$ be maximally monotone, $f:X\to\R$ be fully Legendre, and let $\lambda>0$. Then 
\begin{enumerate}[(i)]
\item\label{item:SingleValued} $\textnormal{Res}^f_{\lambda A}$ and $(\nabla f+\lambda A)^{-1}$ are  single-valued. In particular,  $\dom(\textnormal{Res}^f_{\lambda A})=X$ and $\dom((\nabla f+\lambda A)^{-1})=X^*$. 
\item\label{item:MaximallyMonotone}  $(\nabla f +\lambda A)^{-1}$ is maximally monotone. 
\end{enumerate}
\end{lem}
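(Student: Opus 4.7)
The plan is to establish part \beqref{item:SingleValued} via strict monotonicity together with a Rockafellar-type surjectivity argument, and to deduce part \beqref{item:MaximallyMonotone} from the symmetry of maximal monotonicity under taking inverses.

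First I would establish that $\nabla f$ is strictly monotone. Being fully Legendre, $f$ is Legendre in the sense of \cite{BauschkeBorweinCombettes2001jour} with $\dom f=X$ and $\dom f^*=X^*$, and hence essentially strictly convex, which in the present setting means strictly convex on all of $X$. Therefore $\langle\nabla f(x_1)-\nabla f(x_2),x_1-x_2\rangle>0$ whenever $x_1\neq x_2$, and adding the nonnegative contribution from the monotonicity of $\lambda A$ shows that $\nabla f+\lambda A$ is strictly monotone. Single-valuedness of $(\nabla f+\lambda A)^{-1}$ is then a one-line graph argument: if $x^*\in(\nabla f+\lambda A)(x_1)\cap(\nabla f+\lambda A)(x_2)$, writing $x^*=\nabla f(x_i)+\lambda a_i^*$ with $a_i^*\in Ax_i$ and pairing $\nabla f(x_1)-\nabla f(x_2)=\lambda(a_2^*-a_1^*)$ with $x_1-x_2$ forces $x_1=x_2$. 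Since $\Res^f_{\lambda A}(x)=(\nabla f+\lambda A)^{-1}(\nabla f(x))$ and $\nabla f$ is everywhere defined on $X$, single-valuedness of $\Res^f_{\lambda A}$ will follow automatically once the domain assertion for $(\nabla f+\lambda A)^{-1}$ is in hand.

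The remaining content of \beqref{item:SingleValued} is the surjectivity of $\nabla f+\lambda A$ onto $X^*$. I would proceed in two substeps. \emph{Maximal monotonicity of the sum:} $\nabla f=\partial f$ is maximally monotone as the subdifferential of a proper lower semicontinuous convex function, and since $\dom\nabla f=X$, Rockafellar's sum theorem in reflexive Banach spaces yields that $\nabla f+\lambda A$ is maximally monotone. \emph{Coercivity and surjectivity:} the G\^ateaux differentiability of $f^*$ on $X^*$ forces $\dom f^*=X^*$, which in a reflexive space is equivalent to super-coercivity of $f$ (the Moreau-Rockafellar duality between finiteness of $f^*$ on $X^*$ and $\lim_{\|x\|\to\infty}f(x)/\|x\|=\infty$). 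A standard reference-point shift -- replace $A$ by $\tilde{A}(\,\cdot\,):=A(\,\cdot\,+x_0)-a_0^*$ for a fixed $(x_0,a_0^*)$ in the graph of $A$, and translate/tilt $f$ accordingly -- reduces the argument to the case $0\in A(0)$. Monotonicity then yields $\langle a^*,x\rangle\geq 0$ for every $a^*\in Ax$, while the subgradient inequality $\langle\nabla f(x),x\rangle\geq f(x)-f(0)$ combined with super-coercivity yields $\langle y^*,x\rangle/\|x\|\to\infty$ uniformly over $y^*\in(\nabla f+\lambda A)(x)$. The Rockafellar-Browder theorem (a coercive maximally monotone operator on a reflexive Banach space is surjective) then gives $\dom((\nabla f+\lambda A)^{-1})=X^*$, and together with the previous paragraph this yields $\dom(\Res^f_{\lambda A})=X$ and the single-valuedness of $\Res^f_{\lambda A}$.

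Part \beqref{item:MaximallyMonotone} is now immediate: the graph of $(\nabla f+\lambda A)^{-1}$ is obtained from the graph of $\nabla f+\lambda A$ by swapping coordinates, and both the monotonicity inequality and the maximality condition are symmetric under this swap, so $(\nabla f+\lambda A)^{-1}$ inherits maximal monotonicity from the sum established in the first substep above. I expect the main obstacle to be the coercivity step: the contribution of $\lambda A$ to $\langle y^*,x\rangle/\|x\|$ must be controlled uniformly over selections $a^*\in Ax$, and this is precisely where super-coercivity of $f$ (rather than mere coercivity) is essential, since after the shift trick it must dominate any residual unsigned term coming from the monotone bound on $\langle a^*,\cdot\rangle$.
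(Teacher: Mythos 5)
Your treatment of injectivity (strict monotonicity of $\nabla f$ plus monotonicity of $\lambda A$), of the maximal monotonicity of the sum via Rockafellar's sum theorem, and of part (ii) via the graph-swap symmetry is sound and close in spirit to the paper's argument (the paper's ``Way 2'' likewise derives the at-most-one-element property from strict monotonicity, and part (ii) is proved exactly as you propose). The gap is in the surjectivity step. You assert that G\^ateaux differentiability of $f^*$ on $X^*$ forces $\dom(f^*)=X^*$, ``which in a reflexive space is equivalent to super-coercivity of $f$''. That equivalence is a finite-dimensional fact. The correct conjugate characterization of super-coercivity is that $f^*$ be bounded \emph{above on bounded subsets} of $X^*$; in an infinite-dimensional space an everywhere-finite (even continuous) convex function need not be bounded on bounded sets, so cofiniteness of $f$ is strictly weaker than super-coercivity. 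The paper itself records the equivalence you invoke only for $X=\R^m$ (Remark \bref{rem:FullyLegendreFIniteDim}), and the definition of a fully Legendre function does not entail super-coercivity in infinite dimensions. Since your entire Rockafellar--Browder coercivity argument hinges on $\langle\nabla f(x),x\rangle/\|x\|\geq (f(x)-f(0))/\|x\|\to\infty$, the surjectivity of $\nabla f+\lambda A$ is not established in the generality in which the lemma is stated.

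To close the gap you must either add super-coercivity as a hypothesis (weakening the lemma) or replace the coercivity argument by a result that works under cofiniteness alone. The paper does the latter: it invokes \cite[Theorem 3.13(iv)(b)]{BauschkeBorweinCombettes2003jour}, whose hypothesis is precisely $\dom(f^*)=X^*$, to conclude that $\dom(\Res^f_{\lambda A})=X$; alternatively (``Way 2'') it uses \cite[Proposition 4.2(iv)]{BauschkeWangYao2010inbook}, which requires only that $\nabla f$ be maximally monotone, strictly monotone, $3^*$ monotone and surjective --- and the surjectivity of $\nabla f$ does follow from full Legendreness via Lemma \bref{lem:f^*f^-1}, with no coercivity of the sum needed.
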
 

\begin{proof}
We start by presenting two proofs of Part \beqref{item:SingleValued}. \\

{\bf \noindent Way 1:} Since $A$ is monotone, $\lambda>0$ and since $f$ is fully Legendre and hence strictly convex, it follows from  \cite[Proposition  3.8(iv)(b)]{BauschkeBorweinCombettes2003jour} that $\textnormal{Res}^f_{\lambda A}$ is single-valued on its effective domain. Since $A$ is maximally monotone and hence nontrivial, since $X$ is reflexive, and since the range of $\nabla f$ is $X^*$ by Lemma \bref{lem:f^*f^-1}, we can use \cite[Theorem 3.13(iv)(b)]{BauschkeBorweinCombettes2003jour} (where the meaning of $f$ being cofinite is that $\dom(f^*)=X^*$, a condition which is fulfilled in our case since $f$ is fully Legendre) from which it follows that the resolvent belongs to the class of operators $\mathfrak{B}$ defined in  \cite[Definition 3.1]{BauschkeBorweinCombettes2003jour}. Since, according to the definition of $\mathfrak{B}$, the  effective domain of each operator which belongs to it is equal to the interior of the effective  domain of $f$, and since $f$ is defined on $X$ and its range is $\R$, we conclude that $\dom(\textnormal{Res}^f_{\lambda A})=X$. This fact, when combined with the first lines of the proof, imply that  $\textnormal{Res}^f_{\lambda A}$ is single-valued on $X$. Since $(\nabla f+ \lambda A)^{-1}=\textnormal{Res}^f_{\lambda A}\circ (\nabla f^*)$, it follows that $(\nabla f+ \lambda A)^{-1}$ is a composition of two single-valued operators and therefore it is single-valued too. \\

{\bf \noindent Way 2:} Let $F:=\{\nabla f\}$. Then $F$ is single-valued. Since $f$ is convex and G\^ateaux differentiable we have $F=\partial f$ according to \cite[Theorem 5.37, p. 77]{VanTiel1984book}. Thus  Rockafellar's  theorem \cite[Theorem A]{Rockafellar1970jour} implies that $F$ is maximally monotone. In  addition, $F$ is strictly monotone (since $f$ is strictly convex), $3^*$ monotone \cite[Lemma  3.10(iv)]{BauschkeBorweinCombettes2003jour}  and surjective (Lemma \bref{lem:f^*f^-1}). Since $A$ is maximally monotone, the above discussion implies, according to \cite[Proposition 4.2(iv)]{BauschkeWangYao2010inbook}, that the effective domain of the $F$-resolvent of $A$ is $X$. But the $F$-resolvent of $A$ is $(A+F)^{-1}\circ F$ (see  \cite[Definition  4.1]{BauschkeWangYao2010inbook}), namely it coincides with $\textnormal{Res}^f_{\lambda A}$. Therefore $\dom(\textnormal{Res}^f_{\lambda A})=X$ and hence  $\textnormal{Res}^f_{\lambda A}(x)$ contains  at least one element from $X$ for each $x\in X$. In addition, since $A$ is  monotone we can use \cite[Proposition 4.2(iii)]{BauschkeWangYao2010inbook} to deduce that 
 $\textnormal{Res}^f_{\lambda A}(x)$ contains at most one element from $X$ for each $x\in X$. Thus $\textnormal{Res}^f_{\lambda A}$ is single-valued and hence so is $(\nabla f+ \lambda A)^{-1}=\textnormal{Res}^f_{\lambda A}\circ (\nabla f^*)$. \\
 
Consider now Part \beqref{item:MaximallyMonotone}. From \cite[Proposition 3.12]{BauschkeBorweinCombettes2003jour} it follows that $\nabla f +\lambda A$ is maximally monotone. Since it is straightforward to check that an operator is maximally monotone if and only if its inverse is maximally monotone, it follows that  $(\nabla f +\lambda A)^{-1}$ is maximally monotone. 
\end{proof}

\begin{remark}\label{rem:Legendre} 
In  \cite{BauschkeBorweinCombettes2001jour} there is a general discussion concerning Legendre functions. There $X$ is an arbitrary Banach space and a proper lower  semicontinuous and convex function $f:X\to(-\infty,\infty]$ is called Legendre if it is both essentially smooth (meaning that $\partial f$ is both locally bounded and single-valued on its effective domain) and essentially strictly convex (namely, $f$ is strictly convex on every convex subset of $\dom(\partial f)$ and $(\partial f)^{-1}$ is locally bounded on its effective domain). 

If we assume that $\dom(f)=X$, then we can use \cite[Theorem  5.6(iv)]{BauschkeBorweinCombettes2001jour} to conclude that $f$ is essentially smooth if and only if it is G\^ateaux differentiable. Thus if both $f$ and $f^*$ are  G\^ateaux differentiable on $X$ and $X^*$, respectively, then both of them are essentially smooth. If we also assume that $X$ is reflexive, then we can use \cite[Theorem  5.4]{BauschkeBorweinCombettes2001jour} to conclude that both $f^*$ and $f^{**}$ are essentially strictly convex. Since in reflexive spaces  we have $f^{**}=f$ for each lower semicontinuous, proper and convex function $f:X\to (-\infty,\infty]$ (see, for instance, \cite[pp. 13 and 67]{Brezis2011book}), it follows that both $f^*$ and $f$ are essentially strictly convex. Thus  both $f$ and $f^*$ are Legendre functions. We conclude from the above-mentioned discussion and Definition \bref{def:FullyLegendre} above that a fully Legendre function is indeed a Legendre function. On the other hand, if $f$ is Legendre and the space is reflexive, then $f^*$ is also a Legendre function  \cite[Corollary 5.5]{BauschkeBorweinCombettes2001jour}. Therefore both $f$ and $f^*$ are essentially  smooth. If we also assume that  both of them are finite, then both functions are G\^ateaux differentiable according to  \cite[Theorem 5.6(iv)]{BauschkeBorweinCombettes2001jour}. The above discussion shows that if $X$ is a real reflexive Banach space and both $f$ and  $f^*$ are finite on $X$ and $X^*$ respectively, then $f$ is Legendre if and only both $f$ and $f^*$ are G\^ateaux differentiable on $X$ and $X^*$ respectively. 
\end{remark}

\begin{remark}\label{rem:FullyLegendreFIniteDim}
If our space $(X,\|\cdot\|)$ is $\R^m$ with the Euclidean norm (or any other norm) for some $m\in\N$, then there is a certain geometric characterization  for a function $f:X\to(-\infty,\infty]$ to be fully Legendre, a characterization which is perhaps more illuminating than Definition \bref{def:FullyLegendre}. Moreover, this characterization is equivalent to a simple and useful condition which involves the notion of super-coercive   functions. Using this latter condition, we explained below why a fully Legendre function defined on a finite-dimensional space must be a Bregman function. 

In order to derive these results, we recall that, according to Rockafellar \cite[p. 66]{Rockafellar1970book}, the recession function associated with a proper lower semicontinuous convex function $f:X\to(-\infty,\infty]$ is the function  $f_{\infty}:X\to(-\infty,\infty]$ which is determined by any of the following two identities: 
\begin{subequations}\label{eq:f_infty}
\begin{align}
f_{\infty}(z)&=\sup\{f(z+w)-f(w): w\in\dom(f)\}, \quad \forall z\in X,\\
f_{\infty}(z)&=\lim_{\lambda\to\infty}\frac{f(w+\lambda z)-f(w)}{\lambda}=\lim_{\lambda\to\infty}\frac{f(w+\lambda z)}{\lambda}, \quad \forall z\in X, \forall w\in\dom(f).
\end{align}
\end{subequations}
Here we follow the notation in Auslender and Teboulle \cite[p. 48 and elsewhere]{AuslenderTeboulle2003book} who  call $f_{\infty}$ ``the asymptotic function''. Rockafellar \cite[p. 66 and elsewhere]{Rockafellar1970book} denotes the recession function by ``$f0^+$''. Various properties, examples and applications of $f_{\infty}$ can be found in the books of Rockafellar \cite{Rockafellar1970book} and Auslender and Teboulle \cite{AuslenderTeboulle2003book}, in particular \beqref{eq:f_infty} which appears in \cite[Proposition 2.5.2, p. 50]{AuslenderTeboulle2003book}.  

As shown in the next paragraph, \emph{because $X$ is finite-dimensional, $f$ is fully Legendre if and only if it has the following properties: it is (Fr\'echet) differentiable (hence finite) on $X$, it is strictly convex there, and its recession function satisfies $f_{\infty}(z)=\infty$ for all $0\neq z\in X$}. But according to 
\cite[Proposition 2.16]{BauschkeBorwein1997jour}, if $f:X\to(-\infty,\infty]$ is a lower semicontinuous proper and convex function, then $f_{\infty}(z)=\infty$ for all $0\neq z\in X$ if and only if $f$ is super-coercive, namely $\lim_{\|u\|\to\infty}f(u)/\|u\|=\infty$. {\bf Thus a simple and useful equivalent condition for $f:X\to(-\infty,\infty]$ to be fully Legendre when the space $X$ is finite-dimensional is as follows: $f$ is (Fr\'echet) differentiable on the whole space, strictly convex there and super-coercive}. Figures \bref{fig:FullyLegendre1D}-\bref{fig:FullyLegendre2D} above present typical examples of functions having these properties. We note that as observed in Rockafellar \cite[p. 259]{Rockafellar1970book}, a finite convex function satisfies the condition $f_{\infty}(z)=\infty$ for all $0\neq z\in X$ if and only if its epigraph $\{(x,t)\in X\times \R: t\geq f(x)\}$ does not contain any non-vertical half-line (Rockafellar \cite[p. 259]{Rockafellar1970book} calls a finite convex function which satisfies the latter condition ``co-finite''). 

Now we prove the above-mentioned characterization. Suppose first that $f$ is fully Legendre. Then $f$ and $f^*$ are G\^ateaux differentiable on $X$ and hence, by definition, finite there. Since $X$ is finite-dimensional and both of them are convex, both of them are Fr\'echet differentiable there according to Rockafellar \cite[Theorem 25.2, p. 242]{Rockafellar1970book}. Since $X$ is reflexive, we can conclude from Remark \bref{rem:Legendre} above that both $f$ and $f^*$ are strictly convex. As a result, we can apply Rockafellar \cite[Theorem 26.6, p. 259]{Rockafellar1970book} and Lemma \bref{lem:f^*f^-1} above to conclude that $f_{\infty}(z)=\infty$ for all $0\neq z\in X$. Conversely, if $f$ is Fr\'echet differentiable on $X$, strictly convex there, and $f_{\infty}(z)=\infty$ for all $0\neq z\in X$, then in particular $f$ is a convex and lower semicontinuous (actually continuous) G\^ateaux differentiable (thus finite) function. Moreover, \cite[Theorem 26.6, p. 259]{Rockafellar1970book} implies that $f^*$ is Fr\'echet (thus G\^ateaux) differentiable on $X$. Hence we can use Definition \bref{def:FullyLegendre} to conclude that $f$ is fully Legendre. 

Finally, we need to show that when the space $X$ is finite dimensional and $f:X\to\R$ is a fully Legendre function, then $f$ is a Bregman function, namely it satisfies \cite[Definition 4.1]{BauschkeBorwein1997jour}. Indeed, the above-mentioned characterization implies that $f$ is strictly convex and differentiable on $X$ and that it is super-coercive. Hence we can use \cite[Corollary 4.8]{BauschkeBorwein1997jour} to conclude that $f$ is a Bregman function. 
\end{remark}

\begin{remark}\label{lem:FormulaConjugate}
Given a real reflexive Banach space $X$, a fully Legendre function $f:X\to\R$, and $x^*\in X^*$, it is possible to express $f^*(x^*)$ in an explicit manner, as done in \beqref{eq:f^*x^*} below. Indeed, consider the function $F:X\to\R$ defined by $F(x):=f(x)-\langle x^*,x\rangle$ for each  
$x\in X$. This function is  proper, lower semicontinuous, convex, and G\^ateaux differentiable on $X$ and hence (see \cite[Theorem 5.37, p. 77]{VanTiel1984book}) $\nabla F(x)=(\partial F)(x)$.  Moreover, since $\nabla F(x)=\nabla f(x)-x^*$, it follows from Lemma \bref{lem:f^*f^-1} that $\nabla F$ has a unique zero and this is the point $x(x^*):=(\nabla f)^{-1}(x^*)$. Thus the well-known characterization of a global minimizer \cite[Theorem 2.5.7, p. 105]{Zalinescu2002book} implies that $x(x^*)$ is a global minimizer of $F$. We conclude from 
the previous discussion and the definition of $f^*$ that $f^*(x^*)=\sup_{x\in X}[-F(x)]=-F(x(x^*))$. In other words,
\begin{equation}\label{eq:f^*x^*}
f^*(x^*)=\langle x^*,(\nabla f)^{-1}(x^*)\rangle-f((\nabla f)^{-1}(x^*)).
\end{equation}
This expression extends, to an infinite-dimensional setting, a similar expression presented in \cite[p. 259]{Rockafellar1970book}. Formula \beqref{eq:f^*x^*} is intimately related to the classical Legendre transform 
from classical mechanics, for sufficiently smooth functions defined on finite-dimensional 
spaces, and it has an application in the theory of fixed points of Legendre-Fenchel 
type transforms: see \cite[Remark 15.2]{IusemReemReich2017prep}.
\end{remark}

\section{The inexact resolvent inclusion problem}\label{sec:PrincipleResolvent}
In this section we present and solve the ``inexact resolvent inclusion problem'', a problem which is, as will be shown in later sections, very relevant to many inexact algorithms aiming at solving \beqref{eq:ZeroA}. Our existence and uniqueness result is presented in Proposition \bref{prop:PrincipleResolvent} below, and a number of comments (Remarks \bref{rem: eta in A}-\bref{rem:HistoryInexactResolventInclusionProblem} below) follow it and clarify certain issues related to it. Neither Proposition \bref{prop:PrincipleResolvent} nor its proof are complicated, and yet, this proposition is quite useful, partly because of its generality. But first, let us formulate the problem (in the formulation below we restrict ourselves to the main setting of this paper, but of course, the problem can be formulated in a wider generality, for instance one can let $X$ to be any normed space and to take $f:X\to\R$ to be any G\^ateaux differentiable function). 

Let $(X,\|\cdot\|)$ be a real reflexive Banach space, let $x\in X$, $\lambda>0$, let $f:X\to\R$ be fully Legendre, let $\eta\in X^*$ and let $A:X\to 2^{X^*}$ be maximally monotone. The \emph{inexact resolvent inclusion problem} is to find $y\in X$ such that 
\begin{equation}\label{eq:PerturbedResolventProblem}
\eta\in Ay+\frac{1}{\lambda}\left(\nabla f(y)-\nabla f(x)\right).
\end{equation}
Another name to this problem is ``the generalized proximal point subproblem'' \cite[p. 215]{SolodovSvaiter2000jour}). The vector $\eta$ can be regarded as being an error term or a perturbation, and although one knows that $\eta$ exists, one may not  necessarily be able to evaluate it (frequently one can only ensure that $\|\eta\|$ is sufficiently small; in this connection, see Remark \bref{rem: eta is unknown} below). When $\eta=0$, then one speaks of the ``exact resolvent inclusion problem''. 

\begin{lem}\label{lem:EquivalenceSystemResolvent}
For each $(x,\eta)\in X\times X^*$ and $\lambda>0$, the point $y\in X$ satisfies \beqref{eq:PerturbedResolventProblem} if and only if there exists $\xi\in X^*$ such that the pair $(y,\xi)$ satisfies the following two conditions: 
\begin{subequations}\label{eq:PrincipleSystem}
\begin{align}
\xi &\in A(y)\label{eq:xi_inclusion},\\
\eta&=\xi+\frac{1}{\lambda}\left(\nabla f(y)-\nabla f(x)\right).\label{eq:eta_xi_y}
\end{align}
\end{subequations}
\end{lem}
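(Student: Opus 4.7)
The plan is essentially to unfold the definitions of set-valued inclusion and of the sum of sets (specifically, the sum of a set with a singleton) that were established in Section~\ref{sec:Preliminaries} just after Definition~\ref{def:Resolvent}. Because $\nabla f$ is single-valued (since $f$ is G\^ateaux differentiable, hence a fortiori finite at every relevant point), the element $\frac{1}{\lambda}(\nabla f(y)-\nabla f(x))$ is a single vector in $X^*$, and thus the sum $Ay+\frac{1}{\lambda}(\nabla f(y)-\nabla f(x))$, when nonempty, is (under the paper's convention) the set $\{\xi+\frac{1}{\lambda}(\nabla f(y)-\nabla f(x)) : \xi\in Ay\}$.

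For the forward implication, I would assume \eqref{eq:PerturbedResolventProblem}. In particular the sum on the right-hand side is nonempty, so $Ay\neq\emptyset$ (otherwise the sum would be $\emptyset$ by the convention in Section~\ref{sec:Preliminaries}). Hence by the explicit description of the sum just recalled, there is some $\xi\in Ay$ with $\eta = \xi+\frac{1}{\lambda}(\nabla f(y)-\nabla f(x))$. This $\xi$ satisfies both \eqref{eq:xi_inclusion} and \eqref{eq:eta_xi_y}.

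For the converse, I would start from a pair $(y,\xi)$ satisfying \eqref{eq:xi_inclusion}--\eqref{eq:eta_xi_y}. Since $\xi\in Ay$, the set $Ay$ is nonempty, so by the same description the vector $\xi+\frac{1}{\lambda}(\nabla f(y)-\nabla f(x))$ belongs to $Ay+\frac{1}{\lambda}(\nabla f(y)-\nabla f(x))$; by \eqref{eq:eta_xi_y} this vector equals $\eta$, establishing \eqref{eq:PerturbedResolventProblem}.

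There is really no obstacle here: the lemma is a definitional reformulation splitting the inclusion \eqref{eq:PerturbedResolventProblem} into the selection of an element $\xi\in Ay$ and the corresponding linear identity in $X^*$. The only subtlety worth flagging in the writeup is the empty-set convention for sums, so that the nonemptiness of $Ay$ is deduced rather than assumed. This reformulation is what will later let us exploit the single-valuedness from Lemma~\ref{lem:ResolventOrdinaryFunction} to obtain the explicit representation \eqref{eq:y_explicit} of the unique solution.
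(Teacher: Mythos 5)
Your proposal is correct and follows essentially the same route as the paper's proof: both arguments simply unfold the convention for sums of sets (with the empty-sum convention forcing $Ay\neq\emptyset$) to translate the inclusion \beqref{eq:PerturbedResolventProblem} into the existence of a selection $\xi\in Ay$ satisfying \beqref{eq:eta_xi_y}, and conversely. Your explicit remark that $\nabla f$ is single-valued, so the second summand is a singleton, is a slightly more detailed justification than the paper gives, but it is the same argument.
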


\begin{proof}
If some $y\in X$ satisfies \beqref{eq:PerturbedResolventProblem}, then $\eta\in (1/\lambda)(\nabla f(y)-\nabla f(x))+Ay$, so the sum is nonempty and by its definition there exists $\xi\in Ay$ such that $\eta=(1/\lambda)(\nabla f(y)-\nabla f(x))+\xi$, namely \beqref{eq:PrincipleSystem} holds. On the other hand, if \beqref{eq:PrincipleSystem} holds, then the sum $(1/\lambda)(\nabla f(y)-\nabla f(x))+Ay$ is nonempty and $\eta\in (1/\lambda)(\nabla f(y)-\nabla f(x))+Ay$. 
\end{proof}

\begin{prop}\label{prop:PrincipleResolvent}
Let $(X,\|\cdot\|)$ be a real reflexive Banach space and suppose that $f:X\to\R$ is fully Legendre.  Let $A:X\to 2^{X^*}$ be a  maximally monotone operator. Then for all $x\in X$, $\eta\in X^*$ and $\lambda>0$, there exists a unique $y\in X$ such that \beqref{eq:PerturbedResolventProblem} holds and a unique pair $(y,\xi)\in X\times X^*$  such that \beqref{eq:PrincipleSystem} holds. Moreover, the following relations hold:
\begin{subequations}\label{eq:PrincipleExplicit}
%\left\{
\begin{align}
y&=(\nabla f+ \lambda A)^{-1}\left(\lambda\eta+\nabla f(x)\right),\label{eq:y_explicit}\\
\xi&=\eta-\frac{1}{\lambda}(\nabla f(y)-\nabla f(x)).\label{eq:xi_explicit}
\end{align}
\end{subequations}
\end{prop}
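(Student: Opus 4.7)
The plan is to reduce the problem, via Lemma \ref{lem:EquivalenceSystemResolvent}, to a straightforward application of Lemma \ref{lem:ResolventOrdinaryFunction}. The fundamental observation is that \eqref{eq:PerturbedResolventProblem} can be rearranged as
\begin{equation*}
\lambda\eta + \nabla f(x) \in \nabla f(y) + \lambda A(y) = (\nabla f + \lambda A)(y),
\end{equation*}
which, by definition of the inverse relation, is equivalent to
\begin{equation*}
y \in (\nabla f + \lambda A)^{-1}\bigl(\lambda\eta + \nabla f(x)\bigr).
\end{equation*}
Since $f$ is fully Legendre and $A$ is maximally monotone with $\lambda > 0$, Lemma \ref{lem:ResolventOrdinaryFunction}\eqref{item:SingleValued} guarantees that $(\nabla f + \lambda A)^{-1}$ is single-valued on all of $X^*$. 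Applying this to the point $\lambda\eta + \nabla f(x) \in X^*$ produces exactly one $y \in X$, and this $y$ is given by formula \eqref{eq:y_explicit}.

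Next I would pass to the pair $(y,\xi)$ satisfying \eqref{eq:PrincipleSystem}. By Lemma \ref{lem:EquivalenceSystemResolvent}, the existence of some $\xi$ such that $(y,\xi)$ solves \eqref{eq:PrincipleSystem} is equivalent to $y$ being a solution of \eqref{eq:PerturbedResolventProblem}, so existence of such a pair is already in hand. For uniqueness and the explicit formula \eqref{eq:xi_explicit}, I note that once $y$ is fixed by the previous paragraph, equation \eqref{eq:eta_xi_y} forces
\begin{equation*}
\xi = \eta - \tfrac{1}{\lambda}\bigl(\nabla f(y) - \nabla f(x)\bigr),
\end{equation*}
which is a single element of $X^*$; the inclusion \eqref{eq:xi_inclusion} then follows because this $\xi$ was produced in Lemma \ref{lem:EquivalenceSystemResolvent} precisely as a witness to the membership $\eta \in Ay + (1/\lambda)(\nabla f(y) - \nabla f(x))$.

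Since the conceptual content is entirely carried by Lemma \ref{lem:ResolventOrdinaryFunction} (which in turn rests on Lemma \ref{lem:f^*f^-1} and the general theory of $D$-resolvents), there is no genuine obstacle here beyond bookkeeping: the proof is essentially a one-line rearrangement followed by an invocation of single-valuedness and full-domain of $(\nabla f + \lambda A)^{-1}$, plus a direct algebraic solve for $\xi$. The slight subtlety worth being explicit about is that the argument passed to $(\nabla f + \lambda A)^{-1}$ is an arbitrary element of $X^*$ (namely $\lambda\eta + \nabla f(x)$), so one must indeed have $\dom((\nabla f+\lambda A)^{-1}) = X^*$ rather than merely the range of $\nabla f$; this is exactly the content of the second assertion in Lemma \ref{lem:ResolventOrdinaryFunction}\eqref{item:SingleValued}.
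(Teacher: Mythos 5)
Your proposal is correct and follows essentially the same route as the paper: both arguments rest on Lemma \bref{lem:EquivalenceSystemResolvent} together with the single-valuedness and full domain of $(\nabla f+\lambda A)^{-1}$ from Lemma \bref{lem:ResolventOrdinaryFunction}\beqref{item:SingleValued}. The only difference is organizational --- you solve \beqref{eq:PerturbedResolventProblem} for $y$ directly via the resolvent equivalence and then read off $\xi$ from the lemma's witness, which lets you skip the paper's separate fixed-point verification that $\xi\in A(y)$; that verification is just the unpacking of the set sum that Lemma \bref{lem:EquivalenceSystemResolvent} already performs.
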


\begin{proof}
By Lemma \bref{lem:EquivalenceSystemResolvent}, it is sufficient to show the existence and uniqueness of a pair $(y,\xi)\in X\times X^*$ which satisfies \beqref{eq:PrincipleSystem}. 
We first prove the existence of such a pair. Lemma \bref{lem:ResolventOrdinaryFunction}\beqref{item:SingleValued} ensures that $(\nabla f+ \lambda A)^{-1}$  is single-valued. Hence for all $x\in X$, $\eta\in X^*$ and $\lambda>0$, if we let $y$ to be defined as the right-hand side of \beqref{eq:y_explicit}, then $y$ is well defined. Thus if we define $\xi$ by the right-hand side of  \beqref{eq:xi_explicit}, then $\xi$ is well defined and \beqref{eq:eta_xi_y} holds. It remains to prove \beqref{eq:xi_inclusion}. Since \beqref{eq:xi_explicit} implies that 
\begin{equation}\label{eq:lambda+y=lambda+x}
\nabla f(y)+\lambda\xi=\nabla f(x)+\lambda\eta,
\end{equation}
the following implicit (fixed point) relation follows from \beqref{eq:y_explicit} and \beqref{eq:lambda+y=lambda+x}:  
\begin{equation}
y=(\nabla f+\lambda A)^{-1}(\lambda\xi+\nabla f(y)).
\end{equation}
This equality implies that $y\in(\nabla f+\lambda A)^{-1}(\lambda\xi+\nabla f(y))$ (of course,  $y$ is the unique element in this set). Hence from the definition of the inverse operator 
we see that $\lambda\xi+\nabla f(y)\in (\nabla f+\lambda A)(y)$. 
Since $\nabla f$ is single-valued and since the sum of two sets is nonempty if and only if both sets are nonempty, the above discussion shows the existence of an element $q\in A(y)$ such that 
$\lambda\xi+\nabla f(y)=\nabla f(y)+\lambda q$. Since $\lambda\neq 0$, we conclude that $\xi=q$  and hence \beqref{eq:xi_inclusion} holds. 

Now we prove the uniqueness of a solution to \beqref{eq:PrincipleSystem}. Let $(y,\xi)\in X\times X^*$ be an arbitrary solution to \beqref{eq:PrincipleSystem}. It follows from \beqref{eq:eta_xi_y} that $\xi$ coincides with the right-hand side of \beqref{eq:xi_explicit}. In order to show that $y$ coincides with the 
right-hand side of \beqref{eq:y_explicit}, consider   \beqref{eq:eta_xi_y}. This equality implies \beqref{eq:lambda+y=lambda+x}. By  \beqref{eq:lambda+y=lambda+x} and  \beqref{eq:xi_inclusion} we have $\lambda\eta+\nabla f(x)\in (\nabla f+\lambda A)(y)$. This relation is equivalent to the relation $y\in (\nabla f+\lambda A)^{-1}(\lambda\eta+\nabla f(x))$. Since we know from  Lemma  \bref{lem:ResolventOrdinaryFunction}\beqref{item:SingleValued} that $(\nabla f+ \lambda A)^{-1}$ is single-valued, it follows that $y=(\nabla f+\lambda A)^{-1}(\lambda\eta+\nabla f(x))$, that is, $y$ coincides with the right-hand side of \beqref{eq:y_explicit} and we have uniqueness, as claimed.
\end{proof}

\begin{remark}\label{rem: eta in A}
 It is possible to formulate and prove Proposition \bref{prop:PrincipleResolvent} by embedding the error term $\eta$ inside the operator $A$ (which will be re-defined), but we feel that the current statement and proof better emphasize the presence of the error term. Many of the existing inexact algorithmic schemes cited in our paper support this point of view. We also note that as far as we understand, the proof of Proposition \bref{prop:PrincipleResolvent} (via Lemma \bref{lem:ResolventOrdinaryFunction} above) does not follow directly from \cite[Proposition  3.8]{BauschkeBorweinCombettes2003jour}, but requires  additional tools such as \cite[Theorem 3.13(iv)(b)]{BauschkeBorweinCombettes2003jour}, as done in Lemma \bref{lem:ResolventOrdinaryFunction}; note that we did not assume in that lemma that $A$ has a zero: in this latter case we could use \cite[Corollary 3.14]{BauschkeBorweinCombettes2003jour} instead of \cite[Theorem 3.13(iv)(b)]{BauschkeBorweinCombettes2003jour}. 
\end{remark}
\begin{remark}\label{rem: eta is unknown}
We emphasize again that in Proposition \bref{prop:PrincipleResolvent} above one may or may not be able to evaluate the error term 
$\eta$ (frequently only the magnitude of $\eta$ can be estimated). In particular, in applications usually $\eta$ is not given in advance to the users, but rather appears due to noise or computational errors, and what one knows is simply that $\eta$ exists. But this lack 
of ability to evaluate $\eta$ does not change the assertion proved in Proposition \bref{prop:PrincipleResolvent} that the unknown $y$ can be represented using $\eta$ and other parameters/unknowns which appear in the statement of Proposition \bref{prop:PrincipleResolvent}. This situation is analogous to the case of a simpler relation, for example the equation $2a+3b+c=-1$, in which, even
if all of the involved variables are unknown to the users (for instance because they are random variables which model some noise), it is still possible to represent each one of these unknowns in terms of the other unknowns. 
\end{remark}
\begin{remark}\label{rem:ExplicitSolution}
One may argue that the formula for $y$ given in \beqref{eq:y_explicit} is not really explicit because the computation of the protoresolvent $(\nabla f+\lambda A)^{-1}$ is generally not easy. We agree that the computation of the protoresolvent can be difficult, but we believe that the representation given in \beqref{eq:y_explicit} has advantages. These advantages are illustrated in the continuity results mentioned in Section \bref{sec:Continuity} below, in the strongly implicit version of the inexact resolvent inclusion problem (Proposition \bref{prop:ImplicitInexactness} below), and in the various consequences  of Proposition \bref{prop:ImplicitInexactness} (Sections \bref{sec:SolodovSvaiter1999-1}--\bref{sec:ManyMore} below). 
\end{remark}

\begin{remark}\label{rem:HistoryInexactResolventInclusionProblem}
To the best of our knowledge, so far the inexact resolvent inclusion problem \beqref{eq:PerturbedResolventProblem} has neither been discussed in a thorough way nor  in a general setting. However, there is, in a few places, a closely related discussion on closely related versions of \beqref{eq:PerturbedResolventProblem}. This discussion is brief, not always direct and sometimes also scattered. The first related discussion is implicit in Rockafellar \cite[Proof of Proposition 3, p. 882]{Rockafellar1976jour} in which one can find an explicit formula concerning  the solution to the problem when the setting is the classical one, that is, $X$ is a real Hilbert space, $A$ is maximally monotone and $f:=\frac{1}{2}\|\cdot\|^2$. This result is sometimes briefly mentioned elsewhere, for instance in \cite[p. 420]{AhmadiKhatibzadeh2014jour},\cite[p. 331]{BrezisLions1978jour}, and \cite[p. 412]{Djafari-RouhaniKhatibzadeh2008jour}. 

The second related discussion is scattered in the paper of Auslender, Teboulle and Ben-Tiba \cite[Propositions 1, 2]{AuslenderTeboulleBen-Tiba1999jour}. They consider a finite-dimensional space and impose several assumptions on $f$. Existence and sometimes uniqueness have been shown, but no explicit formula for the solution was presented. See Remark \bref{rem:AuslenderTeboulleBen-Tiba}  below for more details regarding \cite{AuslenderTeboulleBen-Tiba1999jour}. The third and fourth relevant  places are in G{\'a}rciga Otero and Iusem  \cite[Proposition 3.3]{Garciga-OteroIusem2004jour}, and Iusem and G{\'a}rciga Otero  \cite[Proposition 7]{IusemOtero2001jour}, respectively, and the fifth place is in Burachik and Iusem \cite[Proposition 6.6.3, p. 236]{BurachikIusem2008book}. In all of these cases $X$ is a real reflexive  Banach space, $A$ is maximally monotone, single-valued and continuous, $f$ is assumed to be a Bregman function satisfying additional properties, a specific sequence $(x_n)_{n=0}^{\infty}$ is considered and this sequence is based on a certain implicit version of \beqref{eq:PerturbedResolventProblem}. It is shown that when $x_n$ is not a zero of $A$, then any point in a neighborhood of an exact  solution to \beqref{eq:PerturbedResolventProblem} solves the considered implicit version of  \beqref{eq:PerturbedResolventProblem}.

In the exact resolvent (namely, when $\eta=0$) the solution to \beqref{eq:PerturbedResolventProblem} is well known in the classical case where $X$ is a Hilbert space, $f=\frac{1}{2}\|\cdot\|^2$, and $A$ is maximally monotone: in this case $y=(I+\lambda A)^{-1}(x)$ (see, for example, \cite[p. 878]{Rockafellar1976jour}; this result is frequently attributed to Minty \cite{Minty1962jour}, but in that paper Minty \cite[p. 344]{Minty1962jour} proved it under the additional assumptions that $A$ is single-valued and continuous). The solution is known (although not very well known) also in settings which are more general than real Hilbert spaces and $f=\frac{1}{2}\|\cdot\|^2$, but it is somewhat  scattered both in the literature and in the manner in which it is formulated. See, for instance, \cite[Corollary  3.1]{BurachikScheimberg2000jour} and \cite[p. 477]{ReichSabach2009jour}.  
\end{remark}

\section{A strongly implicit version of the inexact resolvent inclusion problem}\label{sec:Implicit}
In various papers, among them  \cite[Algorithm 2.1]{BurachikScheimbergSvaiter2001jour}, \cite[the algorithms in Section 4]{OteroIusem2013jour}, \cite[Algorithm 1]{OteroSvaiter2004jour}, \cite[Algorithms I, II]{IusemOtero2001jour}, \cite[Method 1]{IusemPennanenSvaiter2003jour}, \cite[Algorithm 3.1]{ParenteLotitoSolodov2008jour},\cite[Algorithm (B)]{Rockafellar1976jour}, \cite[Algorithm 3.1]{SolodovSvaiter1999-2jour},\cite[Algorithm 1]{SolodovSvaiter1999-1jour},\cite[Relation (9)]{SolodovSvaiter2000incol},\cite[Algorithm 2.1]{SolodovSvaiter2001jour}, one can find versions of the resolvent inclusion problem \beqref{eq:PrincipleSystem} in which the error term $\eta$ is not arbitrary but instead should satisfy a condition which is related to the sought solution $(y,\xi)$ of \beqref{eq:PrincipleSystem}. More precisely, given a real reflexive Banach space $(X,\|\cdot\|)$, a fully Legendre function $f:X\to\R$, a maximally monotone operator $A:X\to2^{X^*}$, a point $x\in X$, a positive number $\lambda$ and certain real-valued functions $\Phi(\cdot,\cdot,\cdot,\cdot)$  and $\Psi(\cdot,\cdot,\cdot,\cdot)$, we seek a triplet $(\eta,y,\xi)\in X^*\times X\times X^*$ such that the following system of conditions is satisfied:  
\begin{subequations}\label{eq:StronglyImplicitResolventInclusion}
\begin{align}
&\xi \in A(y),\label{eq:StronglyImplicit xiA(y)}\\
&\eta=\xi+\frac{1}{\lambda}\left(\nabla f(y)-\nabla f(x)\right),\label{eq:StronglyImplicit eta=xi_y}\\
&\Phi(\eta,\xi,x,y)<\Psi(\eta,\xi,x,y).\label{eq:Phi<Psi}
\end{align}
\end{subequations}
In other words, the original system of conditions \beqref{eq:PrincipleSystem} becomes strongly implicit. Below we formulate a simple but general proposition which extends many of the strongly implicit versions of the resolvent inclusion problem in the literature of which we are  aware. Later (Sections  \bref{sec:SolodovSvaiter1999-1}-\bref{sec:ManyMore}) we apply this proposition to deduce the well-definedness of the algorithmic schemes mentioned above. Due to the strong implicit nature expressed in Proposition \bref{prop:ImplicitInexactness} below, it is not surprising that the result has a certain local character. This, in some sense, is similar to the case of the classical implicit function theorem. 

\begin{prop}\label{prop:ImplicitInexactness}
Let $(X,\|\cdot\|)$ be a real reflexive Banach space. Let $f:X\to\R$ be fully Legendre and $A:X\to 2^{X^*}$ be maximally monotone. Let $U\subseteq X^*$ be an open subset containing $0$ and let $\Phi:U\times X^*\times X^2\to\R$ and $\Psi:U\times X^*\times X^2\to\R$ be two functions. For all $x\in X$, all $\lambda\in (0,\infty)$ and all $\eta\in U$, denote 
\begin{subequations}\label{eq:y xi phi psi}
\begin{align}
\wt{y}(\eta)&:=(\nabla f+ \lambda A)^{-1}\left(\lambda\eta+\nabla f(x)\right),\\ 
\wt{\xi}(\eta)&:=\eta-\frac{1}{\lambda}(\nabla f(\wt{y}(\eta))-\nabla f(x)),\\ 
\phi(\eta)&:=\Phi(\eta,\wt{\xi}(\eta),x,\wt{y}(\eta)),\\
\psi(\eta)&:=\Psi(\eta,\wt{\xi}(\eta),x,\wt{y}(\eta)),\\
\theta(\eta)&:=\psi(\eta)-\phi(\eta). 
\end{align}
\end{subequations}
Assume that $\theta$ is lower semicontinuous at 0 (in particular, this occurs when $\psi$ is lower semicontinuous at 0 and $\phi$ is upper semicontinous at 0; this latter case occurs, in particular, when both functions are continuous at $0$) and also that $\theta(0)>0$ (in particular, this happens when $\phi(0)=0$ and $\psi(0)>0$). Then there is $r>0$ such that each $\eta\in X^*$ satisfying $\|\eta\|<r$ belongs to $U$ and for every such $\eta$ there exists a unique pair $(y,\xi)\in X\times X^*$ such that $(\eta,y,\xi)$ satisfies \beqref{eq:StronglyImplicitResolventInclusion}. Moreover, \beqref{eq:PrincipleExplicit} holds, namely $y=\wt{y}(\eta)$ and $\xi=\wt{\xi}(\eta)$ for all such $\eta$.
\end{prop}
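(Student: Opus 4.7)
The plan is to combine Proposition \bref{prop:PrincipleResolvent} with a standard lower-semicontinuity argument about a real-valued function near zero. First, since $U$ is open and contains $0\in X^*$, I would fix some $r_0 > 0$ with $\{\eta \in X^* : \|\eta\|_* < r_0\} \subseteq U$, so that the quantities $\wt{y}(\eta)$, $\wt{\xi}(\eta)$, $\phi(\eta)$, $\psi(\eta)$, $\theta(\eta)$ introduced in \beqref{eq:y xi phi psi} are meaningful for every $\eta$ in this ball. For each such $\eta$, Proposition \bref{prop:PrincipleResolvent} applied to the data $(x,\eta,\lambda)$ asserts precisely that $(\wt{y}(\eta),\wt{\xi}(\eta))$ is the \emph{unique} pair $(y,\xi)\in X\times X^*$ satisfying the two conditions \beqref{eq:StronglyImplicit xiA(y)} and \beqref{eq:StronglyImplicit eta=xi_y}, and moreover gives the explicit formulas \beqref{eq:PrincipleExplicit}. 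Consequently the entire system \beqref{eq:StronglyImplicitResolventInclusion} reduces, for $\eta$ in this ball, to the single scalar inequality $\theta(\eta)>0$.

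Second, I would use the lower semicontinuity of $\theta$ at $0$ together with $\theta(0)>0$ to find a possibly smaller radius on which the scalar inequality is preserved. Concretely, fix any $\epsilon$ with $0<\epsilon<\theta(0)$; by the definition of lower semicontinuity at $0$ there is some $r_1\in (0,r_0]$ such that $\theta(\eta)>\theta(0)-\epsilon>0$ for every $\eta\in U$ with $\|\eta\|_*<r_1$. Taking $r:=r_1$, every $\eta\in X^*$ with $\|\eta\|_*<r$ lies in $U$, makes $(\wt{y}(\eta),\wt{\xi}(\eta))$ the unique pair satisfying \beqref{eq:StronglyImplicit xiA(y)}--\beqref{eq:StronglyImplicit eta=xi_y}, and additionally satisfies \beqref{eq:Phi<Psi}; this yields the full existence and uniqueness assertion together with the explicit formulas.

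I do not expect any genuine obstacle here: the work is essentially a one-step reduction to Proposition \bref{prop:PrincipleResolvent}, combined with the textbook fact that a real-valued function which is lower semicontinuous and strictly positive at a point remains strictly positive in a neighborhood of that point. The parenthetical sufficient conditions in the hypothesis (lower semicontinuity of $\psi$ and upper semicontinuity of $\phi$ at $0$, or continuity of both, respectively $\phi(0)=0$ and $\psi(0)>0$) are not part of the proof but only motivate the stated hypothesis; they follow immediately from the standard semicontinuity calculus and from the definition of $\theta$. The one subtle point worth emphasizing is conceptual rather than technical: Proposition \bref{prop:PrincipleResolvent} already pins down $(y,\xi)$ uniquely from $(x,\eta,\lambda)$, so the strongly implicit constraint \beqref{eq:Phi<Psi} plays the role only of selecting the admissible error vectors $\eta$, not of further restricting the pairs $(y,\xi)$.
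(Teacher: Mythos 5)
Your proposal is correct and follows essentially the same route as the paper: reduce the system \beqref{eq:StronglyImplicitResolventInclusion} via Proposition \bref{prop:PrincipleResolvent} to the scalar condition $\theta(\eta)>0$, then use lower semicontinuity of $\theta$ at $0$ together with $\theta(0)>0$ to secure that condition on a small ball contained in $U$. The only cosmetic difference is that the paper fixes $\epsilon:=0.5\,\theta(0)$ while you take an arbitrary $\epsilon\in(0,\theta(0))$, which changes nothing.
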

\begin{proof}
Since $U$ is open and $0\in U$, because $\theta$ is lower semicontinuous  at 0, and because $\theta(0)>0$, for $\epsilon:=0.5\theta(0)$ there is $r>0$ small enough such that any $\eta\in X^*$ satisfying $\|\eta\|<r$ belongs to $U$ and we have $\theta(\eta)>\theta(0)-\epsilon=0.5\theta(0)>0$. Since $\theta=\psi-\phi$ we have $\phi(\eta)<\psi(\eta)$ for all such $\eta$. This inequality and \beqref{eq:y xi phi psi} imply that \beqref{eq:Phi<Psi} holds with $y:=\wt{y}(\eta)$ and $\xi:=\wt{\xi}(\eta)$. In addition, Proposition \bref{prop:PrincipleResolvent} implies that this pair $(y,\xi)$ is the unique pair in $X\times X^*$ which satisies \beqref{eq:PrincipleSystem} (that is, it satisfies \beqref{eq:StronglyImplicit xiA(y)}-\beqref{eq:StronglyImplicit eta=xi_y}).  
\end{proof}

A sufficient condition for $\phi$ and $\psi$ from \beqref{eq:y xi phi psi} to be continuous at 0 is that the functions $\Phi$, $\Psi$, $\nabla f$, $(\nabla f+\lambda A)^{-1}$ are continuous. Among these functions, the first three are often continuous. As shown in Section \bref{sec:Continuity}, there are various simple sufficient conditions which imply the continuity of the fourth one.

\section{Continuous dependence of the solution of \beqref{eq:PrincipleSystem} on some involved parameters and a continuity property of the protoresolvent}\label{sec:Continuity}

A well-known phenomenon which occurs frequently (but  not always) in the theory of differential equations is the phenomenon of well-posed problems \cite[pp. 141--142]{Walter1998book}  (problems having this property are sometimes also called ``properly posed'' \cite[p. 227]{CourantHilbert1962IIbook}). The meaning of this notion is that there exists a unique solution to the considered  problem and this solution depends continuously on key parameters which describe the problem, that is, small perturbations in these parameters cause the solution of the problem to change only slightly (this continuous dependence  phenomenon is also called ``stability'' \cite[p. 2]{PinchoverRubinstein2005book}). We already know from Proposition \bref{prop:PrincipleResolvent} that  \beqref{eq:PrincipleSystem} has a unique solution. As is shown in Proposition \bref{prop:ContinuousDependence} below, if the both $\nabla f$ and the protoresolvent are  continuous, then the continuous dependence phenomenon occurs also in the case of \beqref{eq:PrincipleSystem}. As a result, frequently the inexact resolvent inclusion  problem \beqref{eq:PrincipleSystem} is well posed.

In what follows we first formulate Proposition \bref{prop:ContinuousDependence}. Then we formulate  several simple sufficient conditions which guarantee the continuity of the protoresolvent  (Corollary \bref{cor:ResolventContinuous}, Examples  \bref{ex:FiniteDimension}--\bref{ex:PowerNorm}) and also introduce (Definition \bref{def:ExtendStronglyMonotone} below) a certain generalization of the notion of strong monotonicity (our generalization is a variation of \cite[Definition 22.1, p. 383]{BauschkeCombettes2017book}). The usefulness of the assertions discussed here will become clear in Sections \bref{sec:SolodovSvaiter1999-1}--\bref{sec:ManyMore} below when we use them, together with the result about the strongly implicit version of the inexact resolvent inclusion problem (Proposition \bref{prop:ImplicitInexactness}), to prove the well-definedness of various inexact algorithmic schemes.  

\begin{prop}\label{prop:ContinuousDependence}
Under the conditions of Proposition \bref{prop:PrincipleResolvent}, suppose that both $\nabla f$ and   $(\nabla f+\lambda A)^{-1}$ are continuous. For each pair $(x,\eta)\in X\times X^*$, denote by  $(\wt{y}(x,\eta),\wt{\xi}(x,\eta))$ the unique solution in $X\times X^*$ to \beqref{eq:PrincipleSystem}. Then $\wt{y}(\cdot,\cdot)$ and $\wt{\xi}(\cdot,\cdot)$ are continuous functions. 
\end{prop}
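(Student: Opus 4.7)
The plan is to exploit the explicit formulas \beqref{eq:y_explicit}--\beqref{eq:xi_explicit} provided by Proposition \bref{prop:PrincipleResolvent}, which give
\begin{equation*}
\wt{y}(x,\eta)=(\nabla f+\lambda A)^{-1}\bigl(\lambda\eta+\nabla f(x)\bigr),\qquad \wt{\xi}(x,\eta)=\eta-\frac{1}{\lambda}\bigl(\nabla f(\wt{y}(x,\eta))-\nabla f(x)\bigr).
\end{equation*}
Once these closed-form expressions are in hand, the proposition reduces to a routine composition-of-continuous-maps argument, so there is no real obstacle to overcome; the main point is simply to assemble the pieces carefully with respect to the product topology on $X\times X^*$.

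First I would verify the continuity of $\wt{y}(\cdot,\cdot)$. The map $(x,\eta)\mapsto \lambda\eta+\nabla f(x)$ from $X\times X^*$ into $X^*$ is continuous because scalar multiplication and vector addition in $X^*$ are continuous and $\nabla f$ is continuous by hypothesis. Composing this with the (single-valued, by Lemma \bref{lem:ResolventOrdinaryFunction}\beqref{item:SingleValued}) protoresolvent $(\nabla f+\lambda A)^{-1}$, which is continuous by hypothesis, yields the continuity of $\wt{y}(\cdot,\cdot):X\times X^*\to X$.

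Next I would treat $\wt{\xi}(\cdot,\cdot)$. Writing
\begin{equation*}
\wt{\xi}(x,\eta)=\eta-\frac{1}{\lambda}\nabla f(\wt{y}(x,\eta))+\frac{1}{\lambda}\nabla f(x),
\end{equation*}
we see that each summand is continuous on $X\times X^*$: the first is the projection onto the second coordinate, the second is the composition of the already-established continuous map $\wt{y}(\cdot,\cdot)$ with the continuous map $\nabla f$, and the third is the composition of the projection onto the first coordinate with $\nabla f$. Since vector addition and scalar multiplication are continuous in $X^*$, the function $\wt{\xi}(\cdot,\cdot)$ is continuous, which completes the proof.
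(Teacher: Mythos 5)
Your proof is correct and follows exactly the route the paper takes: the paper simply observes that the proposition follows immediately from Proposition \bref{prop:PrincipleResolvent} and the explicit formulas \beqref{eq:PrincipleExplicit}, and your argument is the careful spelling-out of that composition-of-continuous-maps reasoning.
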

\begin{proof}
This assertion follows immediately from Proposition \bref{prop:PrincipleResolvent} and \beqref{eq:PrincipleExplicit}.
\end{proof}
Additional types of a ``well-behaved'' dependence of the solutions of optimization problems on some of the involved parameters can be found in \cite{BonnansShapiro2000book}. Now we continue with a definition and a lemma. 

\begin{defin}\label{def:ExtendStronglyMonotone}
Let $(X,\|\cdot\|)$ be a real normed space. An operator $B:X\to 2^{X^*}$ is called uniformly monotone with modulus $\mu$ and pre-modulus $\wt{\mu}$ if 
\begin{equation}\label{eq:UniformMonotoneMu}
\langle u_1-u_2, y_1-y_2\rangle\geq \mu(\|y_1-y_2\|),\quad \forall\, y_1,y_2\in X, u_1\in By_1, u_2\in By_2,
\end{equation}
where $\mu:[0,\infty)\to[0,\infty)$ has the form $\mu(t)=t\wt{\mu}(t)$ for all $t\in [0,\infty)$, and where $\wt{\mu}:[0,\infty)\to [0,\infty)$ is increasing and invertible. $B$ is called uniformly monotone of power type $\rho$ if there are $\beta>0$ and $\rho>1$ such that $B$ is uniformly monotone with modulus $\mu(t):=\beta t^{\rho}$ for all $t\in [0,\infty)$. In other words, 
\begin{equation}\label{eq:Bhp-StronglyMonotone}
\langle u_1-u_2, y_1-y_2\rangle\geq \beta\|y_1-y_2\|^{\rho},\quad \forall\, y_1,y_2\in X, u_1\in By_1, u_2\in By_2, 
\end{equation}
A uniformly monotone operator of power type 2 is called strongly monotone. 
\end{defin}
A useful property of a pre-modulus $\wt{\mu}$ is that 
\begin{equation}\label{eq:mu(0)=0}
\wt{\mu}(0)=0=\wt{\mu}^{-1}(0).
\end{equation}
Indeed, $\wt{\mu}(0)\in [0,\infty)$ by our assumption. If, to the contrary, $\wt{\mu}(0)>0$, then $\wt{\mu}(t)\geq \wt{\mu}(0)>0$ for all $t\in [0,\infty)$ since $\wt{\mu}$ is increasing. Hence no $t\in [0,\infty)$ satisfies $\wt{\mu}(t)\in [0,\wt{\mu}(0))$, a contradiction to the assumption that $\wt{\mu}$ is onto $[0,\infty)$.

\begin{lem}\label{lem:ResolventContinuous}
Let $(X,\|\cdot\|)$ be a real normed space. Assume that $A:X\to 2^{X^*}$ is monotone and $B:X\to 2^{X^*}$ is uniformly monotone with pre-modulus $\wt{\mu}$. Then for all $w_1,w_2\in X^*$, $x_1\in (A+B)^{-1}w_1$, and $x_2\in (A+B)^{-1}w_2$, one has
\begin{equation}\label{eq:tilde mu}
\|x_1-x_2\|\leq \widetilde{\mu}^{-1}(\|w_1-w_2\|).
\end{equation}
In particular, if $B$ is uniformly monotone of power type $\rho>1$, then for all $w_1,w_2\in X^*$, $x_1\in (A+B)^{-1}w_1$, $x_2\in (A+B)^{-1}w_2$, one has
\begin{equation}\label{eq:hHolderContinuity}
\|x_1-x_2\|\leq \left(\frac{\|w_1-w_2\|}{\beta}\right)^{\frac{1}{\rho-1}}. 
\end{equation}
\end{lem}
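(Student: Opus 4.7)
The plan is straightforward and rests on unpacking the definition of the sum operator and then chaining the monotonicity estimates of $A$ and $B$ together with duality.

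First I would fix $w_1,w_2\in X^*$ and arbitrary $x_i\in (A+B)^{-1}w_i$ for $i\in\{1,2\}$. By the definition of the inverse of a set-valued operator, $w_i\in (A+B)(x_i)$, and by the definition of the sum of set-valued operators recalled after Definition \ref{def:Resolvent}, there exist $a_i\in Ax_i$ and $b_i\in Bx_i$ with $w_i=a_i+b_i$. The key identity is then
\begin{equation*}
\langle w_1-w_2,x_1-x_2\rangle=\langle a_1-a_2,x_1-x_2\rangle+\langle b_1-b_2,x_1-x_2\rangle.
\end{equation*}

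Next I would apply monotonicity of $A$ (see \eqref{eq:Monotone}) to drop the first term, and uniform monotonicity of $B$ (see \eqref{eq:UniformMonotoneMu}) to bound the second from below by $\mu(\|x_1-x_2\|)=\|x_1-x_2\|\,\widetilde{\mu}(\|x_1-x_2\|)$. Combined with the duality bound $\langle w_1-w_2,x_1-x_2\rangle\leq \|w_1-w_2\|_*\,\|x_1-x_2\|$, this yields
\begin{equation*}
\|x_1-x_2\|\,\widetilde{\mu}(\|x_1-x_2\|)\leq \|w_1-w_2\|_*\,\|x_1-x_2\|.
\end{equation*}

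Now I would split into the two natural cases. If $x_1=x_2$, then $\|x_1-x_2\|=0\leq \widetilde{\mu}^{-1}(\|w_1-w_2\|_*)$ since $\widetilde{\mu}^{-1}$ maps into $[0,\infty)$ (the property $\widetilde{\mu}^{-1}(0)=0$ recorded in \eqref{eq:mu(0)=0} makes the bound tight when $w_1=w_2$ as well). If $x_1\neq x_2$, divide by $\|x_1-x_2\|>0$ to obtain $\widetilde{\mu}(\|x_1-x_2\|)\leq \|w_1-w_2\|_*$, and then apply $\widetilde{\mu}^{-1}$, which is increasing because $\widetilde{\mu}$ is increasing and invertible onto $[0,\infty)$; this delivers \eqref{eq:tilde mu}.

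Finally, for the power type conclusion \eqref{eq:hHolderContinuity}, I would observe that $\mu(t)=\beta t^\rho=t\cdot\beta t^{\rho-1}$ forces $\widetilde{\mu}(t)=\beta t^{\rho-1}$, whose inverse is $\widetilde{\mu}^{-1}(s)=(s/\beta)^{1/(\rho-1)}$, and substitute into \eqref{eq:tilde mu}. I do not anticipate a genuine obstacle here: the only mildly delicate point is the legitimacy of dividing by $\|x_1-x_2\|$, which is why I separate the trivial case $x_1=x_2$; everything else is a direct chain of inequalities.
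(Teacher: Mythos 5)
Your proposal is correct and follows essentially the same route as the paper's proof: the same decomposition $w_i=a_i+b_i$ with $a_i\in Ax_i$, $b_i\in Bx_i$, the same chaining of the monotonicity of $A$ with the uniform monotonicity of $B$ and the duality bound, the same case split on $x_1=x_2$ before dividing, and the same substitution $\widetilde{\mu}^{-1}(s)=(s/\beta)^{1/(\rho-1)}$ for the power-type case. No gaps.
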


\begin{proof}
Let $w_1,w_2\in X^*$. The assertion is trivial (void) if either $(A+B)^{-1}w_1=\emptyset$ or   $(A+B)^{-1}w_2=\emptyset$. Hence from now on we assume that $(A+B)^{-1}w_1\neq\emptyset$, $(A+B)^{-1}w_2\neq\emptyset$. Let $x_1\in (A+B)^{-1}w_1$ and $x_2\in (A+B)^{-1}w_2$. If $x_1=x_2$, then \beqref{eq:tilde mu} holds by \beqref{eq:mu(0)=0}. Assume from now on that $x_1\neq x_2$. By the definition of the inverse operator we have $w_1\in (A+B)x_1$ and $w_2\in (A+B)x_2$. Thus $(A+B)x_1$ and $(A+B)x_2$ are nonempty. Since  $(A+B)x_1=Ax_1+Bx_1$ and since, by definition, a sum of two sets is nonempty if and only if both sets are nonempty, we have  $Ax_1\neq \emptyset$, $Bx_1\neq \emptyset$, $Ax_2\neq \emptyset$, $Bx_2\neq \emptyset$. Let 
$a_1\in Ax_1$, $b_1\in Bx_1$, $a_2\in Ax_2$, $b_2\in Bx_2$ satisfy $w_1=a_1+b_1$, $w_2=a_2+b_2$. From these equalities, the monotonicity of $A$ and \beqref{eq:UniformMonotoneMu}, we have 
\begin{multline}\label{eq:h(x,x')>=beta|w-w'|}
\langle w_1-w_2,x_1-x_2\rangle=\langle a_1+b_1-a_2-b_2,x_1-x_2\rangle\\
=\langle a_1-a_2,x_1-x_2\rangle+ \langle b_1-b_2,x_1-x_2\rangle\geq 0+ \mu(\|x_1-x_2\|).
\end{multline}
Since $\|w_1-w_2\|\|x_1-x_2\|\geq \langle w_1-w_2,x_1-x_2\rangle$, by the definition of the norm in $X^*$, it follows from \beqref{eq:h(x,x')>=beta|w-w'|} that $\|w_1-w_2\|\|x_1-x_2\|\geq \mu(\|x_1-x_2\|)$. Since $x_1\neq x_2$ and $\mu(t)=t\wt{\mu}(t)$ for all $t\geq 0$, the fact that $\widetilde{\mu}^{-1}$ exists and is increasing implies \beqref{eq:tilde mu}. Finally, when $\mu$ is  uniformly monotone of power type $\rho>1$, then $\widetilde{\mu}(t)=\beta t^{\rho-1}$ and $\widetilde{\mu}^{-1}(t)=(t/\beta)^{1/(\rho-1)}$ for each $t\in [0,\infty)$. Hence \beqref{eq:tilde mu} implies \beqref{eq:hHolderContinuity}. 
\end{proof}

\begin{cor}\label{cor:ResolventContinuous}
Under the assumptions of Lemma \bref{lem:ResolventContinuous}, if, in addition, $(A+B)^{-1}$ is single-valued, then $(A+B)^{-1}$ is continuous. In particular, $(\nabla f+\lambda A)^{-1}$ is H\"older continuous with an exponent $1/(\rho-1)$ under the following   slight strengthening of the assumptions of Lemma  \bref{lem:ResolventOrdinaryFunction}:  $(X,\|\cdot\|)$ is a real reflexive Banach space, $A$ is maximally monotone, $\lambda>0$, and $f:X\to\R$ is fully Legendre and has the property that $\nabla f$ is uniformly monotone of power type $\rho>1$; moreover, if, in addition, $\nabla f$ is continuous, then $\Res^f_{\lambda A}$ is continuous. 
\end{cor}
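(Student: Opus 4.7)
The plan is to reduce Corollary \bref{cor:ResolventContinuous} entirely to the quantitative estimates already proved in Lemma \bref{lem:ResolventContinuous}, together with one elementary observation about the pre-modulus $\wt{\mu}$. First I would note that by hypothesis $\wt{\mu}:[0,\infty)\to[0,\infty)$ is increasing and invertible, hence a monotone bijection of $[0,\infty)$ onto itself; combined with \beqref{eq:mu(0)=0} this forces $\wt{\mu}^{-1}$ to be an increasing bijection of $[0,\infty)$ with $\wt{\mu}^{-1}(0)=0$, and in particular $\wt{\mu}^{-1}$ is continuous at $0$ (any monotone bijection between intervals is continuous). This will be the workhorse used to translate a bound into a continuity statement.

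Next, to prove the first assertion, I would fix an arbitrary $w\in X^*$ with $x:=(A+B)^{-1}(w)\in X$ (well defined by single-valuedness) and consider an arbitrary sequence $w_n\to w$ in $X^*$ with $x_n:=(A+B)^{-1}(w_n)$. Applying inequality \beqref{eq:tilde mu} of Lemma \bref{lem:ResolventContinuous} to the pairs $(w_n,x_n)$ and $(w,x)$ yields $\|x_n-x\|\leq \wt{\mu}^{-1}(\|w_n-w\|)$, and letting $n\to\infty$ together with the continuity of $\wt{\mu}^{-1}$ at $0$ gives $x_n\to x$. Since $w$ was arbitrary, this shows $(A+B)^{-1}$ is continuous on $X^*$.

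For the claim about the protoresolvent, I would apply Lemma \bref{lem:ResolventContinuous} with the roles played by $\lambda A$ (which is monotone since $\lambda>0$ and $A$ is maximally monotone) and by $B:=\nabla f$, which by hypothesis is uniformly monotone of power type $\rho>1$. Lemma \bref{lem:ResolventOrdinaryFunction}\beqref{item:SingleValued} guarantees that $(\nabla f+\lambda A)^{-1}$ is single-valued with full domain $X^*$, so the previous paragraph applies and the quantitative bound \beqref{eq:hHolderContinuity} specializes to
\begin{equation*}
\|(\nabla f+\lambda A)^{-1}(w_1)-(\nabla f+\lambda A)^{-1}(w_2)\|\leq \left(\frac{\|w_1-w_2\|_*}{\beta}\right)^{\frac{1}{\rho-1}},
\end{equation*}
which is precisely H\"older continuity with exponent $1/(\rho-1)$.

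Finally, for the continuity of $\Res^f_{\lambda A}$, I would invoke Definition \bref{def:Resolvent}, which gives $\Res^f_{\lambda A}(x)=(\nabla f+\lambda A)^{-1}(\nabla f(x))$, exhibiting $\Res^f_{\lambda A}$ as the composition of $\nabla f$ (continuous by the added assumption) with $(\nabla f+\lambda A)^{-1}$ (continuous by what was just established); the composition is therefore continuous on $X$. I do not anticipate a real obstacle: the only step that is not a one-line invocation of a previous result is checking that $\wt{\mu}^{-1}$ is continuous at $0$, and that follows from the general fact that a monotone bijection between real intervals is automatically continuous.
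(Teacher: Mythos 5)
Your proposal is correct and follows essentially the same route as the paper: the paper likewise deduces the first assertion from Lemma \bref{lem:ResolventContinuous} together with the continuity of $\wt{\mu}^{-1}$ (being one-dimensional, increasing and invertible, with $\wt{\mu}^{-1}(0)=0$ by \beqref{eq:mu(0)=0}), obtains the H\"older continuity of $(\nabla f+\lambda A)^{-1}$ by taking $B:=\nabla f$, replacing $A$ with $\lambda A$ and invoking Lemma \bref{lem:ResolventOrdinaryFunction} plus \beqref{eq:hHolderContinuity}, and gets the continuity of $\Res^f_{\lambda A}$ from \beqref{eq:Res^f_A} as a composition. Your write-up merely makes explicit the sequential argument and the justification for the continuity of $\wt{\mu}^{-1}$ that the paper states in passing.
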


\begin{proof}
The first assertion follows from Lemma \bref{lem:ResolventContinuous} because  $(A+B)^{-1}$ is single-valued and $\wt{\mu}^{-1}$ is continuous (since it is one-dimensional, increasing and invertible) and satisfies \beqref{eq:mu(0)=0}. The second assertion follows from the first one by using Lemma  \bref{lem:ResolventOrdinaryFunction}, replacing $A$ with $\lambda A$, taking $B:=\nabla f$, and using \beqref{eq:hHolderContinuity}. The assertion regarding $\Res^f_{\lambda A}$ follows from the second assertion and \beqref{eq:Res^f_A}. 
\end{proof}

\begin{expl}\label{ex:FiniteDimension}
Suppose that the assumptions of  Proposition \bref{prop:PrincipleResolvent} hold where $X$ is $\R^m$ with the Euclidean norm (or any other norm), $m\in\N$. Lemma  \bref{lem:ResolventOrdinaryFunction} implies that $(\nabla f +\lambda A)^{-1}$ is maximally monotone and single-valued. Hence we can use \cite[Corollary 2, p.  166]{BurachikIusemSvaiter1997jour}  or \cite[Theorem 12.63(c), p. 568]{RockafellarWets1998book} to conclude that $(\nabla f +\lambda A)^{-1}$ is continuous. 
\end{expl}

\begin{expl}\label{ex:HilbertContinuous}
Suppose that the assumptions of  Proposition \bref{prop:PrincipleResolvent} hold in the case where  $(X,\|\cdot\|)$ is a Hilbert space and $f=\frac{1}{2}\|\cdot\|^2$. Thus $\nabla f=I$ and hence $\nabla f$ is strongly monotone (Definition \bref{def:ExtendStronglyMonotone}). Hence Corollary \bref{cor:ResolventContinuous} generalizes the well-known fact that in a Hilbert space the operator $(I+\lambda A)^{-1}$ is  nonexpansive \cite[Proposition  1(c)]{Rockafellar1976jour}, \cite[Corollary 23.9, p. 396]{BauschkeCombettes2017book}. More generally, when $f(x):=\frac{1}{2}\langle Bx,x\rangle$, $x\in X$, where $B:X\to X$ is a symmetric, continuous, invertible and strongly monotone linear operator, then $f$ is fully Legendre (a conclusion which follows from Example \bref{ex:PositiveDefinite} because $B$ must be positive definite) and $B=\nabla f$. Therefore from Corollary \bref{cor:ResolventContinuous} we conclude that  $(B+\lambda A)^{-1}$ is Lipschitz continuous. In particular, $(B+\lambda A)^{-1}$ is Lipschitz continuous if $X$ is finite dimensional and $B$ is a positive definite (thus symmetric) linear operator because then $\langle Bx,x\rangle\geq \beta\|x\|^2$, where $\beta:=\inf\{\langle Bx,x\rangle: x\in X,\|x\|=1\}$ [because in this case   $\beta=\langle Bx_0,x_0\rangle$ for some $x_0$ belonging to the unit sphere by the compactness of the sphere (since the space is finite-dimensional), so the fact that $B$ is positive definite implies that $\langle Bx_0,x_0\rangle>0$; in addition, the finite dimensionality of the space implies that $B$ is continuous; since $B$ is positive definite, it is one-to-one and hence the finite dimensionality implies that $B$ is also invertible].  
\end{expl}

\begin{expl}\label{ex:PowerNorm}
Suppose that the assumptions of  Proposition \bref{prop:PrincipleResolvent} hold where  $(X,\|\cdot\|)$ is a Banach space which is smooth and has a modulus of convexity of power  type $\rho>1$, namely, there exists $\beta>0$ and $\rho>1$ such that  modulus of convexity $\delta_X(\epsilon):=\inf\{1-0.5\|x_1+x_2\|: x_1,x_2\in X, \|x_1\|=\|x_2\|=1, \|x_1-x_2\|\geq\epsilon\}$,  $\epsilon\in [0,2]$, satisfies $\delta_X(\epsilon)\geq \beta\epsilon^{\rho}$ for all $\epsilon\in [0,2]$. Well-known examples of spaces having this property are the $L_p[0,1]$ and $\ell_p$ spaces, $p\in (1,\infty)$, where in this case $\rho=\max\{2,p\}$ (details about the power type $\rho$ property can be found in  \cite[pp. 63, 81]{LindenstraussTzafriri1979book} and \cite[p. 69]{Diestel1975book}; smoothness is, of course, just a consequence of the well-known facts that $X^*$ is isometric to, respectively,  $L_q[0,1]$ or $\ell_q$ for $q\in (1,\infty)$ satisfying $(1/p)+(1/q)=1$, and the fact that a Banach space is uniformly convex if and only if its dual is uniformly smooth \cite[Theorem 3.7.9, p. 236]{Zalinescu2002book}). As follows from \cite[p. 258]{McCarthy1967jour} or \cite[Theorem 2.2]{Tomczak-Jaegermann1974jour}, the moduli of convexity of the $c_p$ spaces, $p\in (1,\infty)$ (denoted by $S_p$ in  \cite{Tomczak-Jaegermann1974jour}) are also of power type  $\rho=\max\{2,p\}$. 

Since $\rho>0$ and $\delta_X$ is of power type $\rho$, it follows that $\delta_X(\epsilon)>0$ whenever $\epsilon>0$, and so $X$ is uniformly convex and hence reflexive \cite[pp. 76-78]{Brezis2011book}. Now let $f:X\to\R$ be defined by $f(x):=(1/\rho)\|x\|^{\rho}$ for each $x\in X$. Then $f$ is fully Legendre (Example \bref{ex:PowerRho} above). In addition, both $f$ and $f^*$ are smooth \cite[Lemma 6.2]{BauschkeBorweinCombettes2001jour}. Since, as is well known \cite[Theorem 5.37, p. 77]{VanTiel1984book}, the subgradient of a G\^ateaux differentiable convex function coincides with  the singleton containing the gradient of the function, it follows that $B:=\partial f=\{\nabla f\}$, and by the usual abuse of notation $B=\nabla f$. On the other hand, according to \cite[p. 194]{XuRoach1991jour} and the fact that $X$ is reflexive, we have $\partial f=J_{\rho}$ where $J_{\rho}$ is the duality mapping with gauge function $t\mapsto t^{\rho-1}$, $t\in[0,\infty)$. By \cite[p. 194 and Theorem 1(ii), p. 195]{XuRoach1991jour}, there exists $\alpha>0$ such that for all $y_1,y_2\in X$
\begin{equation}\label{eq:delta_X}
\langle By_1-By_2,y_1-y_2\rangle\geq\alpha\left(\max\{\|y_1\|,\|y_2\|\}\right)^{\displaystyle{\rho}}
\delta_X\left(\frac{\|y_1-y_2\|}{2\max\{\|y_1\|,\|y_2\|\}}\right). 
\end{equation}
(Here and in \cite{XuRoach1991jour} one should assume that $\|y_1\|\neq 0$ or $\|y_2\|\neq 0$; when $\|y_1\|=0=\|y_2\|$, then we define the right-hand side to be 0 so that \beqref{eq:delta_X} is satisfied in this case too.) Since we assume that $\delta_X$ is of power type $\rho$, it follows from \beqref{eq:delta_X}  that there exists $\beta>0$ such that 
\begin{equation}\label{eq:delta_Xp_type}
\langle By_1-By_2,y_1-y_2\rangle\geq \frac{\alpha\beta}{2^{\rho}}\|y_1-y_2\|^{\rho},\quad 
 \forall y_1,y_2\in X.
\end{equation}
Therefore Corollary \bref{cor:ResolventContinuous} ensures that $(\nabla f+\lambda A)^{-1}$ is H\"older continuous with exponent $1/(\rho-1)$. We note that the particular case where $A\equiv 0$, $f=\frac{1}{2}\|\cdot\|^2$ and both $\delta_X$ and $\delta_{X^*}$ are of power type $2$, is implicit in \cite[Proof of Proposition 3.2]{BruckReich1977jour}. 
\end{expl}

\section{Well-definedness of Eckstein \cite[Algorithm (10) and Theorem 1]{Eckstein1998jour}}\label{sec:Eckstein}
The paper \cite{Eckstein1998jour} discusses an inexact version of the proximal point algorithm in $X=\R^m$  with  the Euclidean norm, where $m\in\N$ is fixed and where the iterations are based on a general Bregman function $f$ and a maximally monotone operator $A$. The goal of this  algorithmic scheme is to find a zero of $A$ (as a matter of fact, the setting in \cite{Eckstein1998jour} is a bit different, but  it coincides with the one discussed here because we consider Bregman functions the effective domain of which is $X$; see Remark \bref{rem:EcksteinSetting} below). This scheme is defined as follows: 
\begin{equation}
x_0\in X\,\,\textnormal{is arbitrary},
\end{equation}
\begin{equation}\label{eq:Eckstein}
\eta_{n+1}+\nabla f(x_n)\in \lambda_nA(x_{n+1})+\nabla f(x_{n+1}),\quad \forall n\in \N\cup\{0\}.
\end{equation}
Here $(\lambda_n)_{n=0}^{\infty}$ is a sequence of positive numbers and $(\eta_n)_{n=1}^{\infty}$ are arbitrary vectors in $X$ which are regarded as being the error terms (as we have already observed, frequently these error terms are unknown to the users: for instance, they may appear during the iterative process due to computational errors, one may be able to evaluate only their magnitude, and so on). The function $f$ is  assumed to be a Bregman function. In the context of \cite{Eckstein1998jour} this means that $f$ satisfies \cite[Conditions B1-B7]{Eckstein1998jour}. Thus $f$ is  strictly convex and continuously differentiable in $X$ and $L(x,\alpha):=\{y\in X: D_f(x,y)\leq\alpha\}$ is bounded for all $x\in X$ and $\alpha\in\R$, where $D_f$ is the Bregman distance associated with $f$ as defined in \beqref{eq:D_f}. 
Under these conditions, the assumption that $(x_n)_{n=1}^{\infty}$ is well defined, the assumption that $A^{-1}(0)\neq\emptyset$, and the assumptions that $\sum_{n=1}^{\infty}\|\eta_n\|<\infty$ and $\sum_{n=1}^{\infty}\langle \eta_n,x_n\rangle$ exists and is finite,  it is shown in \cite[Theorem 1]{Eckstein1998jour} that $(x_n)_{n=0}^{\infty}$ converges to a zero of $A$. 

In the text that precedes the formulation of \cite[Theorem 1]{Eckstein1998jour}, namely in \cite[the beginning of Section 3]{Eckstein1998jour}, there is a limited discussion regarding the issue of existence of a sequence $(x_n)_{n=0}^{\infty}$ satisfying \beqref{eq:Eckstein}. Indeed, only in the case where all the error terms are equal to zero a sufficient condition was presented to ensure the existence of  $(x_n)_{n=0}^{\infty}$ (in our context, since we assume that $f$ is defined on $X$ and is finite there, this condition reduces to the assumption that $\nabla f$ maps $X$ onto $X$). The case where one or more of the error terms are not equal to zero has not been considered. In Theorem \bref{thm:Eckstein} below we show that when $f$ is fully Legendre (an assumption which implies, according to Remark \bref{rem:FullyLegendreFIniteDim} below, that $f$ is a Bregman function) then Eckstein's algorithm is well defined for arbitrary initial points and arbitrary error terms. 
\begin{thm}\label{thm:Eckstein}
Assume that $f:X\to\R$ is fully Legendre. Then for each $x_0\in X$, each sequence $(\lambda_n)_{n=0}^{\infty}$ of positive numbers and each sequence $(\eta_n)_{n=1}^{\infty}$ of vectors in $X$ there exists a unique sequence $(x_n)_{n=1}^{\infty}$ of elements in $X$ such that \beqref{eq:Eckstein} holds. Moreover, $x_{n+1}=(\nabla f+\lambda_{n}A)^{-1}(\eta_{n+1}+\nabla f(x_n))$ for all $n\in\N\cup \{0\}$.
\end{thm}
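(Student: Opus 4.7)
The plan is to observe that Eckstein's iteration, rewritten in a slightly normalized form, is nothing but the inexact resolvent inclusion problem from Section \bref{sec:PrincipleResolvent} applied repeatedly, and then invoke Proposition \bref{prop:PrincipleResolvent} at each step by induction. The overall structure is thus: (i) rewrite \beqref{eq:Eckstein}; (ii) verify the hypotheses of Proposition \bref{prop:PrincipleResolvent}; (iii) run the induction and read off the closed-form update.

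First I would fix $n\in \N\cup\{0\}$ and suppose inductively that $x_n\in X$ has already been constructed (the base case $n=0$ being the given $x_0$). The inclusion \beqref{eq:Eckstein} can be rearranged as
\begin{equation*}
\frac{\eta_{n+1}}{\lambda_n}\in A(x_{n+1})+\frac{1}{\lambda_n}\bigl(\nabla f(x_{n+1})-\nabla f(x_n)\bigr),
\end{equation*}
which has exactly the form \beqref{eq:PerturbedResolventProblem} with the roles played by $x:=x_n$, $y:=x_{n+1}$, $\lambda:=\lambda_n>0$, and $\eta:=\eta_{n+1}/\lambda_n\in X^*$ (recall that $X=\R^m$ is identified with its dual in the ambient setting of \cite{Eckstein1998jour}, and that $A$ is maximally monotone as assumed throughout the section).

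Since $f$ is fully Legendre on the reflexive space $X$ by hypothesis, Proposition \bref{prop:PrincipleResolvent} applies and yields a unique $x_{n+1}\in X$ satisfying the above inclusion, together with the explicit representation \beqref{eq:y_explicit}. Substituting the specific $\eta$ and $\lambda$ used here gives
\begin{equation*}
x_{n+1}=(\nabla f+\lambda_n A)^{-1}\!\left(\lambda_n\cdot \frac{\eta_{n+1}}{\lambda_n}+\nabla f(x_n)\right)=(\nabla f+\lambda_n A)^{-1}\bigl(\eta_{n+1}+\nabla f(x_n)\bigr),
\end{equation*}
which is precisely the displayed formula in the theorem. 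Uniqueness of $x_{n+1}$ given $x_n$ is also inherited from Proposition \bref{prop:PrincipleResolvent}, and then a routine induction on $n$ delivers the uniqueness of the whole sequence $(x_n)_{n=1}^{\infty}$.

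There is no real obstacle here beyond the notational translation between the two formulations; the heavy lifting (single-valuedness of $(\nabla f+\lambda_n A)^{-1}$, its domain being all of $X^*$, and existence/uniqueness in the inexact resolvent inclusion problem) has already been carried out in Lemma \bref{lem:ResolventOrdinaryFunction} and Proposition \bref{prop:PrincipleResolvent}. The only minor point worth checking explicitly is that $\lambda_n>0$, so that division by $\lambda_n$ is legitimate and the identification above is valid; this is given.
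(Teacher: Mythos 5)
Your proof is correct and follows essentially the same route as the paper's: rewrite \beqref{eq:Eckstein} as $\eta_{n+1}/\lambda_n\in Ax_{n+1}+(1/\lambda_n)(\nabla f(x_{n+1})-\nabla f(x_n))$ and apply Proposition \bref{prop:PrincipleResolvent} together with induction. The closed-form expression you obtain for $x_{n+1}$ matches the one in the paper's proof.
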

\begin{proof}
A simple verification shows that  \beqref{eq:Eckstein} holds if and only if for each $n\in \N\cup\{0\}$ we have $\eta_{n+1}/\lambda_{n}\in Ax_{n+1}+(1/\lambda_{n})(\nabla f(x_{n+1})-\nabla f(x_n))$. Proposition \bref{prop:PrincipleResolvent} and induction imply that for all $n\in\N\cup\{0\}$ there exists a unique $x_{n+1}\in X$ which satisfies  \beqref{eq:Eckstein},  and, actually $x_{n+1}=(\nabla f+\lambda_n A)^{-1}(\lambda_{n}(\eta_{n+1}/\lambda_{n})+\nabla f(x_n))$. 
\end{proof}

\begin{remark}\label{rem:EcksteinSetting}
As we briefly mentioned above, the setting in \cite{Eckstein1998jour} is a bit different from what we assumed above, in fact it is more general,  because $f$ is defined on $X$ but it may attain any value in $(-\infty,\infty]$ in such a way that its effective domain is a convex subset of $X$ with a  nonempty interior $S$; in addition, \cite[Conditions  B1--B7]{Eckstein1998jour} should hold for points in $S$ or in the closure of $\dom(f)$; additional assumptions on $A$ are imposed, namely one condition from  \cite[Conditions  A1-A3]{Eckstein1998jour} and also the condition that the intersection of the relative interior of $\dom(A)$ and $S$ is nonempty. When all of these conditions hold, together with the ones mentioned after \beqref{eq:Eckstein}, and, in addition,  it is assumed that $\hat{A}:=A+N_{\overline{\dom(f)}}$ has a zero, then \cite[Theorem 1]{Eckstein1998jour} implies that $(x_n)_{n=0}^{\infty}$ converges to a zero of $\hat{A}$. Here $N_{\cl{\dom(f)}}$ is the normal cone operator with respect to $\cl{\dom(f)}$, where, for all nonempty subset $C$ of $X$ and all $x\in X$, if $x\in C$, then  $N_C(x):=\{z\in X: \langle z,w-x\rangle\leq 0,\,\forall w\in C\}$ and if $x\notin C$, then $N_C(x):=\emptyset$. In our case $C=X$, and a simple verification shows that $N_X(x)=0$ for all $x\in X$. Thus $\hat{A}=A$ and \cite[Theorem 1]{Eckstein1998jour}  implies that $(x_n)_{n=0}^{\infty}$ converges to a zero of $A$.  
\end{remark}

\section{Well-definedness of Reich-Sabach \cite[Algorithm (4.1) and Theorem 4.1]{ReichSabach2010b-jour} }
\label{sec:ReichSabach2010b4.1} 
The paper \cite{ReichSabach2010b-jour} introduces in  \cite[Algorithm  (4.1)]{ReichSabach2010b-jour} an algorithmic scheme the goal of which is to find a common zero of a finite family of maximally monotone operators in an arbitrary real reflexive Banach space. Following the notation of  \cite{ReichSabach2010b-jour}, we now present their scheme: 
\begin{subequations}\label{eq:4.1}
\begin{align}
%\left\{\begin{array}{lll}
x_0&\in X,\\
\eta^i_n&=\xi^i_n+\displaystyle{\frac{1}{\lambda^i_n}}\left(\nabla f(y^i_n)-\nabla f(x_n)\right),\quad \xi^i_n\in A_i(y^i_n),\\
w^i_n&=\nabla f^*\left(\lambda^i_n\eta^i_n+\nabla f(x_n)\right),\\
C^i_n&=\{z\in X: D_f(z,y^i_n)\leq D_f(z,w^i_n)\},\\
C_n:&=\cap_{i=1}^N C^i_n,\\
Q_n&=\{z\in X: \langle\nabla f(x_0)-\nabla f(x_n),z-x_n\rangle \leq 0\},\\
\,x_{n+1}&=\textnormal{proj}^f_{C_n\cap Q_n}(x_0), \quad n=0,1,2,\ldots.
%\end{array}\right.
%\end{equation}
\end{align}
\end{subequations}
Here $n\geq 0$ is an integer, $N$ is a fixed natural number, $i$ is a natural number in $\{1,\ldots, N\}$, and  $\lambda^i_n>0$ for each such $n$ and $i$. For all such $i\in \{1,\ldots, N\}$, the operator  $A_i$ is a  maximally monotone operator from $X$ to $2^{X^*}$. 
We also assume that the common zero set of the operators is nonempty, that is,
\begin{equation}\label{eq:ZeroSet}
Z:=\bigcap_{i=1}^N A_i^{-1}(0)\neq\emptyset. 
\end{equation}
The function $f:X\to\R$ has the property that on each nonempty convex and bounded subset of $X$  it ($f$) is assumed to be bounded, uniformly Fr\'echet differentiable, and totally convex on bounded subsets of $X$, where total convexity at a point $x\in X$ means that 
\begin{equation}\label{eq:TotalConvexity}
\nu_f(x,t):=\inf\{D_f(y,x): y\in X, \|y-x\|=t\}>0,\quad\,\forall x\in X,\forall t>0,
\end{equation}
and total convexity on bounded subsets of $X$ (called sequential consistency in \cite[p.  65]{ButnariuIusem2000book}) means that for each bounded subset $E\subset X$, 
\begin{equation}\label{eq:TotalConvexityBounded}
\nu_f(E,t):=\inf\{\nu_f(x,t): x\in E\}>0,\quad\forall t>0.
\end{equation}
A totally convex function must be strictly convex as follows from \cite[Proposition 1.2.6(i), p. 27]{ButnariuIusem2000book}. As a matter of fact, in our case $f$  is even uniformly convex at each $x\in X$ because it is totally convex and Fr\'echet  differentiable \cite[Proposition 2.3, p. 38]{ButnariuIusemZalinescu2003jour}, but we do not need this stronger result. Here $D_f$ is the Bregman distance  (Bregman divergence) associated with the Bregman function $f$, that is,
\begin{equation}\label{eq:D_f}
D_f(y,x):=f(y)-f(x)-\langle\nabla f(x),y-x\rangle,\quad \forall\, x,y\in X.
\end{equation}
The convex conjugate $f^*$ is assumed to be bounded (hence finite) and uniformly Fr\'echet differentiable on bounded subsets of $X^*$. The expression $\textnormal{proj}^f_C(x)$ represents the (right) Bregman projection of $x\in X$ onto a nonempty, closed and convex subset $C$ of $X$ defined by  $\textnormal{proj}^f_C(x):=\textnormal{argmin}\{D_f(y,x): y\in C\}$.  This operator is well defined, that is, there exists a unique $y(x,C)\in C$ such that $D_f(y(x,C),x)=\inf\{D_f(y,x): y\in C)\}$; see \cite[Corollary 7.9]{BauschkeBorweinCombettes2001jour}.  As explained in Remark \bref{rem:ReichSabachFullyLegendre}  below, the above assumptions imply that $f$ must be fully Legendre (in particular, there is no need to assume in advance that $f$ is Legendre). 

 Under the assumption that \beqref{eq:4.1} is well defined and that  $\liminf_{n\to\infty}\lambda^i_n>0$ and $\lim_{n\to\infty}\eta^i_n=0$ for each $i\in\{1,\ldots,N\}$, it was shown in the proof of \cite[Theorem 4.1]{ReichSabach2010b-jour} that $(x_n)_{n=0}^{\infty}$  converges strongly to  $\textnormal{proj}^f_Z(x_0)$. The sequences  $(\eta^i_n)_{n=0}^{\infty}$, $i\in \{1,\ldots,N\}$ were regarded as the error terms and in the formulation and proof of \cite[Theorem 4.1]{ReichSabach2010b-jour} it was shown that \beqref{eq:4.1} is well defined only under the assumption that all of these error terms are equal to zero. In fact,  in \cite[Algorithm (4.1)]{ReichSabach2010b-jour} and in \beqref{eq:4.1} there is a slight ambiguity regarding some parameters (for instance, whether the $\eta^i_n$ can be arbitrary or perhaps they should be defined in terms of other parameters).
 
In Theorem \bref{thm:RS2010bTheorem4.1} below we show that \beqref{eq:4.1} is well defined for arbitrary $x_0\in X$ and arbitrary error terms $\eta^i_n\in X^*$, $i\in\{1,\ldots,N\}$, $n\in\N\cup\{0\}$. This theorem also clarifies how the various parameters presented in \beqref{eq:4.1} should be handled by the users (for example, $y^i_n$ and $\xi^i_n$ should satisfy \beqref{eq:4.1Improved-y} and \beqref{eq:4.1Improved-xi} below, respectively, for each $i\in \{1,\ldots,N\}$ and each nonnegative  integer $n$). Its proof is based on Proposition \bref{prop:PrincipleResolvent} and the simple but important observation that \beqref{eq:4.1} is actually a system of conditions (mainly equations and inclusions) on a tuple of unknowns, a system which may have one solution, multiple solutions, or may not have any solution at all. 
 
\begin{thm}\label{thm:RS2010bTheorem4.1}
Let $x_0\in X$ be arbitrary. Then for each $i\in \{1,\ldots,N\}$, each nonnegative integer $n$, each $\eta^i_n\in X^*$, and each $\lambda^i_n>0$ there exist unique  $w^i_n\in X$, $y^i_n\in X$, $\xi^i_n\in X^*$, $C^i_n\subseteq X$, $C_n\subseteq X$, $Q_n\subseteq X$, and $x_{n+1}\in X$ which satisfy \beqref{eq:4.1}. Moreover, these parameters satisfy the following system of conditions:
\begin{subequations}\label{eq:4.1Improved}
%\left\{
\begin{align}
w^i_n&=\nabla f^*\left(\lambda^i_n\eta^i_n+\nabla f(x_n)\right),\label{eq:4.1Improved-w}\\
y^i_n&=(\lambda^i_n A_i+\nabla f)^{-1}(\lambda^i_n\eta^i_n+\nabla f(x_n)),\label{eq:4.1Improved-y}\\
\xi^i_n&=\eta^i_n-\frac{1}{\lambda^i_n}(\nabla f(y^i_n)-\nabla f(x_n)),\label{eq:4.1Improved-xi}\\
\xi^i_n&\in A_i(y^i_n),\label{eq:4.1-xiInA_i}\\
C^i_n&=\{z\in X: D_f(z,y^i_n)\leq D_f(z,w^i_n)\},\label{eq:4.1Improved-C_i}\\
C_n&=\cap_{i=1}^N C^i_n,\label{eq:4.1Improved-C}\\
Q_n&=\{z\in X: \langle\nabla f(x_0)-\nabla f(x_n),z-x_n\rangle \leq 0\},\label{eq:4.1Improved-Q}\\
C_n&\cap Q_n\,\,\textnormal{is convex, closed, and contains}\,Z,\label{eq:4.1Improved-C_nQ_n}\\
\,\,\,\,\,\,x_{n+1}&=\textnormal{proj}^f_{C_n\cap Q_n}(x_0).\label{eq:4.1Improved-x}
\end{align}
%\right.
\end{subequations}
\end{thm}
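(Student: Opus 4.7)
The plan is to argue by induction on $n \in \N \cup \{0\}$, constructing the tuple of parameters in the order they depend on $x_n$. The base case is trivial since $x_0$ is given. For the inductive step, assuming $x_n$ has been uniquely determined, I aim to show that all of $w^i_n$, $y^i_n$, $\xi^i_n$ (for $i \in \{1, \ldots, N\}$), $C^i_n$, $C_n$, $Q_n$, and $x_{n+1}$ are uniquely determined, and that the augmented system \beqref{eq:4.1Improved} holds. The standing assumptions on $f$ and $f^*$ (finiteness and G\^ateaux differentiability on $X$ and $X^*$ respectively) coincide with $f$ being fully Legendre in the sense of Definition \bref{def:FullyLegendre}, as noted in the text preceding the theorem; consequently Lemma \bref{lem:f^*f^-1} applies and $\nabla f^* = (\nabla f)^{-1}$ is single-valued on $X^*$.

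The explicit quantities follow quickly. The formula \beqref{eq:4.1Improved-w} determines $w^i_n$ uniquely. Applying Proposition \bref{prop:PrincipleResolvent} with operator $A_i$, base point $x_n$, perturbation $\eta^i_n$, and scalar $\lambda^i_n$ gives the unique pair $(y^i_n, \xi^i_n) \in X \times X^*$ satisfying the second line of \beqref{eq:4.1} together with $\xi^i_n \in A_i(y^i_n)$; the explicit representations \beqref{eq:4.1Improved-y} and \beqref{eq:4.1Improved-xi} are exactly \beqref{eq:PrincipleExplicit}. The sets $C^i_n$, $C_n$, and $Q_n$ are then uniquely determined by \beqref{eq:4.1Improved-C_i}--\beqref{eq:4.1Improved-Q}. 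Closedness and convexity of $C^i_n$ are immediate: expanding \beqref{eq:D_f} shows that the $f(z)$ terms cancel, so the defining inequality is affine in $z$, making $C^i_n$ a closed half-space (or all of $X$); $Q_n$ is manifestly a closed half-space. Hence $C_n \cap Q_n$ is closed and convex, and once nonemptiness is shown, the Bregman projection in \beqref{eq:4.1Improved-x} is well defined and produces a unique $x_{n+1}$ by the cited result on projections onto nonempty closed convex sets.

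The main obstacle, and the only nontrivial step, is to show $Z \subseteq C_n \cap Q_n$ for every $n$; this simultaneously supplies nonemptiness and establishes \beqref{eq:4.1Improved-C_nQ_n}. The inclusion $Z \subseteq Q_n$ is inductive: the case $n=0$ is trivial since $Q_0 = X$, and for $n \geq 1$ the optimality condition for the Bregman projection $x_n = \textnormal{proj}^f_{C_{n-1} \cap Q_{n-1}}(x_0)$ gives $\langle \nabla f(x_0) - \nabla f(x_n), z - x_n \rangle \leq 0$ for every $z \in C_{n-1} \cap Q_{n-1}$, and by the inductive hypothesis this contains $Z$. The inclusion $Z \subseteq C^i_n$ is where monotonicity of $A_i$ enters: for $z \in Z$ one has $0 \in A_i(z)$, so $\xi^i_n \in A_i(y^i_n)$ yields $\langle \xi^i_n, y^i_n - z \rangle \geq 0$. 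Combining this with the identity $\nabla f(w^i_n) - \nabla f(y^i_n) = \lambda^i_n \xi^i_n$, which follows directly from \beqref{eq:4.1Improved-w} and \beqref{eq:4.1Improved-xi}, a routine three-point expansion of the Bregman distance rewrites $D_f(z, y^i_n) - D_f(z, w^i_n)$ as $-D_f(y^i_n, w^i_n) - \lambda^i_n \langle \xi^i_n, y^i_n - z \rangle$, which is $\leq 0$ since both terms are non-negative. This yields $z \in C^i_n$ and closes the induction.
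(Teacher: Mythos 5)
Your proof is correct and follows essentially the same route as the paper's: induction on $n$, an appeal to Proposition \bref{prop:PrincipleResolvent} for the unique pair $(y^i_n,\xi^i_n)$, the observation that each $C^i_n$ is cut out by an affine inequality in $z$ (hence is a closed halfspace, all of $X$, or empty, with emptiness excluded once $Z\subseteq C^i_n$ is shown), and the variational characterization of the Bregman projection to propagate $Z\subseteq Q_n$. The only difference is that where the paper invokes \cite[Proposition 2.8]{ReichSabach2010b-jour} as a black box to obtain $D_f(z,y^i_n)\leq D_f(z,w^i_n)$ for $z\in Z$, you prove this inline from the identity $\nabla f(w^i_n)-\nabla f(y^i_n)=\lambda^i_n\xi^i_n$, the three-point expansion of $D_f$, and the monotonicity of $A_i$ applied to $\xi^i_n\in A_i(y^i_n)$ and $0\in A_i(z)$ --- a correct, self-contained substitute for the cited result.
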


\begin{proof}
We apply induction on $n$. Let $n=0$ and fix $i\in\{1,\ldots,N\}$. The existence and uniqueness of the $w^i_0$ satisfying the third line of \beqref{eq:4.1} (and hence  \beqref{eq:4.1Improved-w}) is immediate because it is expressed in unique way in terms of $x_0$, $\eta^i_0$ and $\lambda^i_0$. Proposition \bref{prop:PrincipleResolvent} implies that there exists a unique pair $(y^i_0,\xi^i_0)\in X\times X^*$ such that the second line of \beqref{eq:4.1} holds, and moreover, according to this proposition, \beqref{eq:4.1Improved-y}--\beqref{eq:4.1-xiInA_i} hold. The existence and uniqueness of the $C^i_0$ satisfying the fourth line of \beqref{eq:4.1}  (and therefore satisfying \beqref{eq:4.1Improved-C_i}) is immediate since it is expressed in a unique way using $y^i_0$ and $w^i_0$ (which have just been derived). This is true for each $i\in\{1\ldots,N\}$. Since $x_0$ is known, the existence and uniqueness of the $C_0$ and $Q_0$ satisfying the fifth and sixth lines of \beqref{eq:4.1} respectively (and thus satisfying    \beqref{eq:4.1Improved-C} and \beqref{eq:4.1Improved-Q}, respectively) is immediate. 

It remains to prove that \beqref{eq:4.1Improved-C_nQ_n} holds and that there exists a unique  $x_1$ satisfying the seventh line of \beqref{eq:4.1}, that is, \beqref{eq:4.1Improved-x}. We first show that \beqref{eq:4.1Improved-C_nQ_n} holds. Once this is done, we can use the fact mentioned after \beqref{eq:D_f} that the Bregman projection of a point in $X$ on a nonempty, closed and convex subset of $X$ exists and is unique and hence $x_1$ is well-defined. The definition of $D_f$ in \beqref{eq:D_f} and the definition of $C^i_0$ imply that for each $i\in\{1\ldots,N\}$,
\begin{multline*}
C^i_0=\{z\in X: f(z)-f(y^i_0)-\langle \nabla f(y^i_0), z-y^i_0\rangle\leq 
f(z)-f(w^i_0)-\langle \nabla f(w^i_0),z-w^i_0\rangle\}\\
=\{z\in X: \langle \nabla f(w^i_0)-\nabla f(y^i_0),z\rangle\leq f(y^i_0)-f(w^i_0)-\langle \nabla f(y^i_0),y^i_0\rangle+\langle \nabla f(w^i_0),w^i_0\rangle\}.
\end{multline*}
Hence if $\nabla f(w^i_0)\neq \nabla f(y^i_0)$, then $C^i_0$ is a closed halfspace. Otherwise, either $C^i_0=X$ or $C^i_0=\emptyset$. We claim that the second possibility cannot be  satisfied. Indeed, since $y^i_0=\Res^f_{\lambda^i_0 A_i}(w^i_0)$ as follows from \beqref{eq:Res^f_A},  \beqref{eq:4.1Improved-w}, \beqref{eq:4.1Improved-y} and Lemma \bref{lem:f^*f^-1},  we can use  \cite[Proposition 2.8]{ReichSabach2010b-jour} from which it follows that $D_f(u,y^i_0)=D_f(u,\Res^f_{\lambda^i_0 A_i}(w^i_0))\leq D_f(u,w^i_0)$ holds for each $u$ in the common zero set $Z$ from \beqref{eq:ZeroSet}. This and \beqref{eq:4.1Improved-C_i}  imply that $u\in C^i_0$ for each $i\in\{1,\ldots,N\}$. Thus $C^i_0\neq \emptyset$ for every $i\in\{1\ldots,N\}$ and actually $Z\subseteq C_0=\cap_{i=1}^N C^i_0$. Finally, $Q_0=X$ and hence obviously $Z\subseteq Q_0$. It follows that 
$C_0\cap Q_0$ is an intersection of sets which are either closed halfspaces or the whole space and hence $C_0\cap Q_0$ is closed and convex and it contains $Z$. 

So far we have proved the assertion for the case $n=0$. Now we can increment $n$ and use induction on it by repeating the above reasoning in the induction step. The only difference is that $Q_n$ will usually be a halfspace and not the whole space, and so the inclusion $Z\subseteq C_n\cap Q_n$ is not immediate. However, we do have $Z\subseteq C_n\cap Q_n$. Indeed, in the induction step we can prove that $Z\subseteq C_n$ in a similar way to the proof that the inclusion $Z\subseteq C_0$ was proved in the previous paragraph. Since from the induction hypothesis $K:=C_{n-1}\cap Q_{n-1}$ is closed and convex and $Z\subseteq K$, we can use \cite[Proposition  2.6(i),(ii)]{ReichSabach2010b-jour} and the fact that $x_n=\textnormal{proj}^f_{K}(x_0)\in K$ to conclude that $x_n$ satisfies the variational  inequality $\langle \nabla f(x_0)-\nabla f(x_n),u-x_n\rangle \leq 0$ for all $u\in K$ and in particular for all $u\in Z$. We conclude from \beqref{eq:4.1Improved-Q} that  $Z\subseteq Q_n$.
 Consequently, $Z\subseteq C_n\cap Q_n$, as claimed. As a final remark we note that $C_n\cap Q_n$ is closed and convex because it is an intersection of nonempty sets which are either closed halfspaces or the whole space. 
\end{proof}

\begin{remark}
We take this opportunity to correct a misprint in \cite{ReichSabach2010b-jour}: 
the expression $H_n\cap W_n$ in \cite[Algorithm (4.4), p. 35]{ReichSabach2010b-jour} should be replaced by $C_n\cap Q_n$. 
\end{remark}

\section{Well-definedness of Solodov-Svaiter \cite[Algorithm 1.1 and Theorems 2.2, 2.4]{SolodovSvaiter1999-1jour}}\label{sec:SolodovSvaiter1999-1}

The paper \cite{SolodovSvaiter1999-1jour} discusses an inexact version of the proximal point algorithm in a Hilbert space $X$. The goal of the corresponding inexact algorithmic scheme \cite[Algorithm  1.1]{SolodovSvaiter1999-1jour} is to find a zero of a maximally monotone operator $A:X\to 2^X$ assuming that $A^{-1}(0)\neq\emptyset$. Here is the scheme:  

\begin{alg}\label{alg:SolodovSvaiter1999-1}%$\,$\\
{\bf\noindent Initialization:} Choose an arbitrary $x_0\in X$, an arbitrary $\sigma\in [0,1)$,  and an arbitrary sequence of positive numbers $(\mu_n)_{n=0}^{\infty}$.\\

{\bf \noindent Iterative step:} Given $n\in\N\cup\{0\}$ and $x_n\in X$, find $(y_n,\xi_n,\eta_n)\in X^3$ satisfying the following conditions:
\begin{subequations}\label{eq:SolodovSvaiter1999}
\begin{align}
\xi_n&\in A(y_n)\label{eq:xi_n in A}\\
0&=\xi_n+\mu_n(y_n-x_n)+\eta_n,\label{eq:0=xi_n+}\\
\|\eta_n\|&\leq \sigma \max\{\|\xi_n\|,\mu_n\|y_n-x_n\|\}\label{eq:eta_n<= SolodovSvaiter1999}.
\end{align}
\end{subequations}
If $\xi_n=0$ or $y_n=x_n$, then stop. Otherwise let 
\begin{equation}\label{eq: x_n+1 SolodovSvaiter1999-1}
x_{n+1}:=x_n-\frac{\langle \xi_n,x_n-y_n\rangle}{\|\xi_n\|^2}\xi_n.
\end{equation}
\end{alg}

In order for this algorithm to be well defined, the existence of solutions $(y_n,\xi_n,\eta_n)$ to \beqref{eq:SolodovSvaiter1999} should be established. In \cite{SolodovSvaiter1999-1jour} only the case where $\sigma=0$ was discussed 
\cite[pp. 61-62]{SolodovSvaiter1999-1jour} and it was written that in this case the algorithmic scheme reduces to the exact case ($\eta_n=0$), namely, to the classical exact resolvent inclusion problem (\beqref{eq:PerturbedResolventProblem} in which $X$ is a real Hilbert space, $\eta=0$ and $f=\frac{1}{2}\|\cdot\|^2$) which is known to have a unique solution. In  fact, if one denotes $y_n:=y$, $\xi_n:=\xi$ and $\eta_n:=0$ where $(y,\xi)$ is the unique solution to \beqref{eq:PrincipleSystem} when $\eta=0$, then the triplet $(y_n,\xi_n,\eta_n)$ solves \beqref{eq:SolodovSvaiter1999} even if $\sigma>0$. However, it is not clear from  \cite{SolodovSvaiter1999-1jour} whether there exist solutions $(y_n,\xi_n,\eta_n)$ to \beqref{eq:SolodovSvaiter1999} such that $\eta_n\neq 0$, namely solutions which are to be expected in real-world scenarios. Anyway, under the assumption that $\sup\{\mu_n: n\in\N\}<\infty$ and that there exist sequences $(x_n)_{n=0}^{\infty}$ satisfying 
\beqref{eq:SolodovSvaiter1999}--\beqref{eq: x_n+1 SolodovSvaiter1999-1}, it was shown in \cite[Theorem 2.2]{SolodovSvaiter1999-1jour} that any such sequence converges weakly to a zero of $A$. Under further assumptions it was shown in \cite[Theorem 2.4]{SolodovSvaiter1999-1jour} that these sequences converge strongly to a zero  of $A$. 

The following theorem shows that Algorithm \bref{alg:SolodovSvaiter1999-1} is well defined even if $\sigma\geq 1$ (it is, however, an open problem whether the generated sequence converges to a zero of $A$, since the analysis in \cite{SolodovSvaiter1999-1jour} depends on the assumption that $\sigma\in [0,1)$). Moreover, if $\sigma>0$, then either $x_n$ is a zero of $A$ or \beqref{eq:SolodovSvaiter1999} has strictly inexact solutions. 
 
\begin{thm}\label{thm:SolodovSvaiter1999-1}
There exist sequences which satisfy Algorithm \bref{alg:SolodovSvaiter1999-1} in the exact and inexact cases, even if $\sigma\geq 1$. More precisely, for all $n\in \N\cup\{0\}$, if Algorithm \bref{alg:SolodovSvaiter1999-1} generates $x_n$ (namely, it does not terminate before iteration $n$), then at least one of the following possibilities holds:
\begin{enumerate}[(i)]
\item\label{item:x_n is zero of A} $x_n$ is a zero of $A$. In this case the triplet  $(y_n,\xi_n,\eta_n):=(x_n,0,0)$ satisfies  \beqref{eq:SolodovSvaiter1999}. 
\item $\sigma=0$. In this case there exists a unique triplet $(y_n,\xi_n,\eta_n)\in X^3$ such that \beqref{eq:SolodovSvaiter1999} holds, namely $((I+(1/\mu_n)A)^{-1}(x_n),-\mu_n(y_n-x_n),0)$.
\item $x_n$ is not a zero of $A$ and $\sigma>0$. In this case there exists $r_n>0$ such that for each $\eta_n\in X$ satisfying $\|\eta_n\|<r_n$ there exists a unique pair  $(y_n,\xi_n)\in X^2$ such that \beqref{eq:SolodovSvaiter1999} holds. In fact, 
\begin{subequations}\label{eq:y_n xi_n}
\begin{align}
y_n&=\left(I+\frac{1}{\mu_n}A\right)^{-1}\left(x_n-\frac{1}{\mu_n}\eta_n\right),\\
\xi_n&=-\eta_n-\mu_n(y_n-x_n).
\end{align}
\end{subequations}
\end{enumerate} 
 Furthermore, if $\sigma\in [0,1)$ and for some $n\in\N\cup\{0\}$ the algorithm generates $x_n$ but terminates before generating $x_{n+1}$, then $x_n$ is a zero of $A$.
\end{thm}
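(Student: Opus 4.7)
My plan is to recognize that Algorithm \bref{alg:SolodovSvaiter1999-1} is the Hilbert-space specialization of the inexact resolvent inclusion problem with $f=\frac{1}{2}\|\cdot\|^2$, so $\nabla f=I$, and $\lambda=1/\mu_n$. Substituting the abstract error $\eta:=-\eta_n$ into \beqref{eq:PrincipleSystem} recovers exactly \beqref{eq:xi_n in A}--\beqref{eq:0=xi_n+}. Consequently, once $\eta_n\in X$ is fixed, Proposition \bref{prop:PrincipleResolvent} already guarantees that there is a unique pair $(y_n,\xi_n)$ satisfying \beqref{eq:xi_n in A}--\beqref{eq:0=xi_n+}, and this pair is given explicitly by \beqref{eq:y_n xi_n}. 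The remaining condition \beqref{eq:eta_n<= SolodovSvaiter1999} is then a strongly implicit constraint tying $\eta_n$ to this canonical pair, so it fits the framework of Proposition \bref{prop:ImplicitInexactness}.

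Case \beqref{item:x_n is zero of A} is immediate: when $0\in A(x_n)$ the triplet $(x_n,0,0)$ trivially satisfies every line of \beqref{eq:SolodovSvaiter1999}. In the case $\sigma=0$, inequality \beqref{eq:eta_n<= SolodovSvaiter1999} forces $\eta_n=0$, after which Proposition \bref{prop:PrincipleResolvent} supplies the unique corresponding $(y_n,\xi_n)$. For the substantive case (iii), I apply Proposition \bref{prop:ImplicitInexactness} with $U:=X$, $\Phi(\eta,\xi,x,y):=\|\eta\|$, and $\Psi(\eta,\xi,x,y):=\sigma\max\{\|\xi\|,\mu_n\|y-x\|\}$. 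At the null error one has $\wt{y}(0)=(I+(1/\mu_n)A)^{-1}(x_n)$, and the hypothesis that $x_n$ is not a zero of $A$ is equivalent to $\wt{y}(0)\neq x_n$. Hence $\phi(0)=0<\sigma\mu_n\|\wt{y}(0)-x_n\|=\psi(0)$, so $\theta(0)>0$. Continuity of $\wt{y}$ at $0$ is the classical nonexpansivity of the Hilbert-space resolvent, a special instance of Example \bref{ex:HilbertContinuous} obtained through Corollary \bref{cor:ResolventContinuous}; continuity of $\wt{\xi}$ then follows from its explicit expression \beqref{eq:xi_explicit}. Thus $\phi$ and $\psi$ are continuous at $0$, and Proposition \bref{prop:ImplicitInexactness} furnishes the desired $r_n>0$; the strict inequality $\phi(\eta_n)<\psi(\eta_n)$ automatically implies the non-strict one in \beqref{eq:eta_n<= SolodovSvaiter1999}, and uniqueness of $(y_n,\xi_n)$ is inherited from Proposition \bref{prop:PrincipleResolvent}.

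For the concluding statement, suppose $\sigma\in[0,1)$ and termination occurs at step $n$, so either $\xi_n=0$ or $y_n=x_n$. If $\xi_n=0$, then \beqref{eq:0=xi_n+} gives $\eta_n=-\mu_n(y_n-x_n)$, and \beqref{eq:eta_n<= SolodovSvaiter1999} becomes $\mu_n\|y_n-x_n\|\leq\sigma\mu_n\|y_n-x_n\|$; since $\sigma<1$ and $\mu_n>0$ this forces $y_n=x_n$, whence $0=\xi_n\in A(x_n)$. If instead $y_n=x_n$, then $\eta_n=-\xi_n$ and \beqref{eq:eta_n<= SolodovSvaiter1999} reduces to $\|\xi_n\|\leq\sigma\|\xi_n\|$, forcing $\xi_n=0$, and again $0\in A(x_n)$. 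The only real hazard in the whole argument is keeping the sign conventions straight when identifying $\eta_n$ with the perturbation in Proposition \bref{prop:PrincipleResolvent}; beyond this bookkeeping I do not anticipate any genuine obstacle, since the continuity framework of Section \bref{sec:Continuity} and the explicit representation \beqref{eq:PrincipleExplicit} do all the heavy lifting.
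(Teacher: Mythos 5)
Your proposal is correct and follows essentially the same route as the paper's own proof: case (i) by direct verification, case (ii) via Proposition \bref{prop:PrincipleResolvent} after noting $\sigma=0$ forces $\eta_n=0$, case (iii) via Proposition \bref{prop:ImplicitInexactness} with the same choices of $\Phi$ and $\Psi$ and the continuity of $(I+(1/\mu_n)A)^{-1}$ from Example \bref{ex:HilbertContinuous}, and the same termination argument; even the sign identification $\eta=-\eta_n$ matches the paper's bookkeeping.
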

\begin{proof}
If $x_n$ is a zero of $A$, then $(y_n,\xi_n,\eta_n):=(x_n,0,0)$ satisfies \beqref{eq:SolodovSvaiter1999} as a simple verification shows. 

Now suppose that $\sigma=0$ (this possibility may coincide with the previous one). If some $(y_n,\xi_n,\eta_n)\in X^3$ satisfies \beqref{eq:SolodovSvaiter1999}, then \beqref{eq:eta_n<= SolodovSvaiter1999} and $\sigma=0$ imply that $\eta_n=0$. Denote $f(w):=\frac{1}{2}\|w\|^2$ for all $w\in X$.  Then $f$ is fully Legendre and $\nabla f=I$ (Examples \bref{ex:PowerRho} or \bref{ex:PositiveDefinite} above). By Proposition \bref{prop:PrincipleResolvent} (with $x=x_n$, $y_n=y$, and  $\lambda=1/\mu_n$) we conclude that  $y_n=(I+(1/\mu_n)A)^{-1}(x_n)$ and $\xi_n=-\mu_n(y_n-x_n)$. Therefore any solution to \beqref{eq:SolodovSvaiter1999} must coincide with $((I+(1/\mu_n)A)^{-1}(x_n),-\mu_n(y_n-x_n),0)$. On the other hand, Proposition \bref{prop:PrincipleResolvent} (again with $x=x_n$ and $\lambda=1/\mu_n$)  ensures that the triplet $(y_n,\xi_n,\eta_n):=((I+ (1/\mu_n)A)^{-1}(x_n),-\mu_n(y_n-x_n),0)$ does solve \beqref{eq:SolodovSvaiter1999}. 

It remains to consider the last possibility, namely,  
$0\notin Ax_n$ and $\sigma>0$. As before, set $f(w):=\frac{1}{2}\|w\|^2$ for all $w\in X$.  Denote  $\Phi(\eta,\xi,x,y):=\|\eta\|$ and $\Psi(\eta,\xi,x,y):=\sigma \max\{\|\xi\|,\mu_n\|y-x\|\}$ for all $(\eta,\xi,x,y)\in X^4$. These are continuous functions. 
Since $(I+(1/\mu_n)A)^{-1}$ is continuous (Example \bref{ex:HilbertContinuous} above), the functions $\phi:X\to\R$ and $\psi:X\to \R$ defined in \beqref{eq:y xi phi psi} with $U:=X$, $x:=x_n$, $\lambda:=1/\mu_n$ are continuous. In addition, $x_n\neq (I+(1/\mu_n)A)^{-1}(x_n)$ because the equality  $x_n=(I+(1/\mu_n)A)^{-1}(x_n)$ implies by the definition of the inverse operator that $x_n\in (I+(1/\mu_n)A)(x_n)=x_n+(1/\mu_n)A(x_n)$. Hence $0\in A(x_n)$, that is, $x_n$ is a zero of $A$, a contradiction. Therefore  $\phi(0)=0<\sigma\mu_n\|(I+(1/\mu_n)A)^{-1}(x_n)-x_n\|\leq \psi(0)$. Thus all the conditions mentioned in Proposition \bref{prop:ImplicitInexactness} are satisfied (here $\eta_n=-\eta$, $y_n=y$, $\xi_n=\xi$, $x=x_n$, $r_n=r$, $\lambda=1/\mu_n$) and there exists $r_n>0$ such that for all $\eta_n\in X$ satisfying $\|\eta_n\|<r_n$, there exists a unique pair $(y_{n},\xi_n)$ such that \beqref{eq:SolodovSvaiter1999} holds. 

Finally, suppose that $\sigma\in [0,1)$ and for some $n\in\N\cup\{0\}$ the algorithm generates $x_n$ but terminates before generating $x_{n+1}$. We know from previous lines that \beqref{eq:SolodovSvaiter1999} has solutions. Let $(y_n,\xi_n,\eta_n)\in X^3$ be such a solution. Since Algorithm \bref{alg:SolodovSvaiter1999-1} terminates, its definition implies that either $x_n=y_n$ or $\xi_n=0$. This condition and \beqref{eq:0=xi_n+} imply that $\|\eta_n\|=\|\xi_n\|$ or $\|\eta_n\|=\mu_n\|x_n-y_n\|$, and hence from \beqref{eq:eta_n<= SolodovSvaiter1999} we have $\|\eta_n\|\leq \sigma\|\eta_n\|$. Since $\sigma\in [0,1)$ it follows that $\|\eta_n\|=0$ and hence $\eta_n=0$. Therefore from \beqref{eq:0=xi_n+} we have $x_n=y_n$ and $\xi_n=0$. We conclude from \beqref{eq:xi_n in A} that $0\in A(x_n)$, as required. 
\end{proof}

\begin{remark}\label{rem:x_n is zero of A}
In the formulation of Theorem \bref{thm:SolodovSvaiter1999-1} there appears (in Case \beqref{item:x_n is zero of A}) the condition that $x_n$ is a zero of $A$. It is worthwhile  saying a few words regarding possible ways to check whether this condition holds. First, if one is able to evaluate the set $Ax_n$ and is able to check membership of elements in this set, then one  can check directly whether $0\in Ax_n$. Alternatively, one can fix an error parameter  $\epsilon_n>0$ in advance and then check whether the distance between $0$ and $Ax_n$ is less than $\epsilon_n$. If this latter condition holds, then one can regard $x_n$ as an approximate zero and terminate the algorithm.  Another way to check whether $x_n$ is a zero of $A$ is to fix $\lambda>0$ and then to consider the  equality $x_n=(I+\lambda A)^{-1}(x_n)$ which is an equality  between two elements in $X$. As can be verified directly (and was shown in the proof of Theorem \bref{thm:SolodovSvaiter1999-1}),  this equality is equivalent to the condition that $0\in Ax_n$. If $(I+\lambda A)^{-1}(x_n)$ can be evaluated, then the above-mentioned equality can be checked. If $(I+\lambda A)^{-1}(x_n)$ can be evaluated only approximately (as is common in practical scenarios), then one can fix an error parameter  $\epsilon_n>0$ in advance and then can  check whether the inequality $\|x_n-(I+\lambda A)^{-1}(x_n)\|<\epsilon_n$ holds. If this inequality holds, then $x_n$ can be regarded as an approximate zero of $A$ and we can stop the algorithm.
\end{remark}

\section{Well-definedness of Iusem, Pennanen and Svaiter \cite[Method 1, Theorem 3]{IusemPennanenSvaiter2003jour} }\label{sec:IusemPennanenSvaiter2003jour}

The paper \cite{IusemPennanenSvaiter2003jour} discusses several inexact versions of the proximal  point algorithm. The setting is a real Hilbert space $X$ and operators satisfying various monotonicity  or non-monotonicity assumptions. One of the algorithmic schemes discussed there is \cite[Method 1]{IusemPennanenSvaiter2003jour}, which is aimed at finding a zero of a maximally monotone operator $A:X\to 2^X$ assuming that $A$ has at least one zero. The scheme is defined as follows: \\

\begin{alg}\label{alg:IusemPennanenSvaiter2003jour} {\bf\noindent Initialization:} Choose an arbitrary $x_0\in X$, an arbitrary $\sigma\in [0,1)$, an arbitrary sequence of positive numbers $(\lambda_n)_{n=0}^{\infty}$ satisfying $\widehat{\lambda}:=\inf\{\lambda_n: n\in\N\}>0$, a certain positive number $\rho \in (0,\widehat{\lambda}/2)$, and define 
\begin{equation}\label{eq:nu}
\nu:=\frac{\sqrt{\sigma+(1-\sigma)\left(\displaystyle{\frac{2\rho}{\widehat{\lambda}}}\right)^2}-\displaystyle{\frac{2\rho}{\widehat{\lambda}}}}{1+\displaystyle{\frac{2\rho}{\widehat{\lambda}}}}.
\end{equation}
In addition, fix a linear subspace $Z$ in $X$.\\

{\bf \noindent Iterative step:} Given $n\in\N\cup\{0\}$ and $x_n$, find $y_n\in X$ and $\eta_n\in X$ satisfying the following conditions:
\begin{subequations}\label{eq:IusemPennanenSvaiter2003jour}
\begin{align}
\eta_n&\in (\lambda_n A(y_n)+y_n-x_n)\cap Z,\label{eq:eta in A(y_n)capZ}\\
\,\,\,\|\eta_n\|&\leq \nu \|y_n-x_n\|\label{eq:eta_n<=IPS2003}.
\end{align}
\end{subequations}
Define 
\begin{equation}\label{eq:x_n+1 IPS2003}
x_{n+1}:=y_n-\eta_n.
\end{equation}
\end{alg}

In order for the algorithm to be well defined, one should prove the existence of solutions $(y_n,\eta_n)$ to \beqref{eq:IusemPennanenSvaiter2003jour}. In \cite{IusemPennanenSvaiter2003jour} only the case of exact solutions ($\eta_n=0$) was discussed 
(in \cite[p. 1086]{IusemPennanenSvaiter2003jour}, \cite[p. 1088 (Remark 2, proof of Corollary 1)]{IusemPennanenSvaiter2003jour} and \cite[p. 1092 (proof of Corollary 3)]{IusemPennanenSvaiter2003jour}; an implicit discussion appears also in \cite[p. 1095 (above   Theorem 3)]{IusemPennanenSvaiter2003jour}). Under the assumption that there exist sequences  $(x_n)_{n=0}^{\infty}$ satisfying \beqref{eq:IusemPennanenSvaiter2003jour}--\beqref{eq:x_n+1 IPS2003} and under further assumptions, it was shown in \cite[Theorem 3(b)]{IusemPennanenSvaiter2003jour} that each such sequence $(x_n)_{n=0}^{\infty}$ converges weakly to a zero of $A$. 

We note that \cite[Method 1, pp. 1094-1095]{IusemPennanenSvaiter2003jour} is a reformulation of  \cite[Algorithm 2, pp. 1082-1083]{IusemPennanenSvaiter2003jour}. There is a slight ambiguity regarding the value of $\rho$, since in \cite[Algorithm 2]{IusemPennanenSvaiter2003jour} this value is related to a certain monotonicity assumption associated with $A$. (The issue is as follows: Both $A$ and $A^{-1}$ should be maximally  $\rho$-hypomonotone for some  $\rho\in (0,\widehat{\lambda}/2)$; this assumption is needed for the convergence analysis as can be seen in \cite[Lemma 1 and its proof (pp. 1086-1088)]{IusemPennanenSvaiter2003jour} and other results in \cite{IusemPennanenSvaiter2003jour} based on this lemma; however, in \cite[Method 1]{IusemPennanenSvaiter2003jour} $A$ is  assumed to be maximally monotone; while this implies that $A^{-1}$ is maximally monotone and thus it is also  maximally $\rho$-hypomonotone  for all arbitrary small $\rho>0$, the exact value of $\rho$ to be used in $\nu$ from \beqref{eq:nu} is  not mentioned.) Anyway, Theorem \bref{thm:IusemPennanenSvaiter2003jour} below shows that $\nu$ can be an arbitrary nonnegative number, independently of $\sigma$ and $\rho$, and usually  there is some freedom in the value of the inexact solution $(y_n,\eta_n)$. As a result, if, in  particular, we want $\nu$ to be defined by \beqref{eq:nu}, then we can take any $\sigma\in [0,\infty)$ and any $\rho \in [0,\widehat{\lambda}/2]$. It is, however, an open problem whether the sequence $(x_n)_{n=0}^{\infty}$ converges weakly to a  zero of $A$ when $\nu$ is not assumed to satisfy \beqref{eq:nu} or when it satisfies \beqref{eq:nu} but $\sigma\geq 1$, since the convergence analysis in  \cite{IusemPennanenSvaiter2003jour} depends on \beqref{eq:nu} and also on the assumption that $\sigma\in [0,1)$. 
 
\begin{thm}\label{thm:IusemPennanenSvaiter2003jour}
Consider Algorithm \bref{alg:IusemPennanenSvaiter2003jour} with any initialization,  including the case of arbitrary $\nu,\sigma\in [0,\infty)$. Then there exist sequences which satisfy this algorithm in the exact and inexact cases. More precisely, given $n\in \N\cup\{0\}$ and $x_n\in X$, at least  one of the following possibilities holds:
\begin{enumerate}[(i)]
\item $x_n$ is a zero of $A$. In this case $(y_n,\eta_n):=(x_n,0)$ satisfies \beqref{eq:IusemPennanenSvaiter2003jour};
\item $\nu=0$. In this case there exists a unique pair  $(\eta_n,y_n)\in Z\times X$ such that \beqref{eq:IusemPennanenSvaiter2003jour} holds, namely  $(y_n,\eta_n):=((I+\lambda_n A)^{-1}(x_n),0)$.
\item $x_n$ is not a zero of $A$ and $\nu>0$. In this case there exists $r_n>0$ such that for each $\eta_n\in Z$ satisfying $\|\eta_n\|<r_n$ there exists a unique $y_n\in X$ such that \beqref{eq:IusemPennanenSvaiter2003jour} holds. Furthermore, 
\begin{equation}\label{eq:y_nIPS2003}
y_n=(I+\lambda_nA)^{-1}\left(x_n+\eta_n\right).
\end{equation}
\end{enumerate} 
\end{thm}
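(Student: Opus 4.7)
I would set $f(w) := \tfrac{1}{2}\|w\|^2$, which is fully Legendre on the Hilbert space $X$ by Example \bref{ex:PowerRho} (taking $\rho = 2$), so $\nabla f = I$; Lemma \bref{lem:ResolventOrdinaryFunction} then makes the protoresolvent $(I + \lambda_n A)^{-1}$ single-valued on $X$, and Example \bref{ex:HilbertContinuous} makes it (Lipschitz) continuous. I would then dispatch the three cases separately. Case (i) is an immediate verification: with $(y_n, \eta_n) := (x_n, 0)$ one has $\lambda_n A(y_n) + y_n - x_n = \lambda_n A(x_n) \ni 0$, and $0 \in Z$ since $Z$ is a linear subspace, so \beqref{eq:eta in A(y_n)capZ} holds, and \beqref{eq:eta_n<=IPS2003} is trivial. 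Case (ii) is almost as short: $\nu = 0$ combined with \beqref{eq:eta_n<=IPS2003} forces $\eta_n = 0$, whereupon \beqref{eq:eta in A(y_n)capZ} reduces to $y_n \in (I + \lambda_n A)^{-1}(x_n)$, which Proposition \bref{prop:PrincipleResolvent} (with $x := x_n$, $\eta := 0$, $\lambda := \lambda_n$) determines uniquely.

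The core of the argument is Case (iii). Given any $\eta_n \in Z$, dividing the inclusion $\eta_n \in \lambda_n A(y_n) + y_n - x_n$ by $\lambda_n$ places it in the form of \beqref{eq:PerturbedResolventProblem} with $\eta := \eta_n/\lambda_n$, and Proposition \bref{prop:PrincipleResolvent} identifies the unique $y_n = (I + \lambda_n A)^{-1}(\lambda_n\cdot\eta_n/\lambda_n + x_n) = (I + \lambda_n A)^{-1}(x_n + \eta_n)$, which is \beqref{eq:y_nIPS2003}. To arrange that \beqref{eq:eta_n<=IPS2003} holds for all sufficiently small $\eta_n$, I would invoke Proposition \bref{prop:ImplicitInexactness} with $U := X^*$, $\Phi(\eta, \xi, x, y) := \|\lambda_n \eta\|$, and $\Psi(\eta, \xi, x, y) := \nu \|y - x\|$. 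Both are continuous, so continuity of $(I + \lambda_n A)^{-1}$ transfers to the functions $\phi, \psi$ of \beqref{eq:y xi phi psi}. At $\eta = 0$ one has $\phi(0) = 0$ and $\psi(0) = \nu \|(I + \lambda_n A)^{-1}(x_n) - x_n\|$; if the latter vanished then $x_n = (I + \lambda_n A)^{-1}(x_n)$ would give $x_n \in x_n + \lambda_n A(x_n)$ and so $0 \in A(x_n)$, contradicting the standing hypothesis, hence $\psi(0) > \phi(0)$. Proposition \bref{prop:ImplicitInexactness} therefore produces $r > 0$ such that \beqref{eq:StronglyImplicitResolventInclusion} has a unique solution whenever $\|\eta\| < r$, with the strict inequality $\|\lambda_n \eta\| < \nu\|y - x\|$. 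Setting $r_n := \lambda_n r$ and restricting to $\eta_n \in Z$ with $\|\eta_n\| < r_n$ (so $\eta := \eta_n/\lambda_n$ has norm $< r$) then gives the unique $y_n$ together with the strict inequality $\|\eta_n\| < \nu\|y_n - x_n\|$, which implies \beqref{eq:eta_n<=IPS2003}.

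The only technical point requiring care will be matching the scaling between the $\eta$ of Propositions \bref{prop:PrincipleResolvent}--\bref{prop:ImplicitInexactness} and the $\eta_n$ of \beqref{eq:IusemPennanenSvaiter2003jour}: this absorbs a factor $\lambda_n$ into the threshold $r_n$. The $Z$-membership constraint is handled by treating $\eta_n$ as a user-supplied input rather than as part of the strongly implicit fixed-point relation, so Proposition \bref{prop:ImplicitInexactness} applies verbatim and its conclusion is merely restricted to $\eta_n \in Z$. Beyond that, the argument is expected to parallel the structure of the proof of Theorem \bref{thm:SolodovSvaiter1999-1}, and no genuine obstacle is anticipated.
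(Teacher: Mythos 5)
Your proposal is correct and follows essentially the same route as the paper's own proof: the same three-case split, the same witnesses $(x_n,0)$ and $((I+\lambda_n A)^{-1}(x_n),0)$ in cases (i)--(ii), and the same application of Proposition \bref{prop:ImplicitInexactness} with $f=\frac{1}{2}\|\cdot\|^2$ and the continuity of $(I+\lambda_n A)^{-1}$ from Example \bref{ex:HilbertContinuous} in case (iii). The only difference is that you take $\Phi(\eta,\xi,x,y):=\|\lambda_n\eta\|$ where the paper takes $\Phi(\eta,\xi,x,y):=\|\eta\|$; your choice handles the rescaling $\eta_n=\lambda_n\eta$ slightly more cleanly, since it yields the inequality $\|\eta_n\|<\nu\|y_n-x_n\|$ directly rather than $\|\eta_n\|/\lambda_n<\nu\|y_n-x_n\|$.
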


\begin{proof}
If $x_n$ is a zero of $A$, then a simple verification shows that $(y_n,\eta_n):=(x_n,0)$ satisfies \beqref{eq:IusemPennanenSvaiter2003jour}. 

In the second possibility (which may not be disjoint from the first one) $\nu=0$. This assumption implies that if some $(y_n,\eta_n)\in X^2$ satisfies \beqref{eq:IusemPennanenSvaiter2003jour}, then  \beqref{eq:eta_n<=IPS2003} $\eta_n=0$ (in particular, $\eta_n\in Z$). Denote $f(w):=\frac{1}{2}\|w\|^2$ for all $w\in X$.  Then $f$ is fully Legendre and $\nabla f=I$ (Examples \bref{ex:PowerRho} or \bref{ex:PositiveDefinite} above). By Proposition \bref{prop:PrincipleResolvent} (with $x=x_n$ and $\eta=\eta_n/\lambda_n=0$) it follows that  $y_n=(I+\lambda_n A)^{-1}(x_n)$. Therefore any solution $(y_n,\eta_n)\in X^2$ of \beqref{eq:IusemPennanenSvaiter2003jour} must coincide with $((I+\lambda_n A)^{-1}(x_n),0)$. On the other hand, Proposition \bref{prop:PrincipleResolvent} ensures that $(y_n,\eta_n):=((I+\lambda_n A)^{-1}(x_n),0)$ does solve \beqref{eq:IusemPennanenSvaiter2003jour}. Since the pair $(y_n,\eta_n)$ exists, the right-hand side of \beqref{eq:x_n+1 IPS2003} and hence $x_{n+1}$ are well defined. 

In the third possibility $0\notin Ax_n$ and $\nu>0$. 
It must be that   $x_n\neq (I+\lambda_n A)^{-1}(x_n)$, because if 
$x_n=(I+\lambda_n A)^{-1}(x_n)$, then by the definition of the inverse operator it follows that $x_n\in (I+\lambda_n A)(x_n)=x_n+\lambda_n A(x_n)$, namely $0=\lambda_n \xi_n$ for some $\xi_n\in A(x_n)$. Because $\lambda_n\neq 0$ it follows that $\xi_n=0$ and hence $0\in A(x_n)$, that is, $x_n$ is a zero of $A$, a contradiction. Now define $f$ as above,  $\Phi(\eta,\xi,x,y):=\|\eta\|$ and $\Psi(\eta,\xi,x,y):=\nu\|y-x\|$ for all $(\eta,\xi,x,y)\in X^4$. These are continuous functions. Because $\lambda_n A$ is maximally monotone, the operator $(I+\lambda_n A)^{-1}$ is continuous (Example \bref{ex:HilbertContinuous} above). Hence the  functions $\phi:X\to\R$ and $\psi:X\to \R$ defined in \beqref{eq:y xi phi psi} with $x:=x_n$ are continuous. In addition, $\phi(0)=0<\nu\|(I+\lambda_n A)^{-1}(x_n)-x_n\|=\psi(0)$. 

Thus all the conditions mentioned in Proposition \bref{prop:ImplicitInexactness} are satisfied (with $x=x_n$ and $\lambda=\lambda_n$) and hence there exists $r>0$ such that for all $\eta\in X$ satisfying $\|\eta\|<r$, there exists a unique vector $y\in X$ such that \beqref{eq:StronglyImplicitResolventInclusion} holds. Since \beqref{eq:PerturbedResolventProblem}  is equivalent to $\lambda\eta\in \lambda A(y)+\nabla f(y)-\nabla f(x)$, if we denote $y_n:=y$, $\eta_n:=\lambda_n\eta$,  $\xi_n:=\xi$, $r_n:=\lambda_n r$ and observe that $\eta_n\in X$ satisfies $\|\eta_n\|<r_n$ if and only if $\|\eta\|<r$, we conclude from the previous discussion that for an arbitrary $\eta_n\in X$ which satisfies $\|\eta_n\|<r_n$, there exists a unique vector $y_n\in X$ such that  the relations $\eta_n\in \lambda_n A(y_n)+y_n-x_n$ and $\|\eta_n\|<\nu \|y_n-x_n\|$ are satisfied. By restricting $\eta_n$ to $Z$ we see that \beqref{eq:IusemPennanenSvaiter2003jour} holds. Since $(y_n,\eta_n)$ exists, the right-hand side of \beqref{eq:x_n+1 IPS2003} and hence $x_{n+1}$ are well defined. 
\end{proof}

\section{Well-definedness of Parente, Lotito and Solodov \cite[Algorithm 3.1, Theorems 4.2, 4.4]{ParenteLotitoSolodov2008jour}}\label{sec:ParenteLotitoSolodov2008}

The paper \cite{ParenteLotitoSolodov2008jour} discusses a variant of the proximal  point  algorithm in which the norm changes (via a positive definite matrix) at each iteration. The  setting is $X:=\R^m$, $m\in\N$, with the Euclidean norm $\|\cdot\|$ and the goal is to find a zero of a maximally monotone operator $A$, assuming that the zero set of $A$ is nonempty. The algorithmic scheme discussed there, namely, \cite[Algorithm 3.1]{ParenteLotitoSolodov2008jour}, makes uses of the notion of enlargements of set-valued operators, that is, for each $\epsilon\geq 0$, the $\epsilon$-enlargement $A^{\epsilon}$ of $A$ is defined as follows: 
\begin{equation}\label{eq:A^epsilon}
A^{\epsilon}(x):=\{y\in X: \langle y'-y,x'-x\rangle\geq -\epsilon,\,\forall x'\in X,\forall y'\in A(x')\},\quad \forall x\in X,
\end{equation}
where $\langle\cdot,\cdot\rangle$ is the standard inner product in $X$. Given a positive definite (hence symmetric) linear operator $M:X\to X$, we denote by 
 $\|\cdot\|_{M}$ the norm induced by $M$, namely,  $\|w\|_{M}:=\sqrt{\langle Mw,w\rangle}$, $w\in X$. The algorithm is defined as follows:
\begin{alg}\label{alg:PLS2008}%$\,$\\
{\bf\noindent Initialization:} Choose arbitrary $x_0\in X$, $\sigma\in (0,1)$, $c>0$, $\theta\in (0,1)$, and two positive numbers $\lambda_{\ell}<\lambda_u$. \\

{\bf \noindent Iterative step:} Given $n\in\N\cup\{0\}$, choose a positive definite linear operator $M_n:X\to X$ satisfying $\lambda_{\ell}\leq \lambda_{\min}(M_n)\leq\lambda_{\max}(M_n)\leq \lambda_{u}$, where $\lambda_{\min}(M_n)$ and $\lambda_{\max}(M_n)$ are the minimal and maximal eigenvalues of $M_n$, respectively. Choose $c_n\geq c$ and $\sigma_n\in [0,\sigma)$. Find $(y_n,\xi_n,\eta_n)\in X^3$ and $\epsilon_n\geq 0$ satisfying the following conditions:
\begin{subequations}\label{eq:PLS2008}
\begin{align}
\xi_n&\in A^{\epsilon_n}(y_n)\label{eq:xi_n in A^epsilon(y_n)}\\
\eta_n&=c_nM_n\xi_n+y_n-x_n,\label{eq:InexactResolventInclusion PLS2008}\\
\|\eta_n\|^2_{M_n^{-1}}+2c_n\epsilon_n&\leq\sigma_n^2(\|c_nM_n\xi_n\|^2_{M_n^{-1}}+\|y_n-x_n\|^2_{M_n^{-1}}).\label{eq:eta_n<=PLS2008}
\end{align}
\end{subequations}
Now, if $y_n=x_n$, then stop. Otherwise choose $\tau_n\in [1-\theta,1+\theta]$ and define
\begin{equation}\label{eq:x_n+1 PLS2008}
a_n:=\frac{\langle\xi_n,x_n-y_n\rangle-\epsilon_n}{\|M_n\xi_n\|^2_{M_n^{-1}}},\quad x_{n+1}:=x_n-\tau_na_nM_n\xi_n.
\end{equation}
\end{alg}

In order for the algorithm to be well defined, it should be proved that there exist solutions $(y_n,\xi_n,\eta_n,\epsilon_n)$ to \beqref{eq:PLS2008} and that $\xi_n\neq 0$ whenever $y_n\neq x_n$. In \cite{ParenteLotitoSolodov2008jour} only the case of exact solutions ($\epsilon_n=0$, $\eta_n=0$) was discussed \cite[p. 243]{ParenteLotitoSolodov2008jour} by saying that the problem reduces to the exact case when $\sigma_n=0$ (and then $y_{n+1}=(I+c_nM_nA)^{-1}y_n$ as noted in \cite[p. 241]{ParenteLotitoSolodov2008jour}). Actually, if one denotes $y_n:=y$, $\xi_n:=\xi$,  $\eta_n:=0$ and $\epsilon_n:=0$, where $(y,\xi)$ is the unique solution to \beqref{eq:PrincipleSystem} (there $\eta=0$, $\lambda=c_n$ and $f(w):=\frac{1}{2}\langle M_n^{-1}w,w\rangle$ for each $w\in X$), then the quartet $(y_n,\xi_n,\eta_n,\epsilon_n)$ solves \beqref{eq:PLS2008} even if $\sigma_n>0$. However, it is not clear from  \cite{ParenteLotitoSolodov2008jour} whether there exist solutions $(y_n,\xi_n,\eta_n,\epsilon_n)$ to \beqref{eq:PLS2008} such that either  $\eta_n\neq 0$ or $\epsilon_n\neq 0$, namely solutions which are to be expected in real-world scenarios. Anyway, under the assumption that there exist  sequences $(x_n)_{n=0}^{\infty}$ satisfying \beqref{eq:PLS2008} and under additional assumptions (such as \cite[Relation (1.4)]{ParenteLotitoSolodov2008jour}; note: the parameters $\eta_k$ mentioned there are certain positive numbers which are not related to the error vectors $\eta_n$ mentioned in  \beqref{eq:PLS2008}), it was shown in \cite[Theorem 4.2]{ParenteLotitoSolodov2008jour} that $(x_n)_{n=0}^{\infty}$ converges to a zero of $A$. Under additional assumptions, a rate of convergence was established \cite[Theorem 4.4]{ParenteLotitoSolodov2008jour}.

 The following theorem shows that if no enlargements are allowed, then Algorithm \bref{alg:PLS2008} is well defined for all $\sigma\in (0,\infty]$ (including $\sigma\geq 1$), all $\theta\in\R$ (if $\theta<0$, then we interpret $[1-\theta,1+\theta]$ as the set $\{t\in\R: 1+\theta\leq t\leq 1-\theta\}$), all $c_n>0$, $n\in\N\cup\{0\}$ (not necessarily bounded away from zero by some $c>0$), and all positive definite linear operators $M_n:X\to X$, $n\in\N\cup\{0\}$ (without any restriction on their eigenvalues). (It is, however, an open problem whether the generated sequence converges to a zero of $A$ in this extended version since the convergence analysis in \cite{ParenteLotitoSolodov2008jour} depends on the assumptions imposed in Algorithm \bref{alg:PLS2008}.) In addition, if $\sigma_n>0$, then either $x_n$ is a zero of $A$ or \beqref{eq:PLS2008} has strictly inexact solutions. 
 
\begin{thm}\label{thm:PLS2008}
Suppose that $\epsilon_n=0$ for all $n\in\N\cup\{0\}$ and consider Algorithm \bref{alg:PLS2008} with any $\sigma\in (0,\infty)$ (including $\sigma\geq 1$), any $\theta\in\R$, arbitrary positive numbers $c_n$, $n\in\N\cup\{0\}$, arbitrary positive definite and symmetric linear operators  $M_n:X\to X$, $n\in\N\cup\{0\}$, and arbitrary $\sigma_n\in [0,\sigma)$. Then there exist sequences which satisfy this algorithm in the exact and inexact cases. More precisely, given $n\in \N\cup\{0\}$, at least one of the following  possibilities hold:
\begin{enumerate}[(i)]
\item  $x_n$ is a zero of $A$. In this case the triplet $(y_n,\xi_n,\eta_n):=(x_n,0,0)$ satisfies \beqref{eq:PLS2008};
\item $\sigma_n=0$. In this case there exists a unique triplet $(y_n,\xi_n,\eta_n)\in X^3$ satisfying \beqref{eq:PLS2008}, namely $((I+c_nM_n A)^{-1}(x_n),-(c_nM_n)^{-1}(y_n-x_n),0)$.\item $0\notin Ax_n$ and $\sigma_n>0$. In this case there exists $r_n>0$ such that for each $\eta_n\in X$ satisfying $\|\eta_n\|<r_n$ there exists a unique $(y_n,\xi_n)\in X^2$ such that \beqref{eq:PLS2008} holds. Moreover, 
\begin{subequations}\label{eq:y_n xi_n PLS2008}
\begin{align}
y_n&=(I+c_nM_n A)^{-1}(x_n+\eta_n),\\
\xi_n&=(c_nM_n)^{-1}\eta_n-(c_nM_n)^{-1}(y_n-x_n). 
\end{align}
\end{subequations}
\end{enumerate}
Furthermore, suppose that for some $n\in\N\cup\{0\}$ the algorithm generates $x_n$ and that $\sigma_n\in [0,1)$. If $x_n=y_n$ (namely, the algorithm terminates), then $x_n$ is a zero of $A$, and if $x_n\neq y_n$ (namely, the algorithm continues), then $\xi_n\neq 0$ (and hence $x_{n+1}$ is well defined). 
\end{thm}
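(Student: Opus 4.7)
The strategy is to parallel the proof of Theorem \ref{thm:SolodovSvaiter1999-1}, casting the conditions \eqref{eq:PLS2008} as a (strongly implicit) inexact resolvent inclusion problem relative to the fully Legendre function $f(w):=\tfrac{1}{2}\langle M_n^{-1}w,w\rangle$, $w\in X$, which is fully Legendre by Example \ref{ex:PositiveDefinite} (indeed, $M_n^{-1}$ is well-defined, symmetric and positive definite, so $\nabla f=M_n^{-1}$ is invertible with $\nabla f^{-1}=M_n$). With this choice of $f$ and with $\lambda:=c_n$, a direct rearrangement shows that \eqref{eq:xi_n in A^epsilon(y_n)}--\eqref{eq:InexactResolventInclusion PLS2008} (under $\epsilon_n=0$) becomes equivalent to the system \eqref{eq:PrincipleSystem} after the change of variables $\eta:=(c_nM_n)^{-1}\eta_n$, $\xi:=\xi_n$, $x:=x_n$, $y:=y_n$. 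Case (i) is then immediate by substitution, since $0\in A(x_n)$ makes $(x_n,0,0,0)$ fulfill all three conditions in \eqref{eq:PLS2008} trivially.

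For case (ii), when $\sigma_n=0$, inequality \eqref{eq:eta_n<=PLS2008} (with $\epsilon_n=0$) forces $\|\eta_n\|_{M_n^{-1}}=0$, hence $\eta_n=0$, and then Proposition \ref{prop:PrincipleResolvent} applied to the exact inclusion gives the unique triplet $(y_n,\xi_n,0)$ displayed in the statement. For case (iii), set $\Phi(\eta,\xi,x,y):=\|c_nM_n\eta\|_{M_n^{-1}}^{2}$ and $\Psi(\eta,\xi,x,y):=\sigma_n^{2}\bigl(\|c_nM_n\xi\|_{M_n^{-1}}^{2}+\|y-x\|_{M_n^{-1}}^{2}\bigr)$, both of which are continuous on $X^*\times X^*\times X^2$. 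Since $X=\R^m$ is finite dimensional, Example \ref{ex:FiniteDimension} guarantees that $(\nabla f+c_nA)^{-1}$ is continuous, so the auxiliary maps $\wt{y},\wt\xi,\phi,\psi$ of Proposition \ref{prop:ImplicitInexactness} are continuous at $0$. Moreover, $\phi(0)=0$, and $\psi(0)=\sigma_n^{2}\|\wt y(0)-x_n\|_{M_n^{-1}}^{2}>0$: here one uses that $\wt y(0)=x_n$ would imply, via the definition of the inverse operator and the fact that $c_n\neq 0$, that $0\in A(x_n)$, contradicting the standing hypothesis of case (iii). Proposition \ref{prop:ImplicitInexactness} then supplies $r_n>0$ such that every $\eta_n\in X$ with $\|\eta_n\|<r_n$ (possibly after shrinking $r_n$ to absorb the constant $\|c_nM_n\|_{M_n^{-1}}^{-1}$ arising from the change of variables) yields a unique $(y_n,\xi_n)$ satisfying \eqref{eq:PLS2008}, given explicitly by \eqref{eq:y_n xi_n PLS2008}.

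For the furthermore clause, assume $\sigma_n\in[0,1)$ and that $x_n$ has been generated. If $y_n=x_n$, substitute into \eqref{eq:InexactResolventInclusion PLS2008} to get $\eta_n=c_nM_n\xi_n$ and plug into \eqref{eq:eta_n<=PLS2008} (with $\epsilon_n=0$) to obtain $\|c_nM_n\xi_n\|_{M_n^{-1}}^{2}\leq \sigma_n^{2}\|c_nM_n\xi_n\|_{M_n^{-1}}^{2}$, so the positive definiteness of $M_n^{-1}$ together with $c_n>0$ and $\sigma_n<1$ force $\xi_n=0$; since $\xi_n\in A(y_n)=A(x_n)$, $x_n$ is a zero of $A$. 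If instead $y_n\neq x_n$, suppose for contradiction that $\xi_n=0$; then \eqref{eq:InexactResolventInclusion PLS2008} gives $\eta_n=y_n-x_n$ and \eqref{eq:eta_n<=PLS2008} becomes $\|y_n-x_n\|_{M_n^{-1}}^{2}\leq \sigma_n^{2}\|y_n-x_n\|_{M_n^{-1}}^{2}$, which is impossible since $\|y_n-x_n\|_{M_n^{-1}}>0$ and $\sigma_n<1$. Hence $\xi_n\neq 0$ and the denominator in the definition of $a_n$ is non-zero, so $x_{n+1}$ in \eqref{eq:x_n+1 PLS2008} is well defined.

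The only delicate point is case (iii): verifying continuity of the maps $\phi,\psi$ and strict positivity of $\psi(0)-\phi(0)$ under the change of variables that rewrites \eqref{eq:PLS2008} in the canonical form \eqref{eq:StronglyImplicitResolventInclusion}. Once the correct $f$ and the correct $\Phi,\Psi$ are identified, everything else is a mechanical invocation of Propositions \ref{prop:PrincipleResolvent} and \ref{prop:ImplicitInexactness}, together with the finite-dimensional continuity of the protoresolvent noted in Example \ref{ex:FiniteDimension}.
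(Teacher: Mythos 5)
Your proposal is correct and follows essentially the same route as the paper: case (i) by substitution, case (ii) via Proposition \ref{prop:PrincipleResolvent} with $f(w)=\tfrac{1}{2}\langle M_n^{-1}w,w\rangle$, case (iii) via Proposition \ref{prop:ImplicitInexactness} together with the continuity of the protoresolvent and a rescaling of the radius under the change of variables $\eta_n=c_nM_n\eta$, and the final clause by the same algebraic manipulation of \eqref{eq:InexactResolventInclusion PLS2008}--\eqref{eq:eta_n<=PLS2008}. The only (harmless, arguably cleaner) deviation is your choice $\Phi(\eta,\xi,x,y):=\|c_nM_n\eta\|^2_{M_n^{-1}}$, which transcribes \eqref{eq:eta_n<=PLS2008} directly in the canonical variable, where the paper uses $\|\eta\|^2_{M_n^{-1}}$ and absorbs the conversion into the final verification; also note that $\psi(0)$ is only bounded below by $\sigma_n^2\|\wt{y}(0)-x_n\|^2_{M_n^{-1}}$ rather than equal to it, which does not affect the argument.
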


\begin{proof}
If $x_n$ is a zero of $A$ and we let $(y_n,\xi_n,\eta_n):=(x_n,0,0)$, then a simple verification shows that $(y_n,\xi_n,\eta_n)$  satisfies \beqref{eq:PLS2008}. 

Suppose now that $\sigma_n=0$ (this possibility is not necessarily disjoint from the first one). If some $(y_n,\xi_n,\eta_n)\in X^3$ satisfies \beqref{eq:PLS2008}, then \beqref{eq:eta_n<=PLS2008} and $\sigma_n=0$ imply that $\eta_n=0$. Denote $f(w):=\frac{1}{2}\langle M_n^{-1}w,w\rangle$ for all $w\in X$.  Then $f$ is fully Legendre and $\nabla f=M_n^{-1}$ (Example \bref{ex:PositiveDefinite} above). The conditions of Proposition \bref{prop:PrincipleResolvent} (with $x=x_n$, $\eta=0$ and $\lambda=c_n$) are satisfied and we have $y_n=(M_n^{-1}+c_nA)^{-1}M_n^{-1}x_n$ and $\xi_n=-(c_nM_n)^{-1}(y_n-x_n)$. The expression for $y_n$ can be simplified because 
\begin{multline}\label{eq:y_n Simplified} (M_n^{-1}+c_nA)^{-1}=(M_n^{-1}\circ(I+c_nM_nA))^{-1}\\
=(I+c_nM_nA)^{-1}\circ(M_n^{-1})^{-1}=(I+c_nM_nA)^{-1}M_n.
\end{multline} 
Thus $y_n=(I+c_nM_nA)^{-1}x_n$. Therefore any  solution $(y_n,\xi_n,\eta_n)\in X^3$ of \beqref{eq:PLS2008} must coincide with the triplet  $((I+c_nM_nA)^{-1}(x_n),-(c_nM_n)^{-1}(y_n-x_n),0)$. On the other hand, Proposition \bref{prop:PrincipleResolvent} (with $x=x_n$, $\eta=0$ and $\lambda=c_n$) ensures that the above-mentioned triplet does solve \beqref{eq:PLS2008}. 

Consider the last case, that is, $0\notin Ax_n$ and $\sigma_n>0$. It must be that $x_n\neq (I+c_nM_n  A)^{-1}(x_n)$, because if $x_n=(I+c_nM_n  A)^{-1}(x_n)$, then by the definition of the inverse operator it follows that $x_n\in (I+c_nM_n A)(x_n)=x_n+c_nM_nA(x_n)$. Since $c_nM_n$ is invertible it follows that $0\in Ax_n$, a contradiction. Define $f$ as above, $\Phi:X^4\to[0,\infty)$ and $\Psi: X^4\to [0,\infty)$ by $\Phi(\eta,\xi,x,y):=\|\eta\|^2_{M_n^{-1}}$ and  $\Psi(\eta,\xi,x,y):=\sigma_n^2(\|c_nM_n\xi\|^2_{M_n^{-1}}+\|y-x\|^2_{M_n^{-1}})$ for each  $(\eta,\xi,x,y)\in X^4$. These are continuous functions. Either Example 
\bref{ex:FiniteDimension} or Example \bref{ex:HilbertContinuous} ensure that $(M_n^{-1}+c_nA)^{-1}$ is continuous. Hence the  functions $\phi:X\to\R$ and $\psi:X\to \R$ defined in \beqref{eq:y xi phi psi} (with $x=x_n$) are continuous. In addition, $\phi(0)=0<\sigma_n^2\|(I+c_nM_nA)^{-1}(x_n)-x_n\|_{M_n^{-1}}^2\leq\psi(0)$. 

Thus all the conditions mentioned in Proposition \bref{prop:ImplicitInexactness} (with $x=x_n$ and $\lambda=c_n$) are satisfied and thus there exists $r>0$ such that for each $\eta\in X$ satisfying $\|\eta\|<r$, there exists a unique pair $(y,\xi)\in X^2$ such that \beqref{eq:StronglyImplicitResolventInclusion} holds. From this $r$ we will construct in the next  paragraph $r_n>0$ such that for all $\eta_n\in X$ satisfying $\|\eta_n\|<r_n$, there is a unique pair $(y_n,\xi_n)\in X$ such that \beqref{eq:PLS2008} holds. 

 Since $c_nM_n$ is positive definite, elementary linear algebra (diagonalization) shows that so is $(c_nM_n)^2$. This fact, together with the finite dimensionality of the space,  implies that (see Example \bref{ex:HilbertContinuous}) there exists $\alpha_n>0$ such that $\langle (c_nM_n)^2x,x\rangle\geq \alpha_n\|x\|^2$ for each $x\in X$. Denote $r_n:=\sqrt{\alpha_n}r$. Fix an arbitrary $\eta_n\in X$  satisfying $\|\eta_n\|<r_n$ and let $\eta:=(c_nM_n)^{-1}\eta_n$. Then
\begin{multline}
\sqrt{\alpha_n}r=r_n>\|\eta_n\|=\|(c_nM_n)\eta\|=\sqrt{\langle (c_nM_n)\eta,(c_nM_n)\eta\rangle}\\
=\sqrt{\langle (c_nM_n)^2\eta,\eta\rangle}\geq \sqrt{\alpha_n}\|\eta\|.
\end{multline}
Hence $\|\eta\|<r$ and, as mentioned earlier, we know from Proposition \bref{prop:ImplicitInexactness} (with $\lambda=c_n$) that there exists a unique $(y,\xi)\in X$ for which \beqref{eq:StronglyImplicitResolventInclusion} holds. This pair satisfies  \beqref{eq:PrincipleExplicit}. Denote $y_n:=y$ and $\xi_n:=\xi$. Then  \beqref{eq:StronglyImplicitResolventInclusion}, the equality $\eta_n=c_nM_n\eta$ and a simple verification show that  \beqref{eq:PLS2008} holds, and we have existence. Moreover, \beqref{eq:PrincipleExplicit}, the fact that $\lambda\eta=M_n^{-1}\eta_n$ and a simplification for $y_n$ as done in \beqref{eq:y_n Simplified}, all imply \beqref{eq:y_n xi_n PLS2008}. Now, let $(y_n,\xi_n,\eta_n)$ be an arbitrary solution to \beqref{eq:PLS2008} for which $\|\eta_n\|<r_n$. We apply Proposition \bref{prop:PrincipleResolvent} which ensures that $(y_n,\xi_n)$ satisfies \beqref{eq:y_n xi_n PLS2008}, namely it coincides with the pair $(y_n,\xi_n)$ from the previous sentence. Thus whenever $\|\eta_n\|<r_n$ there exists a unique pair $(y_n,\xi_n)$ such that \beqref{eq:PLS2008} holds. 

Finally, suppose that for some $n\in\N\cup\{0\}$ the algorithm generates $x_n$ and that $\sigma_n\in [0,1)$. If $y_n=x_n$ (we already know that $y_n$ exists), then it must be that $\xi_n=0$. Indeed, from \beqref{eq:PLS2008} we have $\eta_n=c_nM_n\xi_n$ and $\|\eta_n\|^2_{M_n^{-1}}\leq\sigma_n^2\|\eta_n\|^2_{M_n^{-1}}$. Since $0\leq\sigma_n<1$, it follows that $\|\eta_n\|=0$ and thus $\eta_n=0$. Hence $\xi_n=(c_nM_n)^{-1}0=0$, as claimed. Therefore  \beqref{eq:PLS2008} implies that $0\in A(x_n)$ and thus $x_n$ is a zero of $A$. Suppose now that $y_n\neq x_n$. We already know that $(y_n,\xi_n)$ exists, but we must verify that $\xi_n\neq 0$ so that $x_{n+1}$ will be well defined. If $\xi_n=0$, then from \beqref{eq:PLS2008} we have $\eta_n=y_n-x_n$ and $\|y_n-x_n\|^2_{M_n^{-1}}\leq\sigma_n^2\|y_n-x_n\|^2_{M_n^{-1}}$.  Since $0\leq\sigma_n<1$, it follows that $\|y_n-x_n\|=0$ and therefore 
$y_n=x_n$, a contradiction. Thus indeed $\xi_n\neq 0$.
\end{proof}

\section{Well-definedness of many more algorithms}\label{sec:ManyMore}
The ideas and the results described in this paper can be applied to deduce the well-definedness of many more inexact algorithmic schemes (and corresponding convergence theorems), among them  the ones of Burachik and Iusem \cite[Algorithm IPPM: Inexact Proximal Point Method, p. 234]{BurachikIusem2008book}, Burachik, Scheimberg and Svaiter \cite[Algorithm 2.1 (Inexact Hybrid Extragradient Proximal Algorithm), Theorem 3.1]{BurachikScheimbergSvaiter2001jour},  G{\'a}rciga Otero and Iusem \cite[Inexact Proximal  Point-Extragradient Method (pp. 75--76), Theorem 3.6]{Garciga-OteroIusem2004jour}, G{\'a}rciga Otero and Svaiter \cite[Algorithm 1, Theorem 4.3]{OteroSvaiter2004jour},  Iusem and G{\'a}rciga Otero \cite[Algorithms  I, II, PI, PII,  Theorems 1-7]{IusemGarciga-Otero2002jour},\cite[Algorithms  I--IV, Theorems 1--6]{IusemOtero2001jour},  Reich and Sabach \cite[Algorithm (3.1), Theorem 3.1]{ReichSabach2009jour}, Silva, Eckstein and Humes, Jr. \cite[Box Interior Proximal Point Algorithm (BIPPA, p. 254), Theorem 4.10 (p.  255)]{daSilvaSilvaEcksteinHumes2001jour}, and Solodov and Svaiter \cite[Algorithm 3.1 (Hybrid Approximate  Extragradient-Proximal Point Algorithm, pp. 331-332), Theorems  3.1--3.2]{SolodovSvaiter1999-2jour}, \cite[Relation (9), Theorem 1]{SolodovSvaiter2000incol},\cite[Algorithm 2.1, Theorems 6, 8]{SolodovSvaiter2001jour}. Our results can be applied also in the context of Griva and Polyak \cite[The modified PPNR Method (8)--(10) (p.  285), Theorem 4.10]{GrivaPolyak2011jour} and  Rockafellar \cite[Algorithm (B)  (p. 880), Theorems 2-3]{Rockafellar1976jour}, but they seem not very natural in the context of these latter papers.

We believe, but leave it as an open problem for a future investigation,  that modifications and  generalizations of the methods presented here may be applied in one way or another in the context of many other inexact algorithmic schemes (and corresponding convergence results) which can be found in the literature. These schemes  are closely related, but somewhat different from  the ones presented in this paper (due to different imposed assumptions, say those related to the relevant  operators or sequences), and in the majority of them there are issues with their well-definedness when non-zero error terms appear. Among the schemes which seem promising in this  context are the ones presented in Auslender, Teboulle, and Ben-Tiba  \cite[The Logarithmic-Quadratic Proximal  method (LQP, p. 34), Theorem 1]{AuslenderTeboulleBen-Tiba1999jour}, Burachik, Lopes, and Da Silva \cite[Extragradient Algorithm (EA, p. 26), Theorem 3.11]{BurachikLopesDa-Silva2009jour},  Burachik and Svaiter \cite[Hybrid Interior Proximal 
Extragradient Method (HIPEM), p. 820, Theorem 4.1]{BurachikSvaiter2001jour},  Eckstein and Svaiter \cite[Algorithm 3, Proposition 4.2]{EcksteinSvaiter2009jour}, G{\'a}rciga Otero and Iusem \cite[Algorithms 1--2, Theorems 1--3]{GarcigaOtero-Iusem2007jour}, \cite[Algorithms 1,2  (Section 4), Theorems 4.1--4.2]{OteroIusem2013jour}, Humes, Silva and Svaiter \cite[The hybrid algorithms of Subsection 2.2, Theorems 1-2]{HumesSilvaSvaiter2004jour}, Lotito, Parente, and Solodov 
\cite[The algorithm on p. 860, Algorithm 2.1 (VMHPDM, pp. 862--863), 
Theorem 2.2]{LotitoParenteSolodov2009jour}, Monteiro and Svaiter \cite[Large-step HPE Method (pp. 917--918), Inexact NPE Method (p. 922), Theorems 2.5,2.7,3.5, 3.6]{MonteiroSvaiter2012jour}, \cite[A-HPE framework (pp. 1095-1096), Large-step A-HPE framework (p. 1102), Theorems 3.6,3.8,4.1]{MonteiroSvaiter2013jour}, Solodov \cite[Hybrid proximal decomposition method (HPDM, Algorithm 2.1, pp. 561--562)]{Solodov2004jour}, Solodov and Svaiter \cite[Algorithm 1 (p. 384), Theorem 3]{SolodovSvaiter2000b-jour}, \cite[Algorithm 1 (Inexact Generalized Proximal Method, p. 222), Theorem 3.2]{SolodovSvaiter2000jour}, and Xia and Huang \cite[Algorithm 3.1 (pp.  4598--4599), Theorem 4.5]{XiaHuang2011jour}.

\section{Concluding remarks}\label{sec:AdditionalRemarks}
We conclude the paper with the following remarks.

\begin{remark}
It will be interesting and useful to extend the ideas and various assertions described  in this paper to  other settings. In particular, to allow (with a suitable caution due to the presence of error terms) in the inexact resolvent problem \beqref{eq:PerturbedResolventProblem} functions $f$ having effective domains which are subsets of the whole space, to allow  enlargements of operators (here it seems reasonable to extend the theory of resolvents mentioned briefly in Section \bref{sec:Preliminaries} and the references cited there to resolvents of enlargements, and \cite{BurachikIusemSvaiter1997jour,BurachikSagastizabalSvaiter1999jour,BurachikIusem2008book} may be of some help in this direction), to consider also inexactness coming from  $\epsilon$-subdifferentials, to allow spaces more general than normed spaces such as Hadamard spaces and other metric spaces  \cite{AhmadiKhatibzadeh2014jour,Bacak2013jour,Bacak2014book,LiLopezMartin-Marquez2009jour,TangHuang2014jour,WangLiLopezYao2015jour,Zaslavski2011-2jour} (the theory of resolvents for Hadamard  spaces described in \cite{LiLopezMartina-MarquezWang2011jour} may help in this direction), to allow certain nonlinear modifications of \beqref{eq:PerturbedResolventProblem} such as the one given in \cite[p. 179]{Aragon-ArtachoGeoffroy2007jour} and \cite[pp. 648, 650, 658]{AuslenderTeboulleBen-Tiba1999-2jour} (and to extend the latter ones so they will allow   general Bregman distances which may not be induced from Bregman functions  \cite{Reem2012incol}), to allow inducing functions $f$ more general than fully Legendre such as zero-convex functions \cite{CensorReem2015jour} (or at least special but important classes of zero-convex functions such as quasiconvex functions  \cite{Papa-QuirozOliveira2009jour}), d.c. functions \cite{SouzaOliveira2015jour}, and so on.
\end{remark}

\begin{remark}\label{rem:ReichSabachFullyLegendre} 
In the case of \cite[Theorem 4.1]{ReichSabach2010b-jour} the assumptions on $f$ mentioned in Section \bref{sec:ReichSabach2010b4.1} above imply that $f$ is fully Legendre. Indeed, $f$ is assumed to be totally convex  and therefore it is convex ($f^*$ is always convex); the interior of the effective domains of $f$ and $f^*$ are $X$ and $X^*$, respectively, and therefore both functions are proper; in addition, $\nabla f$ and $\nabla f^*$ are defined on $X$ and $X^*$, respectively, because $f$ and $f^*$ are assumed to be Fr\'echet differentiable and hence both functions are  G\^ateaux differentiable;  as a result, $\dom(\nabla f)=X$ and $\dom(\nabla f^*)=X^*$; since both $f$ and $f^*$ are Fr\'echet differentiable and thus also lower  semicontinuous, we conclude from the above discussion that $f$ is fully Legendre.
\end{remark}

\begin{remark}\label{rem:AuslenderTeboulleBen-Tiba}
In Remark \bref{rem:HistoryInexactResolventInclusionProblem} above we mentioned Auslender et al. \cite{AuslenderTeboulleBen-Tiba1999jour} and briefly discussed its relation to the inexact resolvent inclusion problem \beqref{eq:PerturbedResolventProblem}. Here we want to discuss additional issues related to \cite{AuslenderTeboulleBen-Tiba1999jour} and to our paper. First, the setting in  \cite[Proposition 2]{AuslenderTeboulleBen-Tiba1999jour} (see also Auslender and Teboulle \cite[Prop. 6.8.3, pp. 216--217]{AuslenderTeboulle2003book}) is a finite-dimensional Euclidean space $X$, a  maximally monotone operator $A$ the effective  domain of which intersects the effective domain of a certain linear deformation of $f$, the  function $f$ is a proper lower semicontinuous convex function which is (Fr\'echet) differentiable on its nonempty and open effective domain, its gradient is onto $X$, and its recession function $f_{\infty}$ satisfies $f_{\infty}(x)=\infty$ for all $x\neq 0$. According to Remark \bref{rem:FullyLegendreFIniteDim} above, if we also assume that the effective domain of $f$ is the whole space and $f$ is strictly convex  there, then $f$ must be fully Legendre. Second, although \cite[Proposition 2]{AuslenderTeboulleBen-Tiba1999jour} allows the effective domain of $f$ to be a strict subset of the space, in this case caution is needed before one can apply \cite[Proposition 2]{AuslenderTeboulleBen-Tiba1999jour} to the inexact resolvent inclusion problem or to some iterative algorithms, because the error terms may induce points located outside the effective domains of certain key operators. 
\end{remark}

\begin{remark}
In addition to \cite[Algorithm (4.1), Theorem 4.1]{ReichSabach2010b-jour}, the paper \cite{ReichSabach2010b-jour} contains another algorithmic scheme and a  corresponding strong convergence theorem, namely \cite[Algorithm (4.4), Theorem 4.2]{ReichSabach2010b-jour}. Although it is not entirely clear from the formulations of the scheme and the theorem that the error terms  mentioned there can be arbitrary, a simple verification shows that they indeed can. Moreover, there is no need to make any modification in the corresponding formulations and proof (and, in particular,  there is no need to use  any external result such as Proposition \bref{prop:PrincipleResolvent} above). Similar observations hold regarding the various algorithmic schemes and strong convergence results established in \cite{ReichSabach2010jour,ReichSabach2012col}. 
\end{remark}

\begin{remark}
It would be of interest to develop further the continuity results discussed in Section \bref{sec:Continuity}. For instance, to give additional sufficient conditions which guarantee the continuity of the protoreolvent, to find examples where it is discontinuous (or to prove that such examples are impossible), and to establish results in which not only the vectors $x$  and $\eta$ are allowed to vary, but also the relaxation parameter $\lambda$, the operator $A$ and the function $f$. 
\end{remark}

\section*{Acknowledgments}
We would like to express our thanks to Shoham Sabach and Roman Polyak for helpful discussions, and to the referees for considering our paper and for their feedback. Part of the research of the first author was done while he was at the Institute of Mathematical and Computer Sciences (ICMC), University of S\~ao Paulo,  
 S\~ao Carlos, Brazil (2015) and this is an opportunity for him to thank FAPESP.  The second author was partially supported by the Israel Science Foundation (Grant 389/12), by the Fund for the Promotion of Research at the Technion and by the Technion General Research Fund.

%\bibliographystyle{amsplian}
%\bibliography{biblio}

% \bib, bibdiv, biblist are defined by the amsrefs package.
\begin{bibdiv}
\begin{biblist}

\bib{AhmadiKhatibzadeh2014jour}{article}{
      author={Ahmadi, P.},
      author={Khatibzadeh, H.},
       title={On the convergence of inexact proximal point algorithm on
  {H}adamard manifolds},
        date={2014},
        ISSN={1027-5487},
     journal={Taiwanese J. Math.},
      volume={18},
       pages={419\ndash 433},
         url={http://dx.doi.org/10.11650/tjm.18.2014.3066},
      review={\MR{3188511}},
}

\bib{AmbrosettiProdi1993book}{book}{
      author={Ambrosetti, A.},
      author={Prodi, G.},
       title={A {P}rimer of {N}onlinear {A}nalysis},
   publisher={Cambridge University Press},
     address={New York, USA},
        date={1993},
      review={\MR{MR1225101}},
}

\bib{Aragon-ArtachoGeoffroy2007jour}{article}{
      author={Arag{\'o}n~Artacho, F.~J.},
      author={Geoffroy, M.~H.},
       title={Uniformity and inexact version of a proximal method for
  metrically regular mappings},
        date={2007},
        ISSN={0022-247X},
     journal={J. Math. Anal. Appl.},
      volume={335},
       pages={168\ndash 183},
         url={http://dx.doi.org/10.1016/j.jmaa.2007.01.050},
      review={\MR{2340313}},
}

\bib{AuslenderTeboulle2003book}{book}{
      author={Auslender, A.},
      author={Teboulle, M.},
       title={Asymptotic cones and functions in optimization and variational
  inequalities},
      series={Springer Monographs in Mathematics},
   publisher={Springer-Verlag, New York},
        date={2003},
        ISBN={0-387-95520-8},
      review={\MR{1931309}},
}

\bib{AuslenderTeboulleBen-Tiba1999-2jour}{article}{
      author={Auslender, A.},
      author={Teboulle, M.},
      author={Ben-Tiba, S.},
       title={Interior proximal and multiplier methods based on second order
  homogeneous kernels},
        date={1999},
        ISSN={0364-765X},
     journal={Math. Oper. Res.},
      volume={24},
       pages={645\ndash 668},
         url={http://dx.doi.org/10.1287/moor.24.3.645},
      review={\MR{1854247}},
}

\bib{AuslenderTeboulleBen-Tiba1999jour}{article}{
      author={Auslender, A.},
      author={Teboulle, M.},
      author={Ben-Tiba, S.},
       title={A logarithmic-quadratic proximal method for variational
  inequalities},
        date={1999},
     journal={Comput. Optim. Appl.},
      volume={12},
       pages={31\ndash 40},
        note={Computational optimization---a tribute to Olvi Mangasarian, Part
  I},
      review={\MR{1704099}},
}

\bib{Bacak2013jour}{article}{
      author={Ba{\v{c}}{\'a}k, M.},
       title={The proximal point algorithm in metric spaces},
        date={2013},
        ISSN={0021-2172},
     journal={Israel J. Math.},
      volume={194},
       pages={689\ndash 701},
         url={http://dx.doi.org/10.1007/s11856-012-0091-3},
      review={\MR{3047087}},
}

\bib{Bacak2014book}{book}{
      author={Ba{\v{c}}{\'a}k, M.},
       title={Convex analysis and optimization in {H}adamard spaces},
      series={De Gruyter Series in Nonlinear Analysis and Applications},
   publisher={De Gruyter, Berlin},
        date={2014},
      volume={22},
        ISBN={978-3-11-036103-2; 978-3-11-036162-9},
      review={\MR{3241330}},
}

\bib{BanerjeeMeruguDhillonGhosh2005jour}{article}{
      author={Banerjee, A.},
      author={Merugu, S.},
      author={Dhillon, I.~S.},
      author={Ghosh, J.},
       title={Clustering with {B}regman divergences},
        date={2005},
     journal={Journal of Machine Learning Research},
      volume={6},
       pages={1705\ndash 1749},
        note={A preliminary version in Proceedings of the fourth SIAM
  International Conference on Data Mining, pp. 234--245, Philadelphia, 2004},
}

\bib{BauschkeBorwein1997jour}{article}{
      author={Bauschke, H.~H.},
      author={Borwein, J.~M.},
       title={Legendre functions and the method of random {B}regman
  projections},
        date={1997},
        ISSN={0944-6532},
     journal={J. Convex Anal.},
      volume={4},
       pages={27\ndash 67},
      review={\MR{1459881}},
}

\bib{BauschkeBorweinCombettes2001jour}{article}{
      author={Bauschke, H.~H.},
      author={Borwein, J.~M.},
      author={Combettes, P.~L.},
       title={Essential smoothness, essential strict convexity, and {L}egendre
  functions in {B}anach spaces},
        date={2001},
     journal={Communications in Contemporary Mathematics},
      volume={3},
       pages={615\ndash 647},
      review={\MR{1869107}},
}

\bib{BauschkeBorweinCombettes2003jour}{article}{
      author={Bauschke, H.~H.},
      author={Borwein, J.~M.},
      author={Combettes, P.~L.},
       title={Bregman monotone optimization algorithms},
        date={2003},
     journal={SIAM Journal on Control and Optimization},
      volume={42},
       pages={596\ndash 636},
      review={\MR{1982285}},
}

\bib{BauschkeCombettes2017book}{book}{
      author={Bauschke, H.~H.},
      author={Combettes, P.~L.},
       title={Convex {A}nalysis and {M}onotone {O}perator {T}heory in {H}ilbert
  {S}paces},
     edition={2},
      series={CMS Books in Mathematics},
   publisher={Springer International Publishing},
     address={Cham, Switzerland},
        date={2017},
      review={\MR{2798533}},
}

\bib{BauschkeWangYao2010inbook}{incollection}{
      author={Bauschke, H.~H.},
      author={Wang, X.},
      author={Yao, L.},
       title={General resolvents for monotone operators: characterization and
  extension},
        date={2010, arXiv:0810.3905 [math.FA] ([v1], 21 Oct 2008)},
   booktitle={Biomedical mathematics: Promising directions in imaging, therapy
  planning and inverse problems ({H}uangguoshu 2008), {C}hapter 4, {M}edical
  {P}hysics {P}ublishing},
}

\bib{BonnansShapiro2000book}{book}{
      author={Bonnans, J.~F.},
      author={Shapiro, A.},
       title={Perturbation analysis of optimization problems},
      series={Springer Series in Operations Research},
   publisher={Springer-Verlag, New York},
        date={2000},
        ISBN={0-387-98705-3},
         url={http://dx.doi.org/10.1007/978-1-4612-1394-9},
      review={\MR{1756264}},
}

\bib{Borwein2006jour}{article}{
      author={Borwein, J.~M.},
       title={Maximal monotonicity via convex analysis},
        date={2006},
     journal={Journal of Convex Analysis},
      volume={13},
       pages={561\ndash 586},
      review={\MR{2291552}},
}

\bib{BorweinLewis2006book}{book}{
      author={Borwein, J.~M.},
      author={Lewis, A.~L.},
       title={Convex analysis and nonlinear optimization: Theory and examples},
     edition={2},
      series={CMS books in Mathematics},
   publisher={Springer},
     address={New York, NY, USA},
        date={2006},
      review={\MR{2184742}},
}

\bib{Bregman1967jour}{article}{
      author={Bregman, L.~M.},
       title={The relaxation method of finding the common point of convex sets
  and its application to the solution of problems in convex programming},
        date={1967},
     journal={USSR Computational Mathematics and Mathematical Physics},
      volume={7},
       pages={200\ndash 217},
}

\bib{Brezis1973book}{book}{
      author={Br{\'e}zis, H.},
       title={Op\'erateurs {M}aximaux {M}onotones et {S}emi-groupes de
  {C}ontractions dans les {E}spaces de {H}ilbert},
   publisher={North-Holland Publishing Co., Amsterdam-London; American Elsevier
  Publishing Co., Inc., New York},
        date={1973},
        note={North-Holland Mathematics Studies, No. 5. Notas de Matem{\'a}tica
  (50)},
      review={\MR{0348562}},
}

\bib{Brezis2011book}{book}{
      author={Brezis, H.},
       title={Functional {A}nalysis, {S}obolev {S}paces and {P}artial
  {D}ifferential {E}quations},
      series={Universitext},
   publisher={Springer},
     address={New York},
        date={2011},
        ISBN={978-0-387-70913-0},
      review={\MR{2759829}},
}

\bib{BrezisLions1978jour}{article}{
      author={Br{\'e}zis, H.},
      author={Lions, P.-L.},
       title={Produits infinis de r\'esolvantes},
        date={1978},
        ISSN={0021-2172},
     journal={Israel J. Math.},
      volume={29},
       pages={329\ndash 345},
         url={http://dx.doi.org/10.1007/BF02761171},
      review={\MR{491922}},
}

\bib{BruckReich1977jour}{article}{
      author={Bruck, R.~E.},
      author={Reich, S.},
       title={Nonexpansive projections and resolvents of accretive operators in
  {B}anach spaces},
        date={1977},
     journal={Houston J. Math.},
      volume={32},
       pages={459\ndash 470},
      review={\MR{0470761}},
}

\bib{BurachikIusem2008book}{book}{
      author={Burachik, R.~S.},
      author={Iusem, A.~N.},
       title={Set-valued {M}appings and {E}nlargements of {M}onotone
  {O}perators},
      series={Springer Optimization and Its Applications},
   publisher={Springer, New York},
        date={2008},
      volume={8},
        ISBN={978-0-387-69755-0},
      review={\MR{2353163}},
}

\bib{BurachikIusemSvaiter1997jour}{article}{
      author={Burachik, R.~S.},
      author={Iusem, A.~N.},
      author={Svaiter, B.~F.},
       title={Enlargement of monotone operators with applications to
  variational inequalities},
        date={1997},
        ISSN={0927-6947},
     journal={Set-Valued Anal.},
      volume={5},
       pages={159\ndash 180},
         url={http://dx.doi.org/10.1023/A:1008615624787},
      review={\MR{1463929}},
}

\bib{BurachikLopesDa-Silva2009jour}{article}{
      author={Burachik, R.~S.},
      author={Lopes, J.~O.},
      author={Silva, G. J. P.~Da},
       title={An inexact interior point proximal method for the variational
  inequality problem},
        date={2009},
        ISSN={0101-8205},
     journal={Comput. Appl. Math.},
      volume={28},
       pages={15\ndash 36},
         url={http://dx.doi.org/10.1590/S0101-82052009000100002},
      review={\MR{2495825}},
}

\bib{BurachikSagastizabalSvaiter1999jour}{incollection}{
      author={Burachik, R.~S.},
      author={Sagastiz{\'a}bal, C.~A.},
      author={Svaiter, B.~F.},
       title={{$\epsilon$}-enlargements of maximal monotone operators: theory
  and applications},
        date={1999},
   booktitle={In: Fukushima M., Qi L. (eds.), Reformulation: nonsmooth, piecewise smooth, semismooth and
  smoothing methods ({L}ausanne, 1997)},
      series={Appl. Optim.},
      volume={22},
   publisher={Kluwer Acad. Publ., Dordrecht},
       pages={25\ndash 43},
         url={http://dx.doi.org/10.1007/978-1-4757-6388-1_2},
      review={\MR{1682740}},
}

\bib{BurachikScheimberg2000jour}{article}{
      author={Burachik, R.~S.},
      author={Scheimberg, S.},
       title={A proximal point method for the variational inequality problem in
  {B}anach spaces},
        date={2000},
     journal={SIAM Journal on Control and Optimization},
      volume={39},
       pages={1633\ndash 1649},
      review={\MR{1825596}},
}

\bib{BurachikScheimbergSvaiter2001jour}{article}{
      author={Burachik, R.~S.},
      author={Scheimberg, S.},
      author={Svaiter, B.~F.},
       title={Robustness of the hybrid extragradient proximal-point algorithm},
        date={2001},
        ISSN={0022-3239},
     journal={J. Optim. Theory Appl.},
      volume={111},
       pages={117\ndash 136},
         url={http://dx.doi.org/10.1023/A:1017523331361},
      review={\MR{1850681}},
}

\bib{BurachikSvaiter2001jour}{article}{
      author={Burachik, R.~S.},
      author={Svaiter, B.~F.},
       title={A relative error tolerance for a family of generalized proximal
  point methods},
        date={2001},
     journal={Mathematics of Operations Research},
      volume={26},
       pages={816\ndash 831},
         url={http://dx.doi.org/10.1287/moor.26.4.816.10011},
      review={\MR{1870746}},
}

\bib{ButnariuIusem2000book}{book}{
      author={Butnariu, D.},
      author={Iusem, A.~N.},
       title={Totally convex functions for fixed point computation and infinite
  dimensional optimization},
      series={Applied Optimization},
   publisher={Kluwer Academic Publishers},
     address={Dordrecht, The Netherlands},
        date={2000},
      review={\MR{1774818}},
}

\bib{ButnariuIusemZalinescu2003jour}{article}{
      author={Butnariu, D.},
      author={Iusem, A.~N.},
      author={Z\u{a}linescu, C.},
       title={On uniform convexity, total convexity and convergence of the
  proximal point and outer {B}regman projection algorithms in {B}anach spaces},
        date={2003},
     journal={Journal of Convex Analysis},
      volume={10},
       pages={35\ndash 61},
      review={\MR{1999901}},
}

\bib{Censor2015surv}{article}{
      author={Censor, Y.},
       title={Weak and strong superiorization: between feasibility-seeking and
  minimization},
        date={2015},
        ISSN={1224-1784},
     journal={An. \c Stiin\c t. Univ. ``Ovidius'' Constan\c ta Ser. Mat.},
      volume={23},
       pages={41\ndash 54},
      review={\MR{3374759}},
}

\bib{CensorSuperiorizationPage}{misc}{
      author={Censor, Y.},
       title={Superiorization and perturbation resilience of algorithms: A
  continuously updated bibliography},
        date={2017},
        note={{\bf
  \color{blue}{\url{http://math.haifa.ac.il/yair/bib-superiorization-censor.html}}},
  website last updated: 2 March 2017, arXiv version: arXiv:1506.04219 [math.OC]
  ([v2], 9 Mar 2017)},
}

\bib{CensorDavidiHerman2010jour}{article}{
      author={Censor, Y.},
      author={Davidi., R.},
      author={Herman, G.~T.},
       title={Perturbation resilience and superiorization of iterative
  algorithms},
        date={2010},
     journal={Inverse Problems},
      volume={26},
       pages={065008 (12 pages)},
      review={\MR{2647162}},
}

\bib{CensorLent1981jour}{article}{
      author={Censor, Y.},
      author={Lent, A.},
       title={An iterative row-action method for interval convex programming},
        date={1981},
     journal={Journal of Optimization Theory and Applications},
      volume={34},
       pages={321\ndash 353},
      review={\MR{628201}},
}

\bib{CensorReem2015jour}{article}{
      author={Censor, Y.},
      author={Reem, D.},
       title={Zero-convex functions, perturbation resilience, and subgradient
  projections for feasibility-seeking methods},
        date={2015},
     journal={Mathematical Programming (Ser. A)},
      volume={152},
       pages={339\ndash 380},
      review={\MR{3369485}},
}

\bib{CensorZenios1992jour}{article}{
      author={Censor, Y.},
      author={Zenios, A.},
       title={Proximal minimization algorithm with {$D$}-functions},
        date={1992},
     journal={Journal of Optimization Theory and Applications},
      volume={73},
       pages={451\ndash 464},
}

\bib{CourantHilbert1962IIbook}{book}{
      author={Courant, R.},
      author={Hilbert, D.},
       title={Methods of mathematical physics. {V}ol. {II}: {P}artial
  differential equations},
      series={(Vol. II by R. Courant.)},
   publisher={Interscience Publishers (a division of John Wiley \& Sons), New
  York-London},
        date={1962},
      review={\MR{0140802}},
}

\bib{Davidi2010PhD}{thesis}{
      author={Davidi, R.},
       title={Algorithms for superiorization and their applications to image
  reconstruction},
        type={Ph.D. Thesis, The City University of New York (CUNY), USA},
				date={2010},
}

\bib{Diestel1975book}{book}{
      author={Diestel, J.},
       title={Geometry of {B}anach spaces---selected topics},
      series={Lecture Notes in Mathematics, Vol. 485},
   publisher={Springer-Verlag, Berlin-New York},
        date={1975},
      review={\MR{0461094}},
}

\bib{Djafari-RouhaniKhatibzadeh2008jour}{article}{
      author={Djafari~Rouhani, B.},
      author={Khatibzadeh, H.},
       title={On the proximal point algorithm},
        date={2008},
        ISSN={0022-3239},
     journal={J. Optim. Theory Appl.},
      volume={137},
       pages={411\ndash 417},
         url={http://dx.doi.org/10.1007/s10957-007-9329-3},
      review={\MR{2395110}},
}

\bib{Eckstein1993jour}{article}{
      author={Eckstein, J.},
       title={Nonlinear proximal point algorithms using {B}regman functions,
  with applications to convex programming},
        date={1993},
     journal={Mathematics of Operations Research},
      volume={18},
       pages={202\ndash 226},
      review={\MR{1250114}},
}

\bib{Eckstein1998jour}{article}{
      author={Eckstein, J.},
       title={Approximate iterations in {B}regman-function-based proximal
  algorithms},
        date={1998},
        ISSN={0025-5610},
     journal={Mathematical Programming},
      volume={83},
       pages={113\ndash 123},
         url={http://dx.doi.org/10.1007/BF02680553},
      review={\MR{1643947}},
}

\bib{EcksteinSvaiter2009jour}{article}{
      author={Eckstein, J.},
      author={Svaiter, B.~F.},
       title={General projective splitting methods for sums of maximal monotone
  operators},
        date={2009},
        ISSN={0363-0129},
     journal={SIAM J. Control Optim.},
      volume={48},
       pages={787\ndash 811},
         url={http://dx.doi.org/10.1137/070698816},
      review={\MR{2486094}},
}

\bib{Garciga-OteroIusem2004jour}{article}{
      author={G{\'a}rciga~Otero, R.},
      author={Iusem, A.~N.},
       title={Proximal methods with penalization effects in {B}anach spaces},
        date={2004},
        ISSN={0163-0563},
     journal={Numer. Funct. Anal. Optim.},
      volume={25},
       pages={69\ndash 91},
         url={http://dx.doi.org/10.1081/NFA-120034119},
      review={\MR{2045545}},
}

\bib{GarcigaOtero-Iusem2007jour}{article}{
      author={G{\'a}rciga~Otero, R.},
      author={Iusem, A.~N.},
       title={Proximal methods in reflexive {B}anach spaces without
  monotonicity},
        date={2007},
        ISSN={0022-247X},
     journal={J. Math. Anal. Appl.},
      volume={330},
       pages={433\ndash 450},
         url={http://dx.doi.org/10.1016/j.jmaa.2006.07.076},
      review={\MR{2302934}},
}

\bib{OteroIusem2013jour}{article}{
      author={G{\'a}rciga~Otero, R.},
      author={Iusem, A.~N.},
       title={Fixed-point methods for a certain class of operators},
        date={2013},
     journal={Journal of Optimization Theory and Applications},
      volume={159},
       pages={656\ndash 672},
      review={\MR{3124989}},
}

\bib{OteroSvaiter2004jour}{article}{
      author={G{\'a}rciga~Otero, R.},
      author={Svaiter, B.~F.},
       title={A strongly convergent hybrid proximal method in {B}anach spaces},
        date={2004},
     journal={J. Math. Anal. Appl.},
      volume={289},
       pages={700\ndash 711},
      review={\MR{2026935}},
}

\bib{GrivaPolyak2011jour}{article}{
      author={Griva, I.},
      author={Polyak, R.~A.},
       title={Proximal point nonlinear rescaling method for convex
  optimization},
        date={2011},
        ISSN={2155-3289},
     journal={Numer. Algebra Control Optim.},
      volume={1},
       pages={283\ndash 299},
         url={http://dx.doi.org/10.3934/naco.2011.1.283},
      review={\MR{2805932}},
}

\bib{Ha1990jour}{article}{
      author={Ha, C.~D.},
       title={A generalization of the proximal point algorithm},
        date={1990},
        ISSN={0363-0129},
     journal={SIAM J. Control Optim.},
      volume={28},
       pages={503\ndash 512},
         url={http://dx.doi.org/10.1137/0328029},
      review={\MR{1047419}},
}

\bib{Herman2014surv}{incollection}{
      author={Herman, G.~T.},
       title={Superiorization for image analysis},
        date={2014},
   booktitle={Combinatorial image analysis},
      series={Lecture Notes in Computer Science},
      volume={8466},
   publisher={Springer},
       pages={1\ndash 7},
      review={\MR{3213250}},
}

\bib{Hiriart-UrrutyLemarechal2001book}{book}{
      author={Hiriart-Urruty, J.-B.},
      author={Lemar\'echal, C.},
       title={Fundamentals of {C}onvex {A}nalysis},
      series={Grundlehren Text Editions},
   publisher={Springer-Verlag, Berlin},
        date={2001},
        ISBN={3-540-42205-6},
         url={http://dx.doi.org/10.1007/978-3-642-56468-0},
        note={(Abridged version of Convex analysis and Minimization Algorithms.
  I, II, Springer, 1993)},
      review={\MR{1865628}},
}

\bib{HumesSilvaSvaiter2004jour}{article}{
      author={Humes, C., Jr.},
      author={Silva, P. J.~S.},
      author={Svaiter, B.~F.},
       title={Some inexact hybrid proximal augmented {L}agrangian algorithms},
        date={2004},
        ISSN={1017-1398},
     journal={Numer. Algorithms},
      volume={35},
       pages={175\ndash 184},
         url={http://dx.doi.org/10.1023/B:NUMA.0000021768.30330.4b},
      review={\MR{2072969}},
}

\bib{IusemOtero2001jour}{article}{
      author={Iusem, A.~N.},
      author={G{\'a}rciga~Otero, R.},
       title={Inexact versions of proximal point and augmented {L}agrangian
  algorithms in {B}anach spaces},
        date={2001},
     journal={Numer. Funct. Anal. Optim.},
      volume={22},
       pages={609\ndash 640},
        note={MR1849570. Erratum: Numer. Funct. Anal. Optim. 23 (2002), no.
  1-2, 227--228, MR1900580},
}

\bib{IusemGarciga-Otero2002jour}{article}{
      author={Iusem, A.~N.},
      author={G{\'a}rciga~Otero, R.},
       title={Augmented {L}agrangian methods for cone-constrained convex
  optimization in {B}anach spaces},
        date={2002},
        ISSN={1345-4773},
     journal={J. Nonlinear Convex Anal.},
      volume={3},
       pages={155\ndash 176},
      review={\MR{1924764}},
}

\bib{IusemPennanenSvaiter2003jour}{article}{
      author={Iusem, A.~N.},
      author={Pennanen, T.},
      author={Svaiter, B.~F.},
       title={Inexact variants of the proximal point algorithm without
  monotonicity},
        date={2003},
     journal={SIAM Journal on Optimization},
      volume={13},
       pages={1080\ndash 1097},
      review={\MR{2005918}},
}

\bibitem{IusemReemReich2017prep}
{A.~N. Iusem, D. Reem, and S. Reich}
\newblock Fixed points of {L}egendre-{F}enchel type transforms.
\newblock arXiv:1708.00464 [math.CA] ([v1]: 25 July 2017).

\bib{Kassay1985jour}{article}{
      author={Kassay, G.},
       title={The proximal points algorithm for reflexive {B}anach spaces},
        date={1985},
        ISSN={0252-1938},
     journal={Studia Univ. Babe\c s-Bolyai Math.},
      volume={30},
       pages={9\ndash 17},
      review={\MR{833677}},
}

\bib{LiLopezMartin-Marquez2009jour}{article}{
      author={Li, C.},
      author={L{\'o}pez, G.},
      author={Mart{\'{\i}}n-M{\'a}rquez, V.},
       title={Monotone vector fields and the proximal point algorithm on
  {H}adamard manifolds},
        date={2009},
        ISSN={0024-6107},
     journal={J. Lond. Math. Soc. },
      volume={79},
       pages={663\ndash 683},
         url={http://dx.doi.org/10.1112/jlms/jdn087},
      review={\MR{2506692}},
}

\bib{LiLopezMartina-MarquezWang2011jour}{article}{
      author={Li, C.},
      author={L{\'o}pez, G.},
      author={Mart{\'{\i}}n-M{\'a}rquez, V.},
      author={Wang, J.-H.},
       title={Resolvents of set-valued monotone vector fields in {H}adamard
  manifolds},
        date={2011},
        ISSN={1877-0533},
     journal={Set-Valued Var. Anal.},
      volume={19},
       pages={361\ndash 383},
         url={http://dx.doi.org/10.1007/s11228-010-0169-1},
      review={\MR{2824431}},
}

\bib{LindenstraussTzafriri1979book}{book}{
      author={Lindenstrauss, J.},
      author={Tzafriri, L.},
       title={Classical {B}anach spaces, {II}: {F}unction spaces},
      series={Ergebnisse der Mathematik und ihrer Grenzgebiete [Results in
  Mathematics and Related Areas]},
   publisher={Springer-Verlag},
     address={Berlin-New York},
        date={1979},
      review={\MR{540367}},
}

\bib{LotitoParenteSolodov2009jour}{article}{
      author={Lotito, P.~A.},
      author={Parente, L.~A.},
      author={Solodov, M.~V.},
       title={A class of variable metric decomposition methods for monotone
  variational inclusions},
        date={2009},
        ISSN={0944-6532},
     journal={J. Convex Anal.},
      volume={16},
       pages={857\ndash 880},
      review={\MR{2583900}},
}

\bib{Martin-MarquezReichSabach2012jour}{article}{
      author={Mart{\'{\i}}n-M{\'a}rquez, V.},
      author={Reich, S.},
      author={Sabach, S.},
       title={Right {B}regman nonexpansive operators in {B}anach spaces},
        date={2012},
        ISSN={0362-546X},
     journal={Nonlinear Anal.},
      volume={75},
       pages={5448\ndash 5465},
         url={http://dx.doi.org/10.1016/j.na.2012.04.048},
      review={\MR{2942929}},
}

\bib{McCarthy1967jour}{article}{
      author={McCarthy, C.~A.},
       title={{$c_{p}$}},
        date={1967},
        ISSN={0021-2172},
     journal={Israel J. Math.},
      volume={5},
       pages={249\ndash 271},
      review={\MR{0225140}},
}

\bib{Minty1962jour}{article}{
      author={Minty, G.~J.},
       title={Monotone (nonlinear) operators in {H}ilbert space},
        date={1962},
        ISSN={0012-7094},
     journal={Duke Math. J.},
      volume={29},
       pages={341\ndash 346},
      review={\MR{0169064}},
}

\bib{MonteiroSvaiter2012jour}{article}{
      author={Monteiro, R. D.~C.},
      author={Svaiter, B.~F.},
       title={Iteration-complexity of a {N}ewton proximal extragradient method
  for monotone variational inequalities and inclusion problems},
        date={2012},
     journal={SIAM Journal on Optimization},
      volume={22},
       pages={914\ndash 935},
      review={\MR{3023757}},
}

\bib{MonteiroSvaiter2013jour}{article}{
      author={Monteiro, R. D.~C.},
      author={Svaiter, B.~F.},
       title={An accelerated hybrid proximal extragradient method for convex
  optimization and its implications to second-order methods},
        date={2013},
        ISSN={1052-6234},
     journal={SIAM J. Optim.},
      volume={23},
       pages={1092\ndash 1125},
         url={http://dx.doi.org/10.1137/110833786},
      review={\MR{3063151}},
}

\bib{Moreau1962jour}{article}{
      author={Moreau, J.~J.},
       title={Fonctions convexes duales et points proximaux dans un espace
  hilbertien},
        date={1962},
     journal={C.R. Acad. Sci. Paris S\'er. I Math.},
      volume={255},
       pages={2897\ndash 2899},
      review={\MR{0144188}},
}

\bib{Moreau1965jour}{article}{
      author={Moreau, J.~J.},
       title={Proximit\'e et dualit\'e dans un espace hilbertien},
        date={1965},
        ISSN={0037-9484},
     journal={Bull. Soc. Math. France},
      volume={93},
       pages={273\ndash 299},
         url={http://www.numdam.org/item?id=BSMF_1965__93__273_0},
      review={\MR{0201952}},
}

\bib{Papa-QuirozOliveira2009jour}{article}{
      author={Papa~Quiroz, E.~A.},
      author={Oliveira, P.~R.},
       title={Proximal point methods for quasiconvex and convex functions with
  {B}regman distances on {H}adamard manifolds},
        date={2009},
        ISSN={0944-6532},
     journal={J. Convex Anal.},
      volume={16},
       pages={49\ndash 69},
      review={\MR{2531192}},
}

\bib{ParenteLotitoSolodov2008jour}{article}{
      author={Parente, L.~A.},
      author={Lotito, P.~A.},
      author={Solodov, M.~V.},
       title={A class of inexact variable metric proximal point algorithms},
        date={2008},
     journal={SIAM Journal on Optimization},
      volume={19},
       pages={240\ndash 260},
      review={\MR{2403031}},
}

\bib{Phelps1993book_prep}{book}{
      author={Phelps, R.~R.},
       title={Convex functions, monotone operators and differentiability},
     edition={2},
      series={Lecture Notes in Mathematics},
   publisher={Springer-Verlag, Berlin},
        date={1993},
      volume={1364},
        ISBN={3-540-56715-1},
        note={A closely related material can be found in ``Lectures on maximal
  monotone operators, arXiv:math$/$9302209 [math.FA] ([v1], 4 Feb 1993)''},
      review={\MR{1238715}},
}

\bib{PinchoverRubinstein2005book}{book}{
      author={Pinchover, Y.},
      author={Rubinstein, J.},
       title={An introduction to partial differential equations},
   publisher={Cambridge University Press},
     address={Cambridge},
        date={2005},
        ISBN={978-0-521-84886-2; 978-0-521-61323-X; 0-521-61323-1},
      review={\MR{2164768}},
}

\bib{Polyak2016inbook}{incollection}{
      author={Polyak, R.~A.},
       title={The {L}egendre transformation in modern optimization},
        date={2016},
   booktitle={Optimization and its applications in control and data sciences:
  In honor of {B}oris {T}. {P}olyak's 80th birthday, {S}pringer {I}nternational
  {P}ublishing, {C}ham},
      editor={Goldengorin, B.},
   publisher={Springer International Publishing},
     address={Cham},
       pages={437\ndash 507},
}

\bib{Reem2012incol}{incollection}{
      author={Reem, D.},
       title={The {B}regman distance without the {B}regman function {II}},
        date={2012},
   booktitle={Optimization {T}heory and {R}elated {T}opics, {C}ontemp. {M}ath.
  ({A}mer. {M}ath. {S}oc., {P}rovidence, {RI}), vol. 568},
      volume={568},
       pages={213\ndash 223},
         url={http://dx.doi.org/10.1090/conm/568/11284},
      review={\MR{2908461}},
}

\bib{ReemDe-Pierro2017jour}{article}{
      author={Reem, D.},
      author={De~Pierro, A.~R.},
       title={A new convergence analysis and perturbation resilience of some
  accelerated proximal forward-backward algorithms with errors},
        date={2017},
     journal={Inverse Problems},
      volume={33},
       pages={044001 (28pp)},
        note={arXiv:1508.05631 [math.OC] (2015) (current version: [v3], 29 Jun
  2016)},
}

\bib{ReichSabach2009jour}{article}{
      author={Reich, S.},
      author={Sabach, S.},
       title={A strong convergence theorem for a proximal-type algorithm in
  reflexive {B}anach spaces},
        date={2009},
        ISSN={1345-4773},
     journal={J. Nonlinear Convex Anal.},
      volume={10},
       pages={471\ndash 485},
      review={\MR{2588944}},
}

\bib{ReichSabach2010b-jour}{article}{
      author={Reich, S.},
      author={Sabach, S.},
       title={Two strong convergence theorems for a proximal method in
  reflexive {B}anach spaces},
        date={2010},
     journal={Numerical Functional Analysis and Optimization},
      volume={31},
       pages={22\ndash 44},
      review={\MR{2677243}},
}

\bib{ReichSabach2010jour}{article}{
      author={Reich, S.},
      author={Sabach, S.},
       title={Two strong convergence theorems for {B}regman strongly
  nonexpansive operators in reflexive {B}anach spaces},
        date={2010},
     journal={Nonlinear Analysis},
      volume={73},
       pages={122\ndash 135},
      review={\MR{2645837}},
}

\bib{ReichSabach2012col}{incollection}{
      author={Reich, S.},
      author={Sabach, S.},
       title={Three strong convergence theorems regarding iterative methods for
  solving equilibrium problems in reflexive {B}anach spaces},
        date={2012},
   booktitle={Optimization {T}heory and {R}elated {T}opics, {C}ontemp. {M}ath.  ({A}mer. {M}ath. {S}oc., {P}rovidence, {RI}), vol. 568},
      series={Contemp. Math.},
      volume={568},
   publisher={Amer. Math. Soc., Providence, RI},
       pages={225\ndash 240},
         url={http://dx.doi.org/10.1090/conm/568/11285},
      review={\MR{2908462}},
}

\bib{Rockafellar1970book}{book}{
      author={Rockafellar, R.~T.},
       title={Convex {A}nalysis},
      series={Princeton Mathematical Series, No. 28},
   publisher={Princeton University Press},
     address={Princeton, NJ, USA},
        date={1970},
      review={\MR{0274683}},
}

\bib{Rockafellar1970jour}{article}{
      author={Rockafellar, R.~T.},
       title={On the maximal monotonicity of subdifferential mappings},
        date={1970},
        ISSN={0030-8730},
     journal={Pacific J. Math.},
      volume={33},
       pages={209\ndash 216},
      review={\MR{0262827}},
}

\bib{Rockafellar1976jour}{article}{
      author={Rockafellar, R.~T.},
       title={Monotone operators and the proximal point algorithm},
        date={1976},
     journal={SIAM Journal on Control and Optimization},
      volume={14},
       pages={877\ndash 898},
      review={\MR{0410483}},
}

\bib{RockafellarWets1998book}{book}{
      author={Rockafellar, R.~T.},
      author={Wets, R. J.-B.},
       title={Variational analysis},
      series={Grundlehren der Mathematischen Wissenschaften [Fundamental
  Principles of Mathematical Sciences]},
   publisher={Springer-Verlag, Berlin},
        date={1998},
      volume={317},
        ISBN={3-540-62772-3},
         url={http://dx.doi.org/10.1007/978-3-642-02431-3},
      review={\MR{1491362}},
}

\bib{daSilvaSilvaEcksteinHumes2001jour}{article}{
      author={Silva, P. J.~S.},
      author={Eckstein, J.},
      author={Humes, C., Jr.},
       title={Rescaling and stepsize selection in proximal methods using
  separable generalized distances},
        date={2001},
        ISSN={1052-6234},
     journal={SIAM J. Optim.},
      volume={12},
       pages={238\ndash 261},
         url={http://dx.doi.org/10.1137/S1052623499365784},
      review={\MR{1870593}},
}

\bib{Simons2008book}{book}{
      author={Simons, S.},
       title={From {H}ahn-{B}anach to {M}onotonicity},
     edition={2},
      series={Lecture Notes in Mathematics},
   publisher={Springer, New York},
        date={2008},
      volume={1693},
        ISBN={978-1-4020-6918-5},
      review={\MR{2386931}},
}

\bib{Solodov2004jour}{article}{
      author={Solodov, M.~V.},
       title={A class of decomposition methods for convex optimization and
  monotone variational inclusions via the hybrid inexact proximal point
  framework},
        date={2004},
        ISSN={1055-6788},
     journal={Optim. Methods Softw.},
      volume={19},
       pages={557\ndash 575},
         url={http://dx.doi.org/10.1080/1055678042000218957},
      review={\MR{2095352}},
}

\bib{SolodovSvaiter1999-2jour}{article}{
      author={Solodov, M.~V.},
      author={Svaiter, B.~F.},
       title={A hybrid approximate extragradient-proximal point algorithm using
  the enlargement of a maximal monotone operator},
        date={1999},
        ISSN={0927-6947},
     journal={Set-Valued Anal.},
      volume={7},
       pages={323\ndash 345},
         url={http://dx.doi.org/10.1023/A:1008777829180},
      review={\MR{1756912}},
}

\bib{SolodovSvaiter1999-1jour}{article}{
      author={Solodov, M.~V.},
      author={Svaiter, B.~F.},
       title={A hybrid projection-proximal point algorithm},
        date={1999},
        ISSN={0944-6532},
     journal={J. Convex Anal.},
      volume={6},
       pages={59\ndash 70},
      review={\MR{1713951}},
}

\bib{SolodovSvaiter2000incol}{incollection}{
      author={Solodov, M.~V.},
      author={Svaiter, B.~F.},
       title={A comparison of rates of convergence of two inexact proximal
  point algorithms},
        date={2000},
   booktitle={Nonlinear optimization and related topics ({E}rice, 1998), Appl. Optim., Kluwer Acad. Publ., Dordrecht},
      series={Appl. Optim.},
      volume={36},
   publisher={Kluwer Acad. Publ., Dordrecht},
       pages={415\ndash 427},
         url={http://dx.doi.org/10.1007/978-1-4757-3226-9_22},
      review={\MR{1777932}},
}

\bib{SolodovSvaiter2000b-jour}{article}{
      author={Solodov, M.~V.},
      author={Svaiter, B.~F.},
       title={Error bounds for proximal point subproblems and associated
  inexact proximal point algorithms},
        date={2000},
        ISSN={0025-5610},
     journal={Math. Program. Ser. B},
      volume={88},
       pages={371\ndash 389},
         url={http://dx.doi.org/10.1007/s101070050022},
        note={Error bounds in mathematical programming (Kowloon, 1998)},
      review={\MR{1783979}},
}

\bib{SolodovSvaiter2000jour}{article}{
      author={Solodov, M.~V.},
      author={Svaiter, B.~F.},
       title={An inexact hybrid generalized proximal point algorithm and some
  new results in the theory of {B}regman functions},
        date={2000},
     journal={Math. Oper. Res.},
      volume={51},
       pages={214\ndash 230},
      review={\MR{1853949}},
}

\bib{SolodovSvaiter2001jour}{article}{
      author={Solodov, M.~V.},
      author={Svaiter, B.~F.},
       title={A unified framework for some inexact proximal point algorithms},
        date={2001},
     journal={Numerical Functional Analysis and Optimization},
      volume={22},
       pages={1013\ndash 1035},
      review={\MR{1871872}},
}

\bib{SouzaOliveira2015jour}{article}{
      author={Souza, J. C.~O.},
      author={Oliveira, P.~R.},
       title={A proximal point algorithm for {DC} fuctions on {H}adamard
  manifolds},
        date={2015},
        ISSN={0925-5001},
     journal={J. Global Optim.},
      volume={63},
       pages={797\ndash 810},
         url={http://dx.doi.org/10.1007/s10898-015-0282-7},
      review={\MR{3425400}},
}

\bib{TangHuang2014jour}{article}{
      author={Tang, G.-j.},
      author={Huang, N.-j.},
       title={Rate of convergence for proximal point algorithms on {H}adamard
  manifolds},
        date={2014},
        ISSN={0167-6377},
     journal={Oper. Res. Lett.},
      volume={42},
       pages={383\ndash 387},
         url={http://dx.doi.org/10.1016/j.orl.2014.06.009},
      review={\MR{3260157}},
}

\bib{Teboulle1992jour}{article}{
      author={Teboulle, M.},
       title={Entropic proximal mappings with applications to nonlinear
  programming},
        date={1992},
     journal={Mathematics of Operations research},
      volume={17},
       pages={670\ndash 690},
      review={\MR{1177730}},
}

\bib{Tomczak-Jaegermann1974jour}{article}{
      author={Tomczak-Jaegermann, N.},
       title={The moduli of smoothness and convexity and the {R}ademacher
  averages of trace classes {$S_{p}(1\leq p<\infty )$}},
        date={1974},
        ISSN={0039-3223},
     journal={Studia Math.},
      volume={50},
       pages={163\ndash 182},
      review={\MR{0355667}},
}

\bib{VanTiel1984book}{book}{
      author={van Tiel, J.},
       title={{C}onvex {A}nalysis: {A}n {I}ntroductory {T}ext},
   publisher={John Wiley and Sons},
     address={Universities Press, Belfast, Northern Ireland},
        date={1984},
      review={\MR{743904}},
}

\bib{Walter1998book}{book}{
      author={Walter, W.},
       title={Ordinary differential equations},
      series={Graduate Texts in Mathematics, Readings in Mathematics},
   publisher={Springer-Verlag},
     address={New York},
        date={1998},
      volume={182},
        ISBN={0-387-98459-3},
        note={Translated by R. Thompson, based on and extends the sixth German
  edition (from 1996)},
      review={\MR{1629775}},
}

\bib{WangLiLopezYao2015jour}{article}{
      author={Wang, J.},
      author={Li, C.},
      author={Lopez, G.},
      author={Yao, J.-C.},
       title={Convergence analysis of inexact proximal point algorithms on
  {H}adamard manifolds},
        date={2015},
        ISSN={0925-5001},
     journal={J. Global Optim.},
      volume={61},
       pages={553\ndash 573},
         url={http://dx.doi.org/10.1007/s10898-014-0182-2},
      review={\MR{3313184}},
}

\bib{XiaHuang2011jour}{article}{
      author={Xia, F.~Q.},
      author={Huang, N.~J.},
       title={An inexact hybrid projection-proximal point algorithm for solving
  generalized mixed variational inequalities},
        date={2011},
        ISSN={0898-1221},
     journal={Computers \& Mathematics with Applications},
      volume={62},
       pages={4596\ndash 4604},
  url={http://www.sciencedirect.com/science/article/pii/S0898122111009060},
      review={\MR{2855604}},
}

\bib{XuRoach1991jour}{article}{
      author={Xu, Z.~B.},
      author={Roach, G.~F.},
       title={Characteristic inequalities of uniformly convex and uniformly
  smooth {B}anach spaces},
        date={1991},
        ISSN={0022-247X},
     journal={J. Math. Anal. Appl.},
      volume={157},
       pages={189\ndash 210},
         url={http://dx.doi.org/10.1016/0022-247X(91)90144-O},
      review={\MR{1109451}},
}

\bib{YinOsherGoldfarbDarbon2008jour}{article}{
      author={Yin, W.},
      author={Osher, S.},
      author={Goldfarb, D.},
      author={Darbon, J.},
       title={Bregman iterative algorithms for $\ell_1$-minimization with
  applications to compressed sensing},
        date={2008},
     journal={SIAM J. Imaging Sci.},
      volume={1},
       pages={143\ndash 168},
      review={\MR{2475828}},
}

\bib{Zalinescu2002book}{book}{
      author={Z{\u a}linescu, C.},
       title={Convex analysis in general vector spaces},
   publisher={World Scientific Publishing},
     address={River Edge, NJ, USA},
        date={2002},
      review={\MR{1921556}},
}

\bib{Zaslavski2011-2jour}{article}{
      author={Zaslavski, A.~J.},
       title={Inexact proximal point methods in metric spaces},
        date={2011},
        ISSN={1877-0533},
     journal={Set-Valued Var. Anal.},
      volume={19},
       pages={589\ndash 608},
         url={http://dx.doi.org/10.1007/s11228-011-0185-9},
      review={\MR{2836712}},
}

\end{biblist}
\end{bibdiv}

\end{document}